\newcolumntype{Y}{>{\centering\arraybackslash}X}
\newcommand{\comments}[1]{\textcolor{green}{\textit{#1}}}
\newcommand{\comments}[1]{}
\newcommand{\removed}[1]{}
\def\min{\mathop{\rm min}}
\def\max{\mathop{\rm max}}
\def\sup{\mathop{\rm sup}}
\def\inf{\mathop{\rm inf}}
\newcommand{\R}{\mathcal{R}}
\renewcommand{\Re}{\mathbb{R}}
\newcommand{\bars}[1]{\overline{\overline{#1}}}
\begin{document}


\RUNAUTHOR{Li}

\RUNTITLE{Inverse Optimization of Convex Risk Functions}

\TITLE{Inverse Optimization of Convex Risk Functions}

\ARTICLEAUTHORS{%
\AUTHOR{Jonathan Yu-Meng Li}
\AFF{Telfer School of Management, University of Ottawa, Ottawa, Ontario K1N6N5, Canada,
\EMAIL{Jonathan.Li@telfer.uOttawa.ca}} 
} 

\ABSTRACT{%
The theory of convex risk functions has now been well established as the basis for identifying the families of risk functions that should be used in risk-averse optimization problems. Despite its theoretical appeal, the implementation of a convex risk function remains difficult, as there is little guidance regarding how a convex risk function should be chosen so that it also well represents a decision maker's subjective risk preference. In this paper, we address this issue through the lens of inverse optimization. Specifically, given solution data from some (forward) risk-averse optimization problem (i.e., a risk minimization problem with known constraints), we develop an inverse optimization framework that generates a risk function that renders the solutions optimal for the forward problem. The framework incorporates the well-known properties of convex risk functions---namely, monotonicity, convexity, translation invariance, and law invariance---as the general information about candidate risk functions, as well as feedback from individuals---which include an initial estimate of the risk function and pairwise comparisons among random losses---as the more specific information. Our framework is particularly novel in that unlike classical inverse optimization, it does not require making any parametric assumption about the risk function (i.e., it is non-parametric). We show how the resulting inverse optimization problems can be reformulated as convex programs and are polynomially solvable if the corresponding forward problems are polynomially solvable. We illustrate the imputed risk functions in a portfolio selection problem and demonstrate their practical value using real-life data.

}%

\maketitle

\section{Introduction} 
The theory of convex risk functions, established since the work of \cite{artzner:coherentRM} and later generalized by \cite{follmer02:cvxRiskMeas}, \cite{Ruszczynski:2006aa}, and others, has played a central role in the development of modern risk-averse optimization models. The work of \cite{Ruszczynski:2006aa} in particular brings to light the intimate relationship between convex risk functions and optimization theory and provides necessary tools for analyzing the tractability of risk-averse optimization problems involving convex risk functions. The unified scheme that \cite{Ruszczynski:2006aa} provided through convex analysis also explains the success of several convex risk functions that have now been widely applied for risk minimization, among which the most well known is arguably Conditional Value-at-Risk (CVaR) (\cite{Rockafellar:optimizationCVAR}). 

It was however not for the purpose of optimization (i.e., risk minimization), at least not solely, that the theory was first established. Rather, the motivation lay in the need for alternative measures of risk that could better characterize how individuals perceive risk. For example, the property of convexity, which led to the term ``convex" risk function, was postulated by the theory as an essential and universal characteristic of how risk-averse individuals would perceive risk, namely that diversification should not increase risk. The industry-standard measure of risk, Value-at-Risk (VaR), unfortunately does not satisfy the property of convexity, whereas CVaR, as its counterpart,  which does satisfy convexity, has become a popular theory-supported alternative. Other properties of the theory that have also been widely referenced in justifying the choice of a measure for risk include monotonicity and translation invariance (\cite{follmer02:cvxRiskMeas}), law invariance (\cite{kusuoka:licrm}), positive homogeneity (\cite{artzner:coherentRM}), and comonotonicity (\cite{Acerbi:smmrcrsra}), among others. Each of these properties represents a certain well-grounded rationale for how risk might be perceived over random variables. Some are applicable fairly generally (e.g., monotonicity and law invariance), whereas some others can be domain dependent (e.g., positive homogeneity and comonotonicity).

However, despite the general attractive features of convex risk functions from the point of view of both optimization and risk modelling, very little guidance has been provided to date regarding how to choose a convex risk function that can also well represent a decision maker's subjective perception of risk. In current practice, the choice of a convex risk function is mostly ad hoc and involves very little knowledge of decision makers' true risk preferences. This raises the question of how ones' risk preferences may be observed and how to generate a convex risk function that complies with the observed preferences. \cite{Delage:2015aa} appear to be the first to address this question, by proposing a means to construct a convex risk function from the assessments provided by the decision maker, who compares pairs of risky random losses. Their work is closely related to the scheme of preference (or utility) elicitation (see, e.g., \cite{RT:2014aa}), where queries are considered for extracting users' preferences in establishing their utility functions. One of the main challenges facing this line of inquiry is that in reality decision makers may only be able to provide limited responses because of potential time and cognitive constraints, and thus the elicited preference information is often incomplete. This situation is formulated in \cite{Delage:2015aa}
as a preference robust optimization problem where a worst-case risk
measure is sought that complies with a finite number of pairwise preference relations elicited from the decision maker. Similar ideas can be found also in the context of expected utility theory. \cite{armbruster:dmupii} and \cite{jianhu:rdmosrt}
consider the formulation of a worst-case expected utility function based on limited preference information, whereas \cite{Boutilier06:couemdc} considers a worst-case regret criterion over utility functions. 

In this paper, we attempt to provide an alternative perspective on
the search of a convex risk function that takes into account decision makers'
true risk preferences, namely through the lens of inverse optimization. 
The motivation is that in many current applications, it becomes possible to have
access to the record of the decisions made by individuals, and the past decisions, if optimal, provide useful preference information. Such kinds of preference information may be viewed as a special form of pairwise preference relations, where the random variable chosen according to a made decision is considered preferable to the random variables that could be chosen by alternative decisions. In the case that the alternative decisions are finite, the pairwise preference relations are also finite, which can then be handled by existing frameworks such as \cite{Delage:2015aa}. However, this work emphasizes the case where the alternative decisions may be described through a convex set, which leads to infinitely many pairwise relations that existing frameworks cannot handle.
Moreover, we also recognize that, in practice, even though individuals may perceive risk differently, they often start by agreeing upon some seemingly reasonable risk measure. They then adjust their measurement of risk after receiving more precise preference information. One such example is that many investors tend to follow the principle of {\it safety first} (\cite{Roy:1952aa}), which states that the top concern of an investor is to avoid a possible catastrophic event. They would thus naturally start by choosing a downside risk measure that they feel safe enough to apply (e.g., \cite{Fabozzi:2015aa}).  For example, this could be the CVaR risk measure that has now been widely applied in various areas. Although staying aligned with the downside risk measure is desirable, decisions made by individuals are often inconsistent with (e.g., more aggressive than) what the risk measure prescribes. In this paper, we refer to such a risk measure as a reference risk function, which should be followed closely before more precise preference information can be revealed.  A natural framework to address the above issues is the setup of inverse optimization; namely, given the solutions for some forward problem (i.e., a decision optimization problem with known constraints), the inverse problem seeks a risk function that renders the solutions optimal for the forward problem by minimally deviating from the reference risk function. Our formulation of the inverse problem will allow for 
incorporating preference information in both the forms of pairwise relations and ``most preferable" decisions in convex sets of alternatives, and also the important properties of convex risk functions, namely the monotonicity, convexity, translation invariance, and law invariance. We show how the resulting inverse optimization problems can be tractably analyzed by applying conjugate duality theory (\cite{Rockafellar:1974aa}).

To the best of our knowledge, little has been discussed in the literature
about inverse optimization for convex risk functions. \cite{Bertsimas:2012aa} 
considered inverse optimization for a financial application involving the use of coherent risk measures, but they assumed that the measure is given a priori and focused instead on the
estimation of parameters characterizing random returns and risk budgets.
\cite{Iyengar:2005aa} also applied inverse optimization
to estimate parameters of expected returns in a financial problem.
More generally, inverse optimization methods have been developed for
linear programs (\cite{Ahuja:2001aa}, 
\cite{Dempe:2006aa}), conic programs (\cite{Iyengar:2005aa}),
and convex separable programs (\cite{Zhang:2010aa})
for estimating the parameters that characterize the programs. Early
works also include \cite{Burton:1992aa},  \cite{Zhang:1996aa}, and  \cite{Hochbaum:2003aa}, who focused on network and combinatorial optimization problems (see 
\cite{Heuberger:2004aa} for a survey), whereas more recent works
include  \cite{Schaefer:2009aa} on integer programs,  \cite{Chan:2014aa} on multi-objective programs, \cite{Ghate:2015aa}
on countably infinite linear programs, \cite{ChanLeeTe2018} on the issue of sub-optimality of an observed solution, and \cite{Keshavarz:2011aa}, \cite{Bertsimas:2014aa},  \cite{Aswani:2015aa}, and  \cite{Mohajerin-Esfahani:2015aa} on various issues related to the observations of multiple responses from an agent solving a parametric optimization problem.

In much of the literature referenced above, the problems are structured in a
parametric fashion, and the goal is to estimate the parameters that
characterize the forward problems from observed decisions. However, the
parametric assumption is too limiting for the purpose of identifying
a decision maker's true risk function because it restricts the class of functions
to which the true risk function may belong. It also provides no guarantee
regarding the convergence to the true risk function even if some elicited
information, such as pairwise preference relations, is available. In contrast,
the inverse optimization formulations presented in this paper are
parameter-free and search over the entire space of convex risk functions
for the true risk function. With the collection of more elicited information, their
solutions can converge to the true
risk function, if it is a convex risk function. In this sense, our work broadens the scope
of inverse optimization and opens the door for nonparametric approaches
to function estimation through inverse optimization. However, we should note that \cite{Bertsimas:2014aa} provide a kernel method for inverse optimization, which can also be considered non-parametric. While their method focuses on estimating a function characterizing all the subgradients associated with an unknown function, the method developed in this paper addresses directly the estimation of the unknown function. More detailed discussions along this line are provided later in Section \ref{iconv}. We should also mention that the inverse problem considered in this paper generally falls into the class of inverse problems that focus on estimating the objective function of an optimization problem. Although this class of inverse problems is known to be tractable in a parametric setting, the class of inverse problems that seek to impute parameters defining the feasible region of an optimization problem is generally much less tractable (see, e.g., \cite{Birge:2017aa}). This paper's focus on the former may help explain why tractably solving the inverse problem, even in a non-parametric setting, is a reasonable hope.

Our formulation of the inverse problem does require, albeit implicitly, an assumption that may affect the scope of application of our inverse models, and we should point it out here. Namely, our formulation assumes that in the case where decisions are based on some probabilistic views (or beliefs) of the decision makers, these views (i.e., the probabilistic assessments of random outcomes), either stay unchanged over time or, if not, can always be disclosed together with the decisions made. This may not always be possible because some decision makers may only have a vague sense of probability driven by their intuition and find it cognitively too demanding to articulate how their views change over time. As extensively discussed in this paper, while it is possible to address the case where decision makers take no view or a constant (personal) view on the likelihood of outcomes,  namely by applying non-law-invariant risk functions, it remains an open question as to how a (law-invariant) risk function can possibly be learned from past decisions when the decision makers did hold different probabilistic views over time but were not able to disclose them. This question is important because the scenario, as described, could happen in practice; and from a statistical point of view, the use of law-invariant risk function can be necessary. As a preliminary step, we have conducted some experiments, which we describe in this paper, to examine first how the performances of the solutions optimized based on imputed risk functions may be affected by the misspecification of probability distributions in our inverse models (i.e., distributions applied in the models are inconsistent with the actual views of the decision maker). We observe that, despite the misspecification, the performances can still be noticeably improved towards the optimal performances as more decision data are incorporated into the inverse models. However, it still requires the full development of a formal and rigorous theory to satisfactorily answer the question, and we leave this for future study. 

One natural application of our inverse optimization framework is to identify a risk function that captures the risk preference of an investor from his/her past investment decisions (see, e.g.,  \cite{Delage:2015aa}). While this is the application that we primarily focus on in this paper, we should point out that our framework can also be naturally applied to other settings that involve budget allocation decisions under uncertainty. For instance, \cite{Haskell:2018aa} consider a setting of homeland security and the problem of learning the risk preference of the Department of Homeland Security (DHS), which makes decisions to allocate budget across a number of cities so as to protect them from potential terrorist attacks. While covering these other applications in depth may go beyond the scope of this paper, the framework 
established in this paper shall provide the basis for further exploring these other applications.

We briefly summarize our main contributions below:
\begin{enumerate}
\item We develop for the first time an inverse-optimization framework for
convex risk functions that generates a risk function incorporating
the following information: 1) the properties of monotonicity, convexity, translation invariance, and law invariance promoted in the theory of convex risk functions, 2) observable optimal solutions from forward problems, 3) a reference risk function, and 4) elicited pairwise preference relations.

\item We formulate the inverse optimization problem in a non-parametric
fashion and show that for a large number of cases, the computational tractability of the inverse problem is largely determined by the forward problem: namely that the former is polynomially solvable if the latter is so.

\item Methodologically speaking, we show that to solve the inverse problem it suffices to search over a particular class of risk functions for an optimal solution. This class of risk functions is representable in terms of the random variables resulting from the observed decisions and a fixed set of parameters. Based on this representation, the inverse problem reduces to determining the values of the parameters in the representation, and these values correspond to the amounts of risk estimated for the chosen random variables. The number of parameters needed in the representation is set by the number of observed decisions. This offers an intuitive and computationally tractable interpretation of the (non-parametric) inverse problem. 

\item We demonstrate the application of our framework in a portfolio selection
problem and provide computational evidence that the imputed risk functions utilize well 
the preference information contained in observable
solutions and a reference risk function. This leads to a solution that
can be well justified in terms of both its performance evaluated based
on the true risk function and the reference risk function. We also demonstrate how quickly the performances of the solutions optimized based on the imputed risk functions can converge to the performances of the solutions optimized based on the true risk function, as the number of observed decisions increases. 
\end{enumerate}

\section{Forward and Inverse Problem of Risk Minimization}
We begin by formalizing the forward problem of risk minimization and characterizing the problem using the theory of convex risk functions. We then proceed to the formulation of the inverse problem. 

\subsection{Forward problem of risk minimization} \label{sec21}
 Our general setup of the forward problem follows closely the setup in the literature of choice over acts (i.e., random variables $Z: \Omega \rightarrow \Re$) (\cite{Savage:1954aa}). In this setup, it is assumed that a decision maker's preference over random variables can be specified and that the forward problem seeks the most preferable random variables. In the special case where a probability measure $\mathbb{P}$ can be identified over the sigma-algebra $\Sigma$ of sample space $\Omega$, the preference can be alternatively defined over the distributions, denoted by $F_Z$, of the random variables. This special case is considered in the literature of choice over lotteries (i.e., distributions) (\cite{Von-Neumann:1944aa}). 

We will first continue formalizing the forward problem under the general setting, and the special case will follow naturally as we proceed. Without loss of generality, we assume that any random variable $Z$ represents some form of loss, by which we mean that it has the interpretation that for any $\omega \in \Omega$, the larger the value of $Z(\omega)$ is, the worse it is. We denote by $\succeq$ a system of preference relations that is complete, transitive, and continuous \footnote{These are the conditions required to ensure that there exists a function $\rho$ that captures the system of preference relations (\cite{debreu:rponf}).}, where $Z_1 \succeq Z_2$ denotes that $Z_1$ is preferred to $Z_2$. A risk function $\rho$ is a numerical representation that captures the preference relation $\succeq$ in terms of the riskiness of random losses  (i.e., a random loss $Z$ is preferable if it is perceived less risky). In this paper, we will focus on the case where a risk function $\rho$ is defined over random losses based on a sample space with finitely many outcomes $\Omega:=\{\omega_{i}\}_{i=1}^{M}$. In this setting, any random loss $Z$ can be represented also by a vector $\vec{Z} \in \Re^{|\Omega|}$, where $(\vec{Z})_i = Z(\omega_i)$, and the random loss resulting from a decision $x \in \Re^{n}$ can be written by $\vec{Z}(x) : = (Z(x,\omega_1),...,Z(x,\omega_M))^\top \in\Re^{|\Omega|}$. If a random loss $Z_2$ is perceived at least as risky as $Z_1$  (i.e., $Z_1 \succeq Z_2$), the risk function $\rho:\Re^{|\Omega|}\rightarrow \Re$
should satisfy $\rho(\vec{Z}_{1})\leq\rho(\vec{Z}_{2})$. Accordingly, a solution $x^*$ is optimal if and only if it satisfies
$\rho(\vec{Z}(x^*))\leq\rho(\vec{Z}(x)$), $\forall x\in{\cal X}$,
and a risk minimization problem can be formulated as
\begin{equation}
\min_{x\in{\cal X}}\rho(\vec{Z}(x)). \label{forward}
\end{equation}
Throughout this paper, we assume that the function $\vec{Z}(x)$ is convex in $x$ (i.e., $Z(x,\omega_i)$ is convex in $x$ for all $\omega_i \in \Omega$), and the feasible set ${\cal X}\subseteq\Re^{n}$ is a convex set.

It is hypothesized in the theory of convex risk functions (\cite{follmer02:cvxRiskMeas}) that any risk function $\rho$ that represents a reasonable or ``rational" 
preference system would satisfy certain axioms. The most widely known ones are the following three:
\begin{enumerate} 
\item (Monotonicity) $\rho(\vec{Z}_{1})\leq\rho(\vec{Z}_{2})$ for any $Z_{1}(\omega)\leq Z_{2}(\omega),\;\forall \omega \in \Omega$, 
\item (Convexity) $\rho(\lambda\vec{Z}_{1}+(1-\lambda)\vec{Z}_{2})\leq \lambda\rho(\vec{Z}_{1})+(1-\lambda)\rho(\vec{Z}_{2})$, where $0\leq\lambda\leq1$, and
\item (Translation Invariance) $\rho(\vec{Z}+c)=\rho(\vec{Z})+c$.
\end{enumerate}

The first axiom, monotonicity, captures the fact that any reasonable preference system would never prefer a random loss that is known to have higher loss for any possible outcome (i.e., any $Z_1$, $Z_2$ such that $Z_1(\omega)\leq Z_2(\omega)$, $\forall \omega \in \Omega$ must lead to $Z_1 \succeq Z_2$). The axiom of convexity describes the diversification preference, namely that any convex combination (diversification) $\lambda Z_1 + (1-\lambda) Z_2$ must be (at least equally) preferable to non-diversified counterparts (i.e., $Z_1$ or $Z_2$). Lastly, translation invariance is necessary when a monetary interpretation of risk is required, which is the case in finance where a deterministic amount such as cash can always be used to offset the risk by the same amount. The corresponding preference system is insensitive to any constant amount $c$ added to or subtracted from all random losses (i.e., $Z_1 \succeq Z_2 \Rightarrow Z_1+c \succeq Z_2+c$). Based on these three axioms, we define the following class of functions that capture risk-averse preferences.
\vspace{5pt}
\begin{definition} ({\it Risk-averse functions}) Let ${\cal R}$ denote the set of functions that 
satisfy axioms (1)--(3) and $\rho(\vec{0})=0$.
\end{definition}
\vspace{5pt}
The condition $\rho(\vec{0})=0$ is imposed for the purpose of normalization so that risk estimated based on different risk functions is comparable. We now also formalize the special case where a probability measure $\mathbb{P}$ can be defined over $(\Omega, \Sigma)$. This naturally leads to the consideration of the following axiom for a risk function (see, e.g., \cite{kusuoka:licrm}). 
\begin{enumerate} \setcounter{enumi}{3}
\item (Law Invariance) $\rho(\vec{Z}_{1})=\rho(\vec{Z}_{2})$, for any $Z_{1}\sim_{\mathbb{P}}Z_{2}$ (distributionally equivalent).
\end{enumerate}
The above axiom immediately implies that the risk function in this case is essentially a function of distributions. We define the following class of functions when the distributions for all random variables are available.
\vspace{5pt}
\begin{definition} ({\it Law-invariant risk-averse functions}) 
Let ${\cal R}_F \subset {\cal R}$ denote the set of risk-averse functions that are law invariant. Without loss of generality, we can equivalently write $\rho(\vec{Z})$ as $\rho(F_Z)$  (i.e., a function of distributions $F_Z$).
\end{definition}
\vspace{5pt}

Depending on the decision maker's knowledge about the distribution, one may decide which class of risk functions (i.e., risk-averse versus law-invariant risk-averse) is more appropriate to assume in defining the forward problem. In particular, we provide the following examples, which cover three possible cases: the case with no distribution, the case with ambiguous distributions, and the case with a specific distribution. The first two cases can be treated as special cases of our general setup (i.e., preference over acts), whereas the third case invokes the property of law invariance.

\begin{example} (General case) \label{exx1}
If a decision maker's choices are made by knowing only that there are $|\Omega|$ possible outcomes for the uncertain losses, we may assume that the individual solves a forward problem based on a certain risk-averse function $\rho \in {\cal R}$.
\end{example} 

\begin{example} (Distributional ambiguity) \label{exx2}
If a decision maker actually has in mind a certain distribution-based convex risk measure $\hat{\rho}(\cdot ; q)$, where $q$ denotes the distribution, for evaluating risk but is concerned about the estimation error associated with an empirical distribution $\hat{p}$, we may assume that she is ambiguity-averse and that her choices follow a distributionally robust version of the risk measure. Namely, it can take the following form with uncertain probability $q$:
$$ \rho^{\uparrow}(Z) = \sup_{q} \left\{ \hat{\rho}(Z; q)\;\middle |\; D(q, \hat{p}) \leq d,\; \vec{1}^\top q = 1, \; q \geq 0 \right\}, $$
where $D$ measures the difference between two distributions and is usually defined based on some $\phi$-divergence function (see \cite{Ben-Tal:2013aa} for more details). It is not hard to confirm that in this case $\rho^{\uparrow} \in {\cal R}$. Of course, in reality we do not know the structure of $\hat{\rho}$ and $D$ and may assume only that the true risk function is a risk-averse function $\rho \in {\cal R}$. 
\end{example}

\begin{example} (Known Distribution) \label{exx3}
Suppose that a decision maker can identify which distribution $q$ to use in $\hat{\rho}(\cdot ; q)$ and that the information about the distribution is available. In this case, we may assume that the individual solves a certain law-invariant risk-averse function $\rho \in {\cal R}_F$ based on the distribution $q$. 
\end{example}

\begin{remark} \label{remm1}
One scenario that can also be of practical interest but is not covered above is the case where a decision maker solves some distribution-based risk measure $\hat{\rho}(\cdot ; q)$ but the individual does not reveal which distribution $q$ is used. While it remains possible to address this case if $q$ stays unchanged over time, namely by assuming that the true risk function is a risk-averse function $\rho \in {\cal R}$ because $\hat{\rho}(\cdot ; q) \in {\cal R}$ for a fixed $q$, it becomes less clear how to address the case when the distribution $q$ may differ from one time point to another. In the latter, the observed decisions can actually appear inconsistent with the assumption $\rho \in {\cal R}$. 
This can be easily seen by considering for instance the case where the decision maker's true risk function is the simple expected value function. 
We may observe between two random variables $X_i$ and $X_j$ that at one point $\mathbb{E}_{q_1}[X_i] > \mathbb{E}_{q_1}[{X_j}]$ and at the other point  $\mathbb{E}_{q_2}[X_i] < \mathbb{E}_{q_2}[{X_j}]$ based on two different distributions $q_1$ and $q_2$. Clearly, there exists no $\rho \in {\cal R}$ that can capture such preferences. For this reason, we find it necessary to assume throughout this paper that in the case where the decision makers applied different distributions, these distributions can always be disclosed together with the decisions made. Later in the numerical section, Section \ref{52}, we will revisit this assumption and address the case where the disclosed distributions may not be fully accurate.
\end{remark}


\subsection{Inverse problem of risk minimization}
In the inverse problem, the risk function $\rho$ is unknown, but one has access to decisions made according to forward problems as defined in the previous section.
The goal is to generate a risk function $\rho$ that renders the observed decisions as optimal as possible in the forward problem. 
Specifically, let $(x^t, \vec{Z}^t(\cdot), {\cal X}^t)$ represent each observation, which denotes that $x^t$ was made with respect to the random vector $\vec{Z}^t(\cdot)$ and the feasible region ${\cal X}^t$. We can write down the following optimality condition that characterizes the risk function $\rho$ through the observed decisions. 
\vspace{5pt}
\begin{description}
\item[Optimality Condition:] Given a list of observations $\left\{(x^t, \vec{Z}^t(\cdot), {\cal X}^t)\right\}_{t \in {\cal T}}$, where $|{\cal T}|<\infty$, the set of risk functions that render the decisions optimal admits 
$$ {\cal R}_{inv}  := \left\{ \rho \;\;\middle |\;\; \rho(\vec{Z}^{t}(x^t)) \leq \rho(\vec{Z}^{t}(x)),\; \forall x \in {\cal X}^{t},\; t \in {\cal T}\right\}. $$
\end{description}

Note that the risk function $\rho$ does not depend on $t$  (i.e., the decision maker's risk preference is assumed to stay constant when the past decisions were made). In the case where $|{\cal T}|$ is small, the above set may not be sufficient to build a meaningful inverse problem, since it can contain some degenerate form of functions. It would thus be necessary to assume that some ``prior" knowledge about the risk function can be acquired. The most direct way to acquire such knowledge is through preference elicitation (e.g., \cite{RT:2014aa}) where the decision maker would be asked to make comparisons among a selective list of random variables.  We can also borrow the concept of reference solution from the literature of inverse optimization, which stands for a solution that can be used as a reference while searching for an alternative better solution. 
\vspace{5pt}
\begin{description}
\item[Elicited Preference Relations:] Given a list of pairs of random losses $\left\{(L_k,U_k)\right\}_{k\in {\cal K}}$, where $|{\cal K}|<\infty$, that satisfy $L_k \succeq U_k$ for $k \in {\cal K}$, we define the set
$$ {\cal R}_{el}(\{(L_k,U_k)\}_{k\in {\cal K}}) := \left\{ \rho \;\;\middle |\;\; \rho(\vec{L}_k) \leq \rho(\vec{U}_k),\;\forall k\in {\cal K}\right\}.$$
\end{description}
\vspace{5pt}
\begin{description}
\item[Reference Risk Function:] Given a reference risk function $\tilde{\rho} \in {\cal R}$ and a parameter $\epsilon \geq 0$ that describes the maximum discrepancy between the candidate risk function and the reference risk function, the following set of risk functions can be defined accordingly:
$$ {\cal R}_{ref}(\epsilon) := \left\{ \rho \;\;\middle |\;\; || \rho - \tilde{\rho} ||_{\infty} \leq \epsilon \right\}, $$
where $||\cdot||_{\infty}$ stands for the infinity norm defined over functions $\rho:\Re^{|\Omega|}\rightarrow \Re$.
\end{description}
\vspace{5pt}

\begin{remark}
The constraints defining ${\cal R}_{inv}$ can also be viewed as a special case of the constraints inferred from preference elicitation. While the random losses $\vec{Z}^{t}(x)$ considered in ${\cal R}_{inv}$ are ``forward-problem" dependent, in preference elicitation any pair of random losses may be considered for comparison. We take such a unified perspective in formulating the inverse problem. 
\end{remark}

\begin{remark} \label{rm3}
It is worth mentioning here that it is also possible to consider norms other than the infinity norm in the definition of ${\cal R}_{ref}$. For instance, one might consider $L^2$ norm $||\rho||_{\phi}:=(\int \rho(z)^2 \phi(dz))^{1/2}$ for some probability measure $\phi$. However, it may not be clear whether this is practically useful because one may find it hard to specify the measure $\phi$ and to interpret the norm. In the case where $\phi$ is discrete, which requires only comparing $\rho$ and $\tilde{\rho}$ over finite points, the analysis presented in this paper can easily accommodate such a case.
\end{remark}

With the above definitions of ${\cal R}_{inv}$, ${\cal R}_{el}$, and ${\cal R}_{ref}$, one may consider different criteria to determine which risk function described by the above sets is the optimal choice. In particular, we consider the following four criteria. The first three are primarily concerned about the fitting of risk levels, whereas the fourth one addresses the fitting of prescribed decisions. The first criterion follows most closely the spirit of classical inverse optimization.
\begin{enumerate}
\item Minimizing the deviation from the reference risk function $\tilde{\rho}$:
\begin{eqnarray}
\inf_{\rho,\epsilon \in \Re} && \epsilon \nonumber \\
\text{subject to} &&  \rho \in {\cal R}\; ({\rm or}\; {\cal R}_F \text{ in the case of law invariance}), \label{eq:inv1} \\
&& \rho \in {\cal R}_{ref}(\epsilon) \cap {\cal R}_{inv} \cap {\cal R}_{el}(\{(L_k,U_k)\}_{k\in {\cal K}}).  \nonumber
\end{eqnarray}
\end{enumerate}

This model assumes that in addition to capturing the preferences implied by the observed decisions, there is a practical need to stay aligned with the reference risk function whenever possible. As mentioned in the introduction, the reference risk function can for example be the CVaR risk measure used to implement the safety-first principle. In general, one can apply the model to generate an alternative risk function that not only renders the observed decisions optimal but also maximally aligns with the chosen downside risk measure.

One may argue that in some cases the observed decisions might not be ``rigorously" optimal given that human beings are not perfectly rational. We can accommodate such a possibility by replacing the set ${\cal R}_{inv}$ with the following set based on sub-optimality:
$$ {\cal R}_{inv}(\gamma) = \left\{ \rho \;\;\middle |\;\; \rho(\vec{Z}^{t}(x^t)) \leq \rho(\vec{Z}^{t}(x)) + \gamma_t ,\; \forall x \in {\cal X}^{t},\; t \in {\cal T}\right\}. $$
Thus, if observed decisions are known to be sub-optimal, the following model may be considered that seeks to close the optimality gap.
\begin{enumerate} \setcounter{enumi}{1}
\item Minimizing the sub-optimality of observed decisions: 
\begin{eqnarray}
\inf_{\rho, \gamma \in \Re^{|{\cal T}|}} && \sum_{t \in {\cal T}} \gamma_t \nonumber \\
\text{subject to} &&  \rho \in {\cal R}\; ({\rm or}\; {\cal R}_F \text{ in the case of law invariance}),  \label{eq:inv2} \\
&& \rho \in {\cal R}_{inv}(\gamma) \cap {\cal R}_{el}(\{(L_k,U_k)\}_{k\in {\cal K}}) \cap {\cal R}_{ref}(\epsilon^*), \nonumber
\end{eqnarray}
where $\epsilon^* \in (0, \infty]$ is fixed beforehand.
\end{enumerate}

Here, one's priority is to ensure that the imputed risk function will render the observed decisions as favorable as possible. One may choose to set $\epsilon^*=\infty$ if needed. The central idea behind this model is that the observed decisions, albeit sub-optimal, still closely follow the decision maker's true preference. However, if the accuracy of the decision data is in doubt, one may instead fix $\gamma = \gamma^*$ to some value $\gamma^*$ larger than the optimal solution in (\ref{eq:inv2}). Moreover, if the concern is about potential risk underestimation, the following model provides a means to examine the worst possible risk.
\begin{enumerate} \setcounter{enumi}{2}
\item For any $\vec{Z}$, seek a worst-case risk function
\footnote{Although it may not be immediately obvious, one can actually confirm that the worst-case function $\rho$ is itself a feasible solution to the constraints in (\ref{eq:inv3}) (see Lemma \ref{lemo}) and hence the problem can be equivalently stated as: seek a function 
$\rho \in {\cal R} (\text{or }{\cal R}_F) \cap {\cal R}_{inv} (\gamma^*) \cap  {\cal R}_{el}(\{(L_k,U_k)\}_{k\in {\cal K}}) \cap {\cal R}_{ref}(\epsilon^*) $ such that
$ \rho \geq \rho' ,\;\; \forall \rho' \in {\cal R} (\text{or }{\cal R}_F) \cap {\cal R}_{inv}(\gamma^*) \cap {\cal R}_{el}(\{(L_k,U_k)\}_{k\in {\cal K}}) \cap {\cal R}_{ref}(\epsilon^*) .
$}
: 
\begin{eqnarray}
\rho(\vec{Z}) :=  &\;\;\;\;\;\;\;\;\;\;\;\;\;\; \sup_{\rho'} &   \rho'(\vec{Z}) \nonumber \\
& \text{subject to} & \rho' \in {\cal R}\; ({\rm or}\; {\cal R}_F \text{ in the case of law invariance}),  \label{eq:inv3} \\
&& \rho' \in {\cal R}_{inv} (\gamma^*) \cap {\cal R}_{el}(\{(L_k,U_k)\}_{k\in {\cal K}}) \cap {\cal R}_{ref}(\epsilon^*),  \nonumber
\end{eqnarray}
where $\gamma^* \in [0,\infty)^{|{\cal T}|}$ and $\epsilon^* \in (0, \infty]$ are fixed beforehand.
\end{enumerate}

The last criterion we consider is closely related to the idea behind the second criterion, where the goal is to reconcile the decision data. The difference is that not only do we like to ensure the optimal solution generated from the imputed risk function is aligned with the decision maker's risk preference, but we also want the solution itself to be close to the observed decision. To formalize this, we define the following set parameterized by a decision $x'$:
$$ \tilde{{\cal R}}_{inv}(x')  = \left\{ \rho \;\;\middle |\;\; \rho(\vec{Z}^T(x')) \leq \rho(\vec{Z}^T(x)),\;\forall x \in {\cal X}^T\ \right\}.$$
Note that to facilitate our later discussion in Section \ref{sec4}, we consider here only the case of single observation indexed by $T$ (indicating the most recent observation). Clearly, by fixing $x':=x^T$, the above set reduces to ${\cal R}_{inv}$. Our last criterion can be modelled as follows.
\begin{enumerate} \setcounter{enumi}{3}
\item Minimizing the deviation from an observed decision: 
\begin{eqnarray}
\inf_{\rho, x' \in {\cal X}} && ||x'-x^T|| \nonumber \\
\text{subject to} &&  \rho \in {\cal R}\; ({\rm or}\; {\cal R}_F \text{ in the case of law invariance}),  \label{eq:inv4} \\
&& \rho \in \tilde{{\cal R}}_{inv}(x') \cap {\cal R}_{ref}(\epsilon^*), \nonumber
\end{eqnarray}
where $||\cdot||$ is an arbitrary norm and $\epsilon^* \in (0,\infty]$ is fixed beforehand. 
\end{enumerate}

More detailed discussion about the case of multiple observations will be given in 
Section \ref{sec4}. The above model is particularly useful when there is a preference for the status quo decision $x^T$. For instance, an investor can prefer that the portfolio generated from the imputed risk function does not differ much from his most current portfolio.

\section{Solving the Inverse Problems} \label{iconv}
In this section, we address first the inverse problems (\ref{eq:inv1})--(\ref{eq:inv3}). In particular, we will base our discussions primarily on the inverse problem (\ref{eq:inv1}), which best highlights the fundamental complexity behind all the inverse problems. Once we walk through the steps it takes to resolve the complexity, it will also be clear how to solve inverse problems (\ref{eq:inv2})--(\ref{eq:inv3}). To put the problem into perspective, let us recast first the inverse problem (\ref{eq:inv1}) into
\begin{eqnarray}
\inf_{\rho} && ||\rho-\tilde{\rho}||_{\infty} \nonumber \\
\label{uniforminv} {\rm subject\;to}  && \rho \in {\cal R}\;\; (\text{or } {\cal R}_F \text{ in the case of law invariance}), \\
&& \rho(\vec{W}^{t}) \leq \rho(\vec{W}),\; \forall  \vec{W} \in {\cal W}^{t},\;  t \in {\cal T}, \label{invv}  \\
 &&  \rho(L_k) \leq \rho(U_k),\; \forall k\in {\cal K},  \label{inv2}
\end{eqnarray}
where $\vec{W}^{t} :=\vec{Z}^t(x^{t})$ and ${\cal W}^{t}:= \left\{\vec{Z}^t(x) \;\middle |\;x\in{\cal X}^{t} \right\}$.  

The above problem cannot be solved by traditional analysis for inverse optimization due to the  non-parametric nature of the risk function $\rho$. In a more specialized setting where one removes the optimality constraints (\ref{invv}) and replaces the norm $||\cdot||_{\infty}$ by a norm that requires only comparing $\rho$ and $\tilde{\rho}$ over finite points (see Remark \ref{rm3}), the problem might be solvable based on the non-parametric method developed in \cite{Delage:2015aa}. This method hinges on the observation that if only finitely many points need to be compared (e.g., (\ref{inv2}) involving only $|{\cal K}| < \infty$ pairs), the problem can be reduced to a finite-dimensional convex program whose size grows polynomially with the number of points needing comparison. Unfortunately, in the setting of the above inverse problem, because of the infinity norm and the optimality constraints (\ref{invv}) it necessarily involves infinitely many points (e.g., $\forall \vec{W} \in {\cal W}^t$) that must be compared and thus renders the method of \cite{Delage:2015aa} inapplicable.

While these difficulties may put in doubt the tractability of the inverse problem (\ref{uniforminv})--(\ref{inv2}), our key finding is that it is possible to bypass the difficulties by new analysis based on conjugate duality theory (see, e.g., \cite{Rockafellar:1974aa}). Our goal here is to present from a high-level perspective the key analysis steps to approach the problem. The technical details of the theory and proofs can be found in Appendix \ref{ssec00} and \ref{ssec03}.

\subsection{Imputing risk-averse functions} \label{gg}
We start by considering the general case where the observed decisions to a forward problem were made over acts, as described in Example \ref{exx1}. That is, we intend to solve the inverse problem 
(\ref{uniforminv})--(\ref{inv2}) with $\rho \in {\cal R}$. From here on, we make the following assumption about the reference risk function $\tilde{\rho}$.
\begin{definition}
A risk function $\rho$ is called a coherent risk measure if $\rho \in {\cal R}$ and it further satisfies $\rho(\lambda \vec{Z}) = \lambda \rho (\vec{Z})$ for any $\lambda \geq 0$ (scale invariance). 
\end{definition}

\begin{assumption} \label{asss1}
The reference risk function $\tilde{\rho} \in {\cal R}$ is a coherent risk measure.
\end{assumption}

The above assumption is not stringent because most risk measures applied in practice are coherent risk measures. Moreover, we apply the following well-known representation result of coherent risk measures and assume that such a representation is available (see Appendix \ref{ssec01} for the representations of several popular risk measures).

\begin{theorem} (\cite{artzner:coherentRM})
Any coherent risk measure admits the supremum representation of 
\begin{equation} \label{supcoh}
\rho(\vec{Z})=\sup_{p \in {\cal C}} p^{\top}\vec{Z}, 
\end{equation}
where ${\cal C}$ is a non-empty, closed, convex set of probability measures (i.e.,  
${\cal C} \subseteq \Delta \subseteq \Re^{|\Omega|}$ with $\Delta := \left\{p\in \Re^{|\Omega|}\;\middle |\; \vec{1}^\top p = 1,\;p\geq 0  \right\}$).
\end{theorem}

For convenience, in this paper we say that the coherent risk measure $\rho$ is supported by the set ${\cal C}$. Our first observation to solving the inverse problem (\ref{uniforminv})--(\ref{inv2}) is that it is possible to identify a subset of risk functions that is ``sufficiently" large to contain an optimal solution to the inverse problem. We rely on the following definition to characterize this subset of risk functions.
\begin{definition}
Given a set of random losses $\{\vec{Z}_j\}_{j\in {\bar {\cal J}}}$ for some ${\bar {\cal J}}$ and a set ${\bar {\cal C}} \subseteq \Delta \subseteq \Re^{|\Omega|}$, we say that a function $\rho$ is supported by the pair $(\{\vec{Z}_j\}_{j\in {\bar {\cal J}}}, {\bar {\cal C}})$ if it belongs to the following set of functions:
$$ {\cal L}(\{\vec{Z}_j\}_{j\in {\bar {\cal J}}}, {\bar {\cal C}}) := 
\left\{ \rho_{\delta}\;  \middle | \; \exists \delta \in \mathbb{R}^{|{\bar {\cal J}}|},\; \forall \vec{Z},\;  
\rho_{\delta}(\vec{Z}) =  \sup_{p \in {\bar {\cal C}}}  p^{\top}\vec{Z} - \max_{j\in {\bar {\cal J}}}\left\{p^{\top}\vec{Z}_{j}-\delta_{j}\right\}  \right\}. $$
\end{definition}

We also need the following definition that will be applied throughout the rest of this paper.
\begin{definition}
Let $\{\vec{X}_{j}\}_{j\in {\cal J}}$ be the random losses in the union $\{\vec{W}^{t}\}_{t \in {\cal T}} \cup \{\vec{L}_{k}\}_{k\in {\cal K}}\cup\{\vec{U}_{k}\}_{k\in {\cal K}} \cup \vec{0}$. Without loss of generality, we assume $\{\vec{X}_{t}\}_{t\in {\cal T}} = \{\vec{W}^{t}\}_{t \in {\cal T}}$.
\end{definition}

\begin{proposition} \label{pro4}
Given that Assumption \ref{asss1} holds and the set of optimal solutions is non-empty, there must exist a function $\rho_{\delta} \in {\cal L}(\{\vec{X}_j\}_{j\in {\cal J}}, {\cal C})$, where ${\cal C}$ is the set that supports the reference risk function $\tilde{\rho}$, which is optimal to the inverse problem (\ref{uniforminv})--(\ref{inv2}). Moreover, given any optimal solution $\rho_0$ to the inverse problem (\ref{uniforminv})--(\ref{inv2}), there always exists a function $\rho_{\delta} \in {\cal L}(\{\vec{X}_j\}_{j\in {\cal J}}, {\cal C})$ that is also optimal and bounds from above the solution $\rho_0$:  
\begin{equation}
\rho_{\delta}(\vec{Z}) \geq \rho_0(\vec{Z}),\; \forall \vec{Z},  \label{eqqq}
\end{equation}
namely, by setting $\delta_j = \rho_0(\vec{X}_j)$, $\forall j \in {\cal J}$. 
\end{proposition}
\begin{proof}{Proof of Proposition \ref{pro4}}
The general strategy of the proof is to show that if there exists a risk
function $\rho_0 \in {\cal R}$ that is optimal to the inverse problem with some optimal value $u^{*}<\infty$, there must exist a risk function $\rho_{\delta} \in {\cal L}(\{\vec{X}_j\}_{j\in {\cal J}}, {\cal C})$ that is also optimal to the problem, namely by setting \[\delta_j = \rho_0(\vec{X}_j),\; \forall j \in {\cal J}.\] 
We leave the details of confirming this claim and that it implies (\ref{eqqq}) to Appendix
\ref{ssec03}.
\Halmos
\end{proof}

The above result implies firstly that there is no loss of optimality if we restrict our search of an optimal solution to the set ${\cal L}(\{\vec{X}_j\}_{j\in {\cal J}}, {\cal C})$. This significantly reduces the complexity of solving the inverse problem (\ref{uniforminv})--(\ref{inv2}) because the search over the set ${\cal L}(\{\vec{X}_j\}_{j\in {\cal J}}, {\cal C})$ can be effectively done by searching the space of parameter $\delta$. This provides the assurance that even though one cannot identify a decision maker's risk function through any parametric form, it is still possible to learn the risk function by tuning only a finite number of parameters (i.e., $|{\cal J}|$ many parameters). Moreover, we know from the inequality \eqref{eqqq} that in the case where the optimal solution to the inverse problem is not unique, the optimal solution found in the set ${\cal L}(\{\vec{X}_j\}_{j\in {\cal J}}, {\cal C})$ would be the most ``robust" because it provides the most conservative estimate of risk. What can appear counterintuitive is that the representation of $\rho_{\delta}$ does not depend on the feasible set ${\cal W}^{t} = \left\{\vec{Z}^t(x)\;\middle |\;x\in {\cal X}^{t}\right\}$, and one may wonder how an imputed risk function $\rho_{\delta}$ then takes into account the information about the set ${\cal W}^{t}$.  The short answer is that the information would be incorporated into the risk function $\rho_{\delta}$ when it comes to the point of determining the value of the parameter $\delta$
\footnote{
To provide a better grasp of this, let us suppose for now that one is able to efficiently determine if $\delta$ belongs to the following set:
$$ \Theta: = \left\{ \delta \in \mathbb{R}^{|{\cal J}|}\; \middle |\; \exists \rho \in {\cal R}\cap {\cal R}_{inv} \cap {\cal R}_{el}(\{(L_k,U_k\}_{k\in {\cal K}}) {\rm \; such\; that\;} \rho(\vec{X}_j) = \delta_j ,\;\forall j \in {\cal J} \right\}.$$
Then, one can quickly confirm that because of (\ref{eqqq}) there exists $\delta \in \Theta$ such that $\rho_{\delta}$ will necessarily satisfy the optimality condition (i.e., $\rho_{\delta} \in {\cal R}_{inv}$). Indeed, we have $\rho_{\delta}(\vec{W}^{t}) = \rho_0(\vec{W}^{t}) \leq \rho_0(\vec{Z}) \leq \rho_{\delta}(\vec{Z}),  \;\forall \vec{Z} \in {\cal W}^t$. 
This explains why the complexity actually lies in dealing with the set $\Theta$. 
It is not necessarily clear, however how one can tractably search over the set $\Theta$. The fact that ${\cal L}(\{\vec{X}_j\}_{j\in {\cal J}}, {\cal C}) \supseteq \{ \rho_{\delta} \;|\; \delta \in \Theta\}$ suggests that we may first consider the relaxed problem of searching over the set 
${\cal L}(\{\vec{X}_j\}_{j\in {\cal J}}, {\cal C})$.}.
To show how the value of $\delta$ can be calculated, we first need the following intermediate result. The result shows that searching over the set ${\cal L}(\{\vec{X}_j\}_{j\in {\cal J}}, {\cal C})$ can be equivalently formulated as a finite-dimensional system of constraints over the parameter $\delta$ where the solution $\delta$ corresponds to how function values may be assigned over the set $\{\vec{X}_j\}_{j\in{\cal J}}$ that supports the risk function. 

\begin{proposition} \label{pro2}
Given any $\rho_{\delta} \in {\cal L}(\{\vec{Z}_j\}_{j\in {\bar {\cal J}}}, {\bar {\cal C}})$, there must exist $y_j \in {\bar {\cal C}}$, $j\in {\bar {\cal J}}$ such that $\rho_{\delta}$ satisfies the following system of constraints: 
\begin{eqnarray*}
 && \rho_{\delta}(\vec{Z}_{j})+y_{j}^{\top}(\vec{Z}_{i}-\vec{Z}_{j})\leq\rho_{\delta}(\vec{Z}_{i}),  \; \forall i\neq j,\\
  && y_j \in {\bar {\cal C}},\; \forall j\in {\bar {\cal J}}. \nonumber 
\end{eqnarray*}
Conversely, given any solution $\{y_{j}^{*}\}_{j\in {\bar {\cal J}}},\;\{\delta_{j}^{*}\}_{j\in {\bar {\cal J}}}$
that satisfies the system below:
\begin{eqnarray}
 && \delta_{j}+y_{j}^{\top}(\vec{Z}_{i}-\vec{Z}_{j})\leq\delta_{i},\;  \forall i\neq j,\label{eq:linearsystema}\\
  && y_j \in {\bar {\cal C}},\; \forall j\in {\bar {\cal J}}, \nonumber 
\end{eqnarray}
there must exist a $\rho_{\delta} \in {\cal L}(\{\vec{Z}_j\}_{j\in {\bar {\cal J}}}, {\bar {\cal C}})$ that satisfies $\rho_{\delta}(\vec{Z}_{j})=\delta_{j}^{*}$, namely the function 
\begin{equation}
\rho_{\delta^*}(\vec{Z})=\sup_{y\in {\bar {\cal C}}}y^{\top}\vec{Z}-\max_{j\in {\bar {\cal J}}} \left\{y^{\top}\vec{Z}_{j}-\delta_{j}^{*}\right\}.\label{eq:polyreppeat}
\end{equation}
\end{proposition}

We next show that by restricting ourselves to the search in the set ${\cal L}(\{\vec{X}_j\}_{j\in {\cal J}}, {\cal C})$, we can also identify how to search in the subset
${\cal L}(\{\vec{X}_j\}_{j\in {\cal J}}, {\cal C}) \cap {\cal R}_{inv} \cap {\cal R}(\{(L_k, U_k)\}_{k\in {\cal K}}).$
Namely, it is equivalent to adding additional constraints to the system (\ref{eq:linearsystema}). This final system comprises finite-dimensional convex constraints. 

\begin{proposition} \label{pro3}
Given any $\rho_{\delta} \in {\cal L}(\{\vec{X}_j\}_{j\in {\cal J}}, {\cal C})$, the function further satisfies $\rho_{\delta} \in {\cal R}_{inv} \cap {\cal R}(\{(L_k, U_k)\}_{k\in {\cal K}})$ if and only if there exists $y_j \in {\cal C}$, $j \in {\cal J}$ such that $\delta$ satisfies the following system:
\begin{eqnarray}
 && \delta_{j}+y_{j}^{\top}(\vec{X}_{i}-\vec{X}_{j}) \leq\delta_{i},\;  \forall i,j \in {\cal J},\;  i\ne j,\; \label{c1}\\
 && y_{j} \in {\cal C},\; \forall j \in {\cal J}, \label{c2} \\
 && y_{t}^{\top}\vec{X}_{t}\leq h_t(y_{t}),\; \forall t \in {\cal T}, \label{invcond} \\
 && \delta_{i}\leq\delta_{j},\; \forall(i,j)\in{\cal B},\label{precond} 
\end{eqnarray}
where 
$ h_t(y) := \min_{x} \left\{ y^\top \vec{Z}^t(x) \;\middle |\; x \in {\cal X}^t\right\}$ and ${\cal B}:= \left\{(i,j)\in \{1,2,...,{\cal J}\}^{2}\;\middle |\;(\vec{X}_{i},\vec{X}_{j})\in\{(\vec{L}_{k},\vec{U}_{k})\}_{k\in {\cal K}}\right\}$.
\end{proposition} 

One can thus resort to solving the above system \eqref{c1}--\eqref{precond} to determine more efficiently the values of the parameters $\delta_j$, $j \in {\cal J}$, so that its corresponding risk function $\rho_{\delta}$ would necessarily satisfy all the imposed conditions. It is also clear at this point that the above system indeed incorporates the information about the set ${\cal W}^{t} = \left\{\vec{Z}^t(x)\;\middle |\;x\in {\cal X}^{t}\right\}$ (i.e., in the definition of $h_t(y)$). 

As the final step, we discuss how to ensure that the risk function $\rho_{\delta}$ also minimizes the objective function $|| \rho_{\delta} - \tilde{\rho} ||_{\infty}$. It turns out that the absolute difference between $\rho_{\delta}(\vec{Z})$ and $\tilde{\rho}(\vec{Z})$ at any random loss $\vec{Z}$ can always be bounded by the difference at one of the random losses from the set $\{\vec{X}_j\}_{j\in {\cal J}}$ (which supports the risk function $\rho_{\delta}$). This is due to the piecewise linear structure embedded in the representation of $\rho_{\delta}$ (i.e., the term $\max_{j\in {\cal J}}\{p^{\top}\vec{X}_{j}-\delta_{j}\}$ \footnote{In particular, the linearity of each piece in the term has the implication that the largest difference can always be found at the support points $\vec{X}_j$, $j\in {\cal J}$.}). As a result, we need only to seek a risk function $\rho_{\delta}$ that minimizes the absolute differences over finite points (i.e., $\max_{j \in {\cal J}} |\rho_{\delta}(\vec{X}_j)-\tilde{\rho}(\vec{X}_j)|$). Combining this with the above observation that $\rho_{\delta}$ can be found by solving a finite-dimensional convex system over the parameter $\delta$, we arrive at the conclusion that to solve the inverse problem (\ref{eq:inv1}), one only needs to solve a convex program over the parameter $\delta$. This main result is presented below, and its detailed proof can be found in Appendix \ref{ssec03}. We note here that the steps we present to analyze the problem are particularly important from a methodological perspective. This for example enables us to unravel the more complicated case presented in the next section.

\begin{theorem} \label{2ndmain}
Given that Assumption \ref{asss1} holds and that the set of optimal solutions is non-empty, the inverse optimization problem (\ref{eq:inv1}) can be solved by a risk function $\rho_{\delta} \in {\cal L}(\{\vec{X}_{j}\}_{j\in {\cal J}}, {\cal C})$, where ${\cal C}$ is the support set of the reference risk function $\tilde{\rho}$ and the parameter $\delta$ is calculated by solving 
\begin{eqnarray}
\min_{\delta \in \Re^{|{\cal J}|},y_j \in \Re^{|\Omega|}} & \max_{j \in {\cal J}} |\delta_j - \tilde{\rho}(\vec{X}_j)| \label{final}\\
 {\rm subject\;to}& (\ref{c1}),\; (\ref{c2}),\; (\ref{invcond}),\; (\ref{precond}). \nonumber
\end{eqnarray}

The problem (\ref{final}) is a convex optimization problem and is polynomially solvable if 
\begin{enumerate}
\item given any $t \in {\cal T}$, the forward problem $\min_{x\in{\cal X}^{t}}\rho(\vec{Z}^t(x))$ is polynomially solvable in the case where $\rho(\vec{Z}^t(x)) := \sum_{i=1}^{|\Omega|} y_i Z^t(x,\omega_i)$ for any $y \in \Re^{|\Omega|}_+$,  and 
\item the support set ${\cal C}$ for $\tilde{\rho}$ is equipped with an oracle that can for any $p \in \Re^{|\Omega|}$ either confirm that $p \in {\cal C}$ or provide a hyperplane that separates $p$ from ${\cal C}$ in polynomial time.
\end{enumerate}
\end{theorem}

Intuitively, the above theorem summarizes a two-step procedure to identify an optimal risk function to the inverse problem (\ref{eq:inv1}). First, it computes the parameter $\delta$ by solving (\ref{final}) so that it can determine the function values $\rho_{\delta}(\vec{Z})$ over finite points $\{\vec{X}_j\}_{j\in {\cal J}}$, namely by setting $\rho_{\delta}(\vec{X}_j) = \delta_j$, $j \in {\cal J}$. Then, it interpolates (and extrapolates) other function values (i.e., $\rho_{\delta}(\vec{Z})$ for any $\vec{Z}$), based on the structure of $\rho_{\delta}$ (i.e., the definition of the set ${\cal L}(\{\vec{X}_j\}_{j\in {\cal J}}, {\cal C})$). It is guaranteed by the theorem that a risk function $\rho_{\delta}$ interpolated (and extrapolated) as such will necessarily satisfy all the imposed conditions and reach the optimal value. The main computational complexity of the procedure lies in solving the problem (\ref{final}), whose complexity, roughly speaking, is in the same order of the complexity of the forward problem. It is thus assured by the theorem that one can always efficiently learn decision makers' risk preference from their past decisions as long as the forward problems that they solved are amenable to efficient solution methods. This is the case, for example, when one tries to learn about investors' risk functions from their past investment decisions, given that many portfolio selection problems (i.e., the forward problem) can be solved efficiently. Note that the condition about the oracle is very mild, which is usually required for proving any general tractability result. Note also that the problem (\ref{final}) can be solved efficiently as a conic program (\cite{NEMIROVSKI:2007aa}), under mild regularity conditions, if the forward problem and the set ${\cal C}$ are conic representable.

To help deepen one's understanding and intuition about how the function values are interpolated (and extrapolated), we present the following two alternative formulations of $\rho_{\delta}$.
\begin{corollary} \label{linprog}
The risk function $\rho_{\delta} \in {\cal L}(\{\vec{X}_j\}_{j\in {\cal J}}, {\cal C})$ can be equivalently formulated as 
\begin{eqnarray}
 \rho_{\delta}(\vec{Z}) =  \sup_{p,s\in \Re} && p^\top \vec{Z} - s \nonumber \\
    {\rm subject\; to} && p^\top \vec{X}_j - s \leq \delta_j ,\; \forall j \in {\cal J}, \nonumber \\
                               && p \in {\cal C}. \nonumber
\end{eqnarray}
It can also be formulated as 
\begin{eqnarray}
    \rho_{\delta}(\vec{Z}) =  \inf_{t \in \Re} && t \nonumber \\
    {\rm subject\; to} && \vec{Z} - t \in {\cal A},\nonumber 
\end{eqnarray}
where $
{\cal A}:= \left\{ \vec{Z}\; \middle |\; \rho_{\delta}(\vec{Z}) \leq 0 \right\}$. In the case 
${\cal C} := \left\{ p \in \Delta \;\middle |\; \vec{R}_{\nu}^\top p \leq b_{\nu},\; \nu \in {\cal V}\right\}$ and $|{\cal V}|<\infty$, we have
\[
{\cal A} = 
\left \{ \vec{Z}\;\middle |\; \exists \theta \in \mathbb{R}^{|{\cal J}|}_{+},\; \vartheta \in \mathbb{R}^{|{\cal V}|}_+, \;
\begin{array}{cc}
 &\vec{Z} \leq \sum_{j\in {\cal J}} \theta_{j} (\vec{X}_j - \delta_j) + \sum_{\nu \in {\cal V}} \vartheta_{\nu}(\vec{R}_{\nu} - b_{\nu}), \\
  &\sum_{j\in {\cal J}} \theta_j = 1
\end{array} \right\}.
\]
\end{corollary}

The first formulation of $\rho_{\delta}$ indicates that once the parameter $\delta$ is determined, the function $\rho_{\delta}$ then evaluates any random loss $\vec{Z}$ by a linear function $p^\top \vec{Z}-s$ that bounds from below all $\delta$s (i.e., all assigned function values over $\{\vec{X}_j\}_{j \in {\cal J}}$) as tightly as possible subject to its sub-gradient $p$ bounded by ${\cal C}$. In the second formulation, the definition of the set ${\cal A}$ is well known in the risk theory (see, e.g., \cite{artzner:coherentRM}), which stands for an ``acceptance set" (i.e., the set of random losses with acceptable (non-positive) risk). In particular, here the function value $\rho_{\delta}(\vec{Z})$ can be translated into the minimum amount of cash (i.e., constant $t$) required to render the final loss $\vec{Z}-t$ acceptable (i.e., lying in the set ${\cal A}$). This sub-level set, as shown above in the second formulation of ${\cal A}$, can be viewed as a monotone convex set generated from the non-negative span of the sets $\{\vec{X}_j - \delta_j\}_{j \in {\cal J}}$ and $\{\vec{R}_{\nu}-b_{\nu}\}_{\nu \in {\cal V}}$. This sheds light on how the whole contour  $\left\{\vec{Z}\;\middle |\;\rho_{\delta}(\vec{Z}) = 0\right\}$ (which determines $\rho_{\delta}(\vec{Z})$ for any $\vec{Z}$ value) is interpolated (and extrapolated) once $\delta$ is determined: namely, having set $\rho_{\delta}(\vec{X}_j) = \delta_j$, $\forall j \in {\cal J}$ over the finite points is equivalent to fixing first the points $\{\vec{X}_j - \delta_j\}_{j \in {\cal J}}$ on the boundary of ${\cal A}$. Then, the rest of the boundary of ${\cal A}$ is interpolated by spanning a cone from $\{\vec{R}_{\nu}-b_{\nu}\}_{\nu \in {\cal V}}$ at every point in the monotone convex hull of $\{\vec{X}_j - \delta_j\}_{j \in {\cal J}}$. 

Figure \ref{update} provides an illustration, in two states, of the updates of $\rho_{\delta}$ as more information is acquired. In each of the plots, the level set $\left\{ \vec{Z}\; \middle |\; \rho_{\delta}(\vec{Z}) = 0\right\}$ is drawn over losses $\vec{Z}$ bounded between -1 and 1. Note that because of the property of translation invariance, this level set completely characterizes the risk function $\rho_{\delta}$
\footnote{That is, $\rho_{\delta}(\vec{X}) = c \Rightarrow \rho_{\delta}(\vec{X}-c) = 0$, which implies that any other level set is just the zero-level set shifted along $\vec{1}$ by $c$.}. In all three plots, we consider a decision maker optimizing a linear function $\vec{Z}^{t}(x) = Z^t x \in \mathbb{R}^2$, where $Z^t \in \mathbb{R}^{2\times 5}$, subject to budget constraints ${\cal X}:=\left\{x \in \mathbb{R}^5\;\middle |\; \vec{1}^\top x = 1,\; x \geq 0\right\}$. In the first plot, we have 
$Z^1 = \left(\begin{array}{ccccc}
0.5 & 0.225 & 0.275 & 0.5 & 0.65 \\
0.15 & 0.05 & -0.1 & -0.15& -0.1
\end{array}\right)$
and the observed decision $x^1(3)=1$ and 0 otherwise.
Here, the risk function $\rho_{\delta}$ is updated according to the largest possible $\delta_1$ that is feasible. To verify this, one can see from the dash lines the range of feasible subgradients that render the decision $x^1$ optimal. For any other convex function that assigns $Z^1x^1$ a value larger than $\delta_1$ (i.e., the level set is ``farther" from $Z^1x^1$), its subgradient at the point $Z^1x^1- \delta_1$ will fall outside the range of feasible subgradients. In the next plot, an additional observation is considered. We have 
$Z^2 = \left(\begin{array}{ccccc}
0.55 & 0.425 & 0.5 & 0.75 & 0.90 \\
-0.3 & -0.35 & -0.5 & -0.55& -0.5
\end{array}\right)$ and the observed decision $x^2(3)=1$ and 0 otherwise.
One can see that $\delta_1$, which is feasible in the first plot, is no longer feasible. This is because to make the subgradients at $Z^2x^2 - \delta_2$ feasible (i.e., rendering $x^2$ optimal) and retain the convexity of the sublevel set ${\cal A}$, we must reduce $\delta_1$. One would not be able to find any other level set that is farther from $Z^1x^1$ (and resp. $Z^2x^2$) than $\delta^1$ (and resp. $\delta^2$) and retains the convexity of ${\cal A}$. In the last plot, we assume that a ${\rm CVaR}_{\beta}$ measure with $\beta = 0.4$ is provided as the reference risk function $\tilde{\rho}$, which is supported by the set $ {\cal C}: = \left\{p\;\middle |\; \vec{1}^\top p = 1,\; 0 \leq p \leq \beta \right\}$. Now the values of $\delta_1$ and $\delta_2$ can only be feasible if the subgradients at $Z^1x^1 - \delta_1$ and $Z^2x^2 - \delta_2$ are ``confined" by ${\cal C}$. Moreover, the set ${\cal A}$ is ``extrapolated" by ${\cal C}$ also (e.g., over the points $\left\{ \vec{Z} \; \middle |\; Z(\omega_1) < 0 \right\}$).

\begin{figure}[h]
\begin{center}
\includegraphics[scale=0.45]{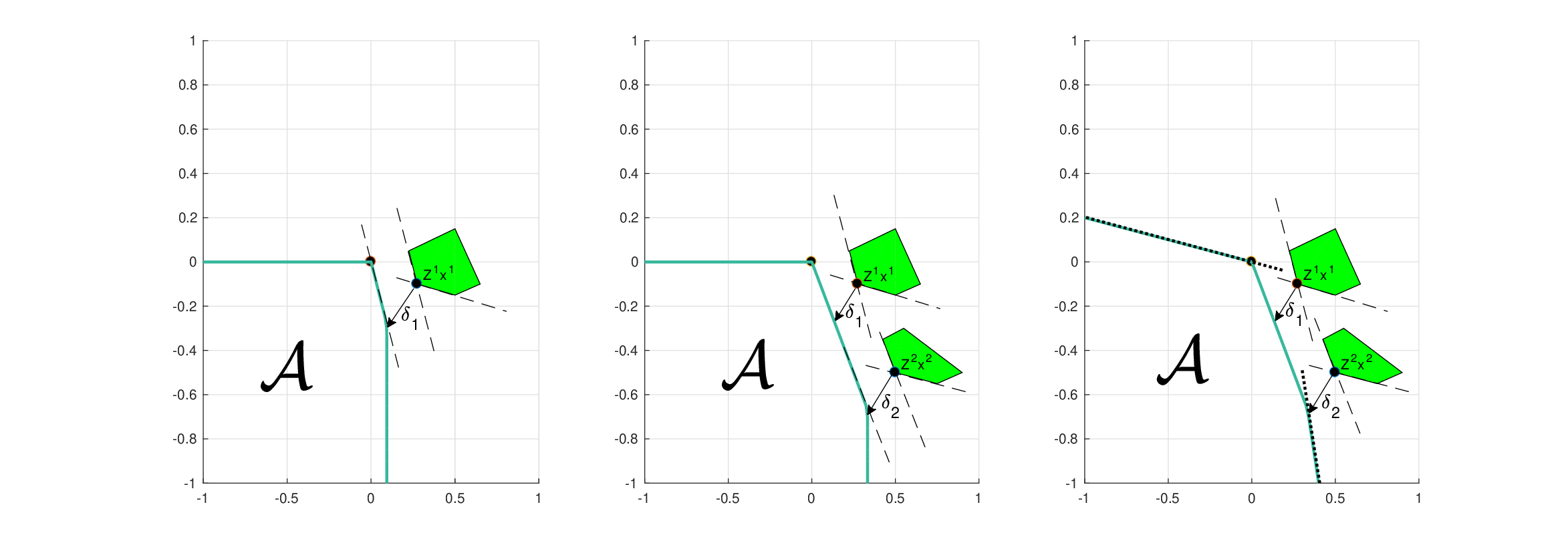}\caption{The updates of the set ${\cal A}= \left\{\vec{Z}\;\middle |\; \rho_{\delta}(\vec{Z}) \leq 0\right\}$: with one observed decision (Left), with two observed decisions (Center), and with ${\rm CVaR}_{0.4}$ additionally as the reference risk function (Right) 
}
\label{update}
\end{center}
\end{figure}

It is valuable to point out here the similarities and differences between our approach and other non-parametric approaches. In particular, \cite{Bertsimas:2014aa} also consider a non-parametric inverse problem, albeit in a very different setting from ours, and  
show that it is possible to reduce the problem by restricting the search of an optimal solution to a smaller class of functions supported by finite points only. More specifically, they show that to seek a subgradient function $y : \Re^{|\Omega|} \rightarrow \Re^{|\Omega|}$ that satisfies the optimality conditions 
\begin{equation}
y(\vec{W}^t)^\top (\vec{W} - \vec{W}^t ) \geq 0, \; \forall \vec{W} \in {\cal W}^{t},\; t \in {\cal T} ,\label{ber}
\end{equation}
it suffices to search in the set 
$\left\{ y : \Re^{|\Omega|} \rightarrow \Re^{|\Omega|} \;\middle |\; \exists \alpha_{i,t} \in \mathbb{R},\;
y_i(\vec{Z}) = \sum_{t\in {\cal T}} \alpha_{i,t} k(\vec{W}^t,\vec{Z}),\; i =1,...,|\Omega|\right\}$,
where $k : \Re^{|\Omega|} \times \Re^{|\Omega|} \rightarrow \mathbb{R}$ is a kernel function. The insight here is that such a class of functions provides sufficient flexibility to locally fit any given values $y(\vec{W}^t)$, $t \in {\cal T}$. There is however a fundamental difficulty to apply this method in our setting: namely that it is not amenable to incorporating global properties such as convexity. On the other hand, the class of functions we consider (i.e., ${\cal L}(\{\vec{X}_j\}_{j\in {\cal J}}, {\cal C})$) can both provide the flexibility for local fitting and incorporate the global properties of convex risk functions. We should note, however, that their kernel approach does not rely on convex analysis as we do in this paper and hence might be useful to handle the case of non-convexity where local and global optimality do not coincide. 

Although in a different context there is another non-parametric approach for convex interpolation, which has been successfully applied for instance in regression under the name of convex regression (\cite{Boyd:2004:CO:993483}). This approach seeks a convex function that best fits observed function values over finite points $\{\vec{X}_j\}_{j\in {\cal J}}$. The approach involves solving the constraints (\ref{c1}) also to determine the function values over finite points, and it uses a piecewise linear function to interpolate other function values. In this vein, closer to the context of this paper is the work of \cite{Delage:2015aa} mentioned earlier in the section (also \cite{armbruster:dmupii}), which in fact involves also solving the constraints (\ref{c1}) to determine function values over finite points and from there to identify the worst-case convex function. Indeed, as these works and our work are both concerned about convexity, the constraint (\ref{c1}) naturally arises. Our work in this sense can be viewed as a generalization of these works: namely that our work further addresses the comparison of function values over infinite points by expanding the system (\ref{c1}) and interpolating via a richer class of functions ${\cal L}(\{\vec{X}_j\}_{j\in {\cal J}}, {\cal C})$.

The discussions up to this point have laid enough ground work for solving other inverse problems. In particular, to solve the inverse problem (\ref{eq:inv2}), one needs only note that the property of translation invariance enables us to formulate the constraint in ${\cal R}_{inv}(\gamma)$ equivalently as 
$$ {\cal R}_{inv}(\gamma) = \left\{ \rho \;\;\middle |\;\; \rho(\vec{Z}^{t}(x^t) - \gamma_t) \leq \rho(\vec{Z}^{t}(x)),\; \forall x \in {\cal X}^{t},\; t \in {\cal T}\right\}.$$ 
Based on the same analysis, we arrive at the following. 
\begin{corollary}
Under the same assumption in Theorem \ref{2ndmain}, the inverse optimization problem (\ref{eq:inv2}) can be solved by a risk function $\rho_{\delta} \in {\cal L}(\{\vec{X}_j\}_{j\in {\cal J}}, {\cal C})$, where ${\cal C}$ is the support set of the reference risk function $\tilde{\rho}$ and the parameter $\delta$ is calculated by solving 
\begin{eqnarray}
\min_{\delta \in \Re^{|{\cal J}|},y_{j}\in \Re^{|\Omega|}, \gamma \in \Re^{|{\cal T}|}} && \sum_{t \in {\cal T}} \gamma_t \nonumber\\
{\rm subject\; to} && y_{t}^{\top}\vec{X}_{t}\leq h_t(y_{t}) + \gamma_t,\;\;\; t \in {\cal T}, \nonumber \\
                          && |\delta_j - \tilde{\rho}(\vec{X}_j)| \leq \epsilon^*,\;\;\; \forall  j \in {\cal J}, \nonumber \\
                           &&  (\ref{c1}),\; (\ref{c2}),\; (\ref{precond}). \nonumber
\end{eqnarray}
\end{corollary}

To solve the problem (\ref{eq:inv3}), we know already from Lemma \ref{lemo} that there exists a feasible function $\rho$ that bounds from above all other feasible functions. Following the proof of Proposition \ref{pro4}, one can further confirm  that there must exist $\rho_{\delta} \in {\cal L}(\{\vec{X}_j\}_{j\in {\cal J}},{\cal C})$ that bounds from above the risk function $\rho$. We can thus conclude that it 
suffices to search in the set ${\cal L}(\{\vec{X}_j\}_{j\in {\cal J}},{\cal C})$ for a worst-case risk function by maximizing all $\delta$s. This can be formulated equivalently as the following problem.

\begin{corollary}
Under the same assumption in Theorem \ref{2ndmain}, the inverse optimization problem (\ref{eq:inv3}) can be solved by a risk function $\rho_{\delta} \in {\cal L}(\{\vec{X}_j\}_{j\in {\cal J}}, {\cal C})$, where ${\cal C}$ is the support set of the reference risk function $\tilde{\rho}$ and the parameter $\delta$ is calculated by solving 
\begin{eqnarray}
\max_{\delta \in \Re^{|{\cal J}|},y_{j}\in \Re^{|\Omega|}} && \sum_{j\in {\cal J}} \delta_j \nonumber\\
 {\rm subject\; to}&& y_{t}^{\top}\vec{X}_{t}\leq h_t(y_{t}) + \gamma^*_t,\;\;\; \forall t \in {\cal T}, \nonumber \\
 && |\delta_j - \tilde{\rho}(\vec{X}_j)| \leq \epsilon^*,\;\;\; \forall  j \in {\cal J}, \nonumber \\
  && (\ref{c1}),\; (\ref{c2}),\; (\ref{precond}). \nonumber
\end{eqnarray}
\end{corollary}

\subsection{Imputing permutation-invariant risk-averse functions} \label{permrisk}
We introduce in this section the notion of permutation invariance, which will enable us to identify a subclass of risk-averse functions that can be of practical interest. The notion also provides the basis for the discussion of law invariance in Section \ref{sec33}. We say that an operator $\sigma:\Re^{|\Omega|}\rightarrow\Re^{|\Omega|}$ is a permutation operator over $\vec{Z} \in \Re^{|\Omega|}$ if it satisfies $(\sigma(\vec{Z}))_{i}=(\vec{Z})_{g^{-1}(i)}$ for any $\vec{Z}\in\Re^{|\Omega|}$, where $g:\{1,...,|\Omega|\}\rightarrow\{1,...,|\Omega|\}$ is a bijective function that permutes over $|\Omega|$ elements. We denote by $\Sigma$ the set of all permutation operators. 

\begin{definition} ({\it Permutation-invariant risk-averse functions}) Let $\bars{{\cal R}}$ denote the set of permutation-invariant risk-averse functions defined by
$$ \bars{{\cal R}}:= \left\{ \rho \in {\cal R}\;\middle |\; \rho(\vec{Z})=\rho(\sigma(\vec{Z})),\;\forall\sigma\in\Sigma,\; \forall \vec{Z} \in \mathbb{R}^{|\Omega|} \right\}.$$
\end{definition}

Here we should revisit Example \ref{exx2} in Section \ref{sec21}, which provides an important class of permutation-invariant risk-averse functions. Namely, any distributionally robust risk measure $\rho^{\uparrow}$ defined based on a phi-divergence criteria and an empirical distribution $\hat{p} = \frac{1}{M} \vec{1}$ satisfies $\rho^{\uparrow} \in \bars{{\cal R}}$. This is not hard to confirm once one recognizes that the function $D(q,\hat{p})$ satisfies $D(q,\hat{p}) = D(\sigma(q), \hat{p}),\; \forall \sigma \in \Sigma$. Indeed, although a distributionally robust risk measure provides more conservative estimates of risk, in principle it should not be sensitive to the ordering of $\vec{Z}$ given that $\rho^{\uparrow}$ are built based on samples. More generally, in any case where the decision maker is found to be insensitive to the ordering, one may consider solving the inverse problems by replacing $\rho \in {\cal R}$ with $\rho \in \bars{{\cal R}}$. Moreover, we should assume in the inverse problems that the reference risk function $\tilde{\rho}$ employed is also permutation invariant. As shown in Lemma \ref{lem2}, it is equivalent to making the following assumption.
\begin{assumption} \label{asperm}
The support set of the reference risk function $\tilde{\rho}$, now denoted by $\bars{{\cal C}}$, satisfies $p \in \bars{{\cal C}} \Leftrightarrow \sigma(p) \in \bars{{\cal C}}$. 
\end{assumption}

It is technically involved however to solve the inverse problems that take into account all the possible permutations. In particular, difficulty arises when one seeks a risk function satisfying the optimality condition that now takes the form 
$$  \rho(\sigma(\vec{W}^t)) \leq \rho(\sigma'(\vec{W})),\; \forall \vec{W} \in {\cal W}^t,\;  \forall \sigma, \sigma' \in \Sigma,\; t \in {\cal T},$$
where the set $\left\{\sigma'(\vec{W})\;\middle | \sigma' \in \Sigma,\; \vec{W}\in {\cal W}^t \right\}$ is non-convex. However, as detailed in Appendix \ref{ssec03}, Proposition \ref{pro9}, one can resolve this difficulty by following closely the steps presented in the previous section. Namely, one can apply the following definition to search over the subset $\bars{{\cal L}}(\{\vec{X}_j\}_{j\in {\cal J}}, \bars{{\cal C}})$, which necessarily contains an optimal solution to the inverse problem, and there again the conjugate duality theory comes to our rescue. 

\begin{definition}
Given a support set $\bar{{\cal C}}$ that is permutation invariant, we say that a function $\bars{\rho}_{\delta}$ is permutation invariant and supported by the pair $(\{\vec{Z}_j\}_{j\in {\bar {\cal J}}}, \bar{{\cal C}})$ if it belongs to the following set of functions
$$ \bars{{\cal L}}(\{\vec{Z}_j\}_{j\in {\bar {\cal J}}}, \bar{{\cal C}}) := 
\left\{ \bars{\rho}_{\delta}\;  \middle | \; \exists \delta \in \mathbb{\R}^{| {\bar {\cal J}}|},\; 
\bars{\rho}_{\delta}(\vec{Z}) =  \sup_{p \in \bar{{\cal C}}}  p^{\top}\vec{Z} - \max_{\sigma\in \Sigma, j\in {\bar {\cal J}}} \left\{p^{\top}\sigma(\vec{Z}_{j})-\delta_{j} \right\}  \right\}. $$
\end{definition}

The figure below provides some intuition of the above functions, where we continue the examples presented in Figure \ref{update}. In particular, we see that the sub-level set ${\cal A}$ now takes a symmetric shape.

\begin{figure}[h]
\begin{center}
\includegraphics[scale=0.45]{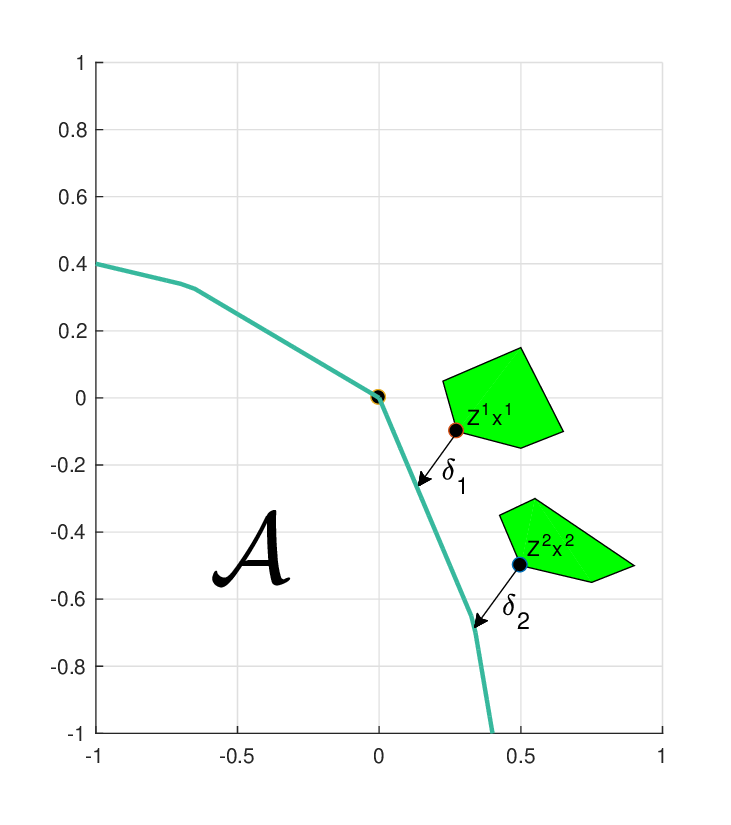}\caption{The update of the set ${\cal A}= \left\{\vec{Z}\;\middle |\; \bars{\rho}_{\delta}(\vec{Z}) \leq 0\right\}$ in Figure 1: with $\bars{\rho}_{\delta}$ now further satisfying permutation-invariance}
\label{update2}
\end{center}
\end{figure}

However, there is also the difficulty of handling the size of the inverse problems, which grows exponentially with respect to the input data of $\vec{X}_j$ because of the need to take into account all the permutations (e.g. $\sigma(\vec{X}_j)$, $\forall \sigma \in \Sigma$). We detail also in Appendix \ref{ssec03} how to reduce the problems to programs that grow only polynomially in the size of the input data (e.g., $|{\cal J}|$).

\begin{proposition} \label{prop_final}
Given that Assumption \ref{asperm} holds and that the set of optimal solutions is non-empty, the inverse problem (\ref{uniforminv})--(\ref{inv2}) with ${\cal R}:=\bars{{\cal R}}$ can be solved by a risk function $\bars{\rho}_{\delta} \in \bars{{\cal L}}(\{\vec{X}_j\}_{j \in {\cal J}}, \bars{{\cal C}})$, where $\bars{{\cal C}}$ is the support set of the reference risk function $\tilde{\rho}$ and the parameter $\delta$ is calculated by solving 
\begin{eqnarray}
\min_{\delta,y_{j},v_{i,j},w_{i,j}} && \max_{j \in {\cal J}} |\delta_j - \tilde{\rho}(\vec{X}_j)| \label{modd} \\
 {\rm subject\; to}&& \vec{1}^{\top}v_{i,j}+\vec{1}^{\top}w_{i,j}\leq\delta_{i}-\delta_{j}+y_{j}^{\top}\vec{X}_{j},\;\;\; \forall j \in {\cal J},\;\forall i\neq j, \label{mod2} \\
 && \vec{X}_{i}y_{j}^{\top}-v_{i,j}\vec{1}^{\top}-\vec{1}w_{i,j}^{\top}\leq 0,\;\;\; \forall  j \in {\cal J},\; \forall i\neq j, \nonumber \\
 && y_j \in \bars{{\cal C}}, \;\;\; \forall j \in {\cal J}, \nonumber \\
 && y_{t}^{\top}\vec{X}_{t}\leq h_t(y_{t}),\;\;\; \forall t \in {\cal T}, \label{opttt} \\
 && \delta_{i}\leq\delta_{j},\;\;\; \forall(i,j)\in{\cal B}, \nonumber
\end{eqnarray}
where $\delta \in \Re^{|{\cal J}|}$, $y_{j}\in\Re^{|\Omega|}$, $v_{i,j}\in\Re^{|\Omega|}$, $w_{i,j}\in\Re^{|\Omega|}$, the set ${\cal B}:=\left\{(i,j)\in\{1,2,...,{\cal J}\}^{2}\;\middle |\;(\vec{X}_{i},\vec{X}_{j})\in\{(\vec{L}_{k},\vec{U}_{k})\}_{k\in {\cal K}}\right\}$, 
and $h_t$ denotes the function $h_t(y):=\min_{x}\left\{y^{\top}\vec{Z}^t(x) \;\middle |\;x\in{\cal X}^{t}\right\}$. 

Moreover, the supremum representation in $\bars{{\cal L}}(\{\vec{X}_j\}_{j \in {\cal J}}, \bars{{\cal C}})$ can be reduced to 
\begin{eqnarray}
 \sup_{p \in \Re^{|\Omega|}, t \in \Re, v_j \in \Re^{|\Omega|}, w_j \in \Re^{|\Omega|}}     && p^\top \vec{Z} - t  \label{mod111}\\
{\rm subject \ to}    && \vec{1}^{\top}v_j+\vec{1}^{\top}w_j \leq t +  \delta_j,\;\;\; \forall j\in {\cal J},   \nonumber\\
                              && \vec{X_j}p^{\top}-v_j\vec{1}^{\top}-\vec{1}w_j^{\top}\leq 0,\;\;\; \forall  j\in {\cal J},  \nonumber\\
                              && p \in \bars{{\cal C}}. \nonumber
\end{eqnarray}

\end{proposition}

The fact that the size of the above programs \eqref{modd} and \eqref{mod111} grows only polynomially, rather than exponentially, with respect to the input data should provide a strong incentive to consider applying the above models. In particular, we should emphasize here that the above models offer an opportunity to learn a decision maker's risk function potentially much faster, in terms of requiring fewer observations of past decisions to reach certain learning performance, than the models presented in the previous section. Indeed, recall from the representation of the risk function $\bars{\rho}_{\delta}$ that for each random loss $\vec{X}_j$ chosen by an observed decision, the representation will automatically incorporate exponentially many more points (i.e., all the permutations of $\vec{X}_j$) and carry over the preference relations learned from $\vec{X}_j$ to all its permutations. In other words, the above models allow for incorporating ``exponentially" more preference information with the ``cost" of taking only polynomially longer time to solve the models. This should stress the importance of always checking first whether a decision maker is sensitive or not to the ordering (i.e., $\sigma(Z)$ for different $\sigma$). Following the same analysis, we can derive similar results for the inverse problems (\ref{eq:inv2})--(\ref{eq:inv3}). For brevity, we defer them to Appendix \ref{ssec02}, Corollary \ref{inv3a} and \ref{inv3b}.

\subsection{Imputing law-invariant risk-averse functions} \label{sec33}
As mentioned in Section \ref{sec21}, imposing the condition of law invariance on a risk-averse function is equivalent to considering it as a function of distributions. That is, we necessarily assume by default that the distributions of all random variables are available, as discussed also in Example \ref{exx3}, Section \ref{sec21}. For this reason, we should start by assuming that all elements involved in the inverse problems are distribution-based.

\begin{assumption} \label{ass_obj}
Each entry of the random loss  $\vec{Z}^t(x)$ admits the form of $(\vec{Z}^t(x))_i = Z(x,\xi^t (\omega_i))$, where $\xi^t : \Omega \rightarrow \Re^m$. The random vector $\xi^t$ has finite support $\left\{\xi^t_1,...,\xi^t_{\tau_0^t}\right\}$ and a probability distribution $F_{\xi^t}$ that satisfies $\mathbb{P}(\xi^t = \xi^t_o) = \bar{p}_o^{\xi^t}$ for $o=1,...,\tau_0^t$. \end{assumption}

We also make the following assumption about the reference risk function $\tilde{\rho}$.
\begin{assumption} \label{ref_law}
The reference risk function $\tilde{\rho}$ is law invariant (i.e., $\tilde{\rho} \in {\cal R}_F$).
\end{assumption}

The key to solving the inverse problems that account for distributions lies in identifying the connection between the condition of law invariance and permutation invariance discussed in the previous section. Namely, supposing that the probability measure $\mathbb{P}$ is uniform, one can observe that for any two random losses $Z_1$, $Z_2$ that share the same distribution, their vector representations
$\vec{Z}_{1}$, $\vec{Z}_{2}\in\Re^{|\Omega|}$ must satisfy $\vec{Z}_{1}=\sigma(\vec{Z}_{2})$
for some $\sigma\in\Sigma$. That is, in the case of uniform probability measure, a law-invariant risk measure must satisfy $\rho \in \bars{{\cal R}}$. To exploit this fact further, we make the following mild assumption.
\begin{assumption} \label{rational} 
All probability distributions of random losses take rational numbers as probability values.
\end{assumption}

In this case, given any discrete probability distribution $F_Z$ specified by a pair of support and probability vector $(\vec{S},\bar{p}) \in \Re^{\tau} \times \Re^{\tau}$ (i.e., 
$F_Z= \sum_{o=1}^{\tau} \bar{p}_o {\bf Dirac}((\vec{S})_o)$, where ${\bf Dirac}$ is the Dirac measure with all its weight on $(\vec{S})_o$), one can always equivalently express the probability value $\bar{p}_o$, $o=1,...,\tau$ by a ratio $n_{o}/M$, $n_o\in \{1,...,M\}$ for some $M\in\mathbb{Z}^{+}$. The random loss $Z \sim F_Z$ can thus be equivalently defined as a mapping from an outcome space $\Omega$ with $M$ uniformly distributed outcomes to $\Re$ that satisfies $Z(\omega) \in \left\{(\vec{S})_1,...,(\vec{S})_{\tau} \right\}$ and $| \left\{ \omega \in \Omega\; \middle |\;Z(\omega)=(\vec{S})_o \right\}| = \bar{p}_o M$, $ o=1,...,\tau$. However, it might be costly to implement such a procedure because the constant $M$ might need to be large and thus significantly increases the size of the optimization problems (\ref{mod2}) and (\ref{mod111}). In the following proposition, we show that the optimization problems can always be further reduced to programs whose sizes depend (almost) only on the size of the supports of distributions (i.e., $|\text{supp}(F_Z)| = \tau$), rather than the size of the outcome space (i.e., $M$). The proof is deferred to Appendix \ref{ssec03}.  

\begin{proposition} \label{lastpros}
Let $\{F_j\}_{j \in {\cal J}}$ be the distributions of the random losses in the support set $\{\vec{X}_{j}\}_{j\in {\cal J}}$, and each distribution $F_j$ be specified by a pair of $(\vec{S}_j,  \bar{p}^j) \in \Re^{\tau_j} \times \Re^{\tau_j}$ such that $F_j = \sum_{o=1}^{\tau_j} \bar{p}_o^j {\bf Dirac}((\vec{S}_j)_o)$. Given that Assumption \ref{ass_obj}, \ref{ref_law}, and \ref{rational} hold and that the set of optimal solutions is non-empty, the inverse problem (\ref{uniforminv})--(\ref{inv2}) with ${\cal R}_{F}$ can be solved by a risk function $\rho_{\delta}^F \in {\cal L}_{F}(\{F_j\}_{j \in {\cal J}}, {\cal C}_0)$, where 
$$  {\cal L}_{F}(\{F_j\}_{j \in {\cal J}}, {\cal C}_0):= 
\left\{ \rho_{\delta}^F\;  \middle | \; \exists \delta \in \mathbb{\R}^{|{\cal J}|},\; 
\rho_{\delta}^F(F_Z) =  (\ref{pro6law})  \right\}, $$
\begin{eqnarray}
     \sup_{p \in \Re^{\tau}, v_j \in \Re^{\tau_j}, w_j \in \Re^{\tau}, t \in \Re}  && 
     p^\top \vec{S} - t
    \label{pro6law}\\
     {\rm subject \ to} && \vec{1}^{\top}v_j+\vec{1}^{\top}w_j \leq t +  \delta_j,\;\;\; \forall  j \in {\cal J} , \nonumber \\
                                && \vec{S_j}p^{\top}-\Lambda_j \circ (v_j \vec{1}^{\top})-\vec{1}w_j^{\top}\leq 0, \;\;\; \forall  j \in {\cal J}, \nonumber \\
                                && p \in {\cal C}_0 \subseteq \Re^{\tau}_+, \nonumber                            
\end{eqnarray}
$\circ$ is the Hadamard product; the coefficient $\Lambda_j$ is calculated by $(\Lambda_j)_{m,n} = \bar{p}_n/\bar{p}^j_m$, $n=1,...,\tau$, $m=1,...,\tau_j$; 
and the parameter $\delta$ is calculated by solving the following optimization problem
\begin{eqnarray}
\min_{\delta,y_{j},v_{i,j},w_{i,j}} && \max_{j \in {\cal J}} |\delta_j - \tilde{\rho}(\vec{X}_j)| \nonumber \\
{\rm subject\;to} && \vec{1}^{\top}v_{i,j}+\vec{1}^{\top}w_{i,j}\leq\delta_{i}-\delta_{j}+y_{j}^{\top}\vec{S}_{j},\;\;\; \forall  j \in {\cal J},\; \forall i\neq j,\label{refff} \\
 && \vec{S}_{i}y_{j}^{\top}-\Lambda_{i,j} \circ (v_{i,j}\vec{1}^{\top})-\vec{1}w_{i,j}^{\top}\leq0 , \;\;\; \forall j \in {\cal J},\;   \forall i\neq j, \label{refff3} \\
 && y_j \in {\cal C}_j \subseteq \Re^{\tau_j}_+ ,\;\;\; \forall j \in {\cal J}, \nonumber \\
 && y_{t}^{\top}\vec{S}_t \leq h_t(y_{t}),\;\;\; \forall t \in {\cal T}, \label{refff2}\\
 && \delta_{i}\leq\delta_{j} ,\;\;\; \forall(i,j)\in{\cal B}, \nonumber
\end{eqnarray}
where $\delta \in \Re^{|{\cal J}|}$, $y_j \in \Re^{\tau_j}$, $v_{i,j} \in \Re^{\tau_i}$, $w_{i,j} \in \Re^{\tau_j}$; $\vec{S}_t:=(Z(x^{t},\xi^t_1),...,Z(x^{t},\xi^t_{\tau_0^t}))^\top$; the set ${\cal B}:= \left\{(i,j)\in\{1,2,...,{\cal J}\}^{2}\; \middle |\;(\vec{X}_{i},\vec{X}_{j})\in\{(\vec{L}_{k},\vec{U}_{k})\}_{k\in {\cal K}}\right\}$; 
and $h_t$ denotes the function $h_t(y):=\min_{x} \left\{\sum_{o=1}^{\tau_0^t} y_o Z(x,\xi^t_o) \; \middle |\;x\in{\cal X}^{t} \right\}$. The coefficient $\Lambda_{i,j}$ is calculated by $(\Lambda_{i,j})_{m,n} = \bar{p}_n^{j}/\bar{p}^i_m$, $n=1,...,\tau_j$, $m=1,...,\tau_i$.

Moreover, the above set ${\cal C}_j$, $j \in \{0\} \cup {\cal J}$ can be derived from the set $\bars{{\cal C}}$ (i.e., the support set of the reference risk function $\tilde{\rho}$ in the case where $\mathbb{P}$ is uniform) using 
\begin{equation}
 {\cal C}_j= \left\{ y \; \middle |\; {\cal H}_{F_j}((\lambda_{F_j})^{-1} \circ y) \in \bars{{\cal C}}\right\}, \label{cset}
\end{equation}
where $F_0:= F_Z$, $y \in \Re^{\tau_j}$, $\lambda_{F_j} := (\bar{p}^j_{1}|\Omega|,..., \bar{p}^j_{\tau_j}|\Omega|)^\top$ and $(\lambda_{F_j})^{-1} \circ \lambda_{F_j} = \vec{1}$, and 
${\cal H}_{F_j} : \Re^{\tau_j} \rightarrow \Re^{|\Omega|}$  stands for an operator associated with $F_j$ 
that generates a vector in $\Re^{|\Omega|}$ from a vector in the dimension of $|\text{supp}(F_j)|$. Specifically, it 
replicates each entry $\tilde{y}_o$ of a given vector $(\tilde{y}_1,...,\tilde{y}_{\tau_j})^\top \in \Re^{\tau_j}$ by $\bar{p}^j_o|\Omega|$ many times, where we denote the replications by $\vec{y}_o$, and generates a vector $(Y(\omega_1),...,Y(\omega_{|\Omega|}))$ in $\Re^{|\Omega|}$ by concatenating the replication vectors (i.e., $(\vec{y}_1^\top,...,\vec{y}_{\tau_j}^\top):= (\tilde{y}_1,...,\tilde{y}_1,\tilde{y}_2,...,\tilde{y}_2,...,\tilde{y}_{\tau_j},...,\tilde{y}_{\tau_j})$).
\end{proposition}

It is not hard to confirm that the convex programs presented above are similar to those presented in the previous section (see Proposition \ref{prop_final}). In particular, the programs here would recover the ones presented in Proposition \ref{prop_final}  if we set the size of the support to be the same across all the distributions and assign equal weight to each support. In other words, there is simply more freedom in these programs to describe distributions in terms of both support and probabilities. More importantly, this extra freedom adds no additional computational burden (in the sense that incorporating this information only affects the parameter values, but not the size, of the above convex programs). Thus, this may provide a further reason to encourage the decision maker to disclose the probability information (see Remark \ref{remm1}).
Similar results for the inverse problems (\ref{eq:inv2})--(\ref{eq:inv3}) can be found in Appendix \ref{ssec02}, Corollary \ref{inv4a} and Corollary \ref{inv4b}. 

It is the complexity of the set ${\cal C}_j$ that appears to remain dependent on the size of the outcome space $|\Omega|$. As shown in the examples below, such a dependency can often be removed by considering more specifically the exact form of the reference risk function $\tilde{\rho}$. This would thus allow the whole problem in Proposition \ref{lastpros} to be recast independently from the exact construction of the sample space. In particular, we consider below the implementation of several popular risk measures as the reference risk function $\tilde{\rho}$ and derive their corresponding set ${\cal C}_j$ using the relation \eqref{cset}. The distribution 
$F_j$ here is generally expressed by $F_j = \sum_{o=1}^{\tau_j} \bar{p}_o {\bf Dirac}((\vec{S})_o)$. The support set $\bars{{\cal C}}$ required in \eqref{cset} can be identified through the dual representation of these risk measures (see Appendix \ref{ssec01}) with the use of a uniform probability measure (i.e., $\mathbb{P}(\{\omega_i\}) =\frac{1}{M},\; \forall \omega_i \in \Omega$).
\vspace{0.1in}

\begin{example} ({\bf Risk measures and their corresponding set of support ${\cal C}_j$}) \label{spec2}
\begin{enumerate}
\item ({\bf Maximum loss}) The risk function is defined by $\tilde{\rho}(Z)=\max_{i}\{Z(\omega_{i})\}$,
and its corresponding set of support ${\cal C}_{j}$ is simply
\[
{\cal C}_j= \left\{q \in \mathbb{R}^{\tau_j}_+\;\middle |\; q^\top \vec{1}=1\right\}.
\]
\item ({\bf Expectation}) The risk function $\tilde{\rho}(Z)=\mathbb{E}[Z]$
has the following set of support: 
\[
{\cal C}_j= \left\{q \in \mathbb{R}^{\tau_j}_+ \;\middle |\; q = \bar{p}\right\}.
\]
\item ({\bf Mean-upper-semideviation}) The risk function is defined by 
\[
\tilde{\rho}(Z)=\mathbb{E}[Z]+\gamma\mathbb{E}[|Z-\mathbb{E}[Z]|],\;\gamma\in[0,\frac{1}{2}],
\]
and its set of support takes the form
\[{\cal C}_j= \left\{q \in \mathbb{R}^{\tau_j}_+ \;\middle |\; q_o= \bar{p}_o(1 + \gamma(h_o - \sum_{o=1}^{\tau_j} \bar{p}_o h_o)),\; o=1,...,\tau_j,\; \sum_{o=1}^{\tau_j} \bar{p}_o |h_o|^{t} \leq 1, \;h \geq 0\right\}.\]
\item ({\bf Conditional value-at-risk (CVaR)}) The risk function can be defined
by 
\[
\tilde{\rho}(Z)= \frac{1}{1-\alpha} \int_{\alpha}^1 F_{Z}^{-1}(t)dt, 
\]
where $F_Z^{-1}$ stands for the generalized inverse distribution function, and its corresponding set of support admits
\[
{\cal C}_j = \left\{q \in \mathbb{R}^{\tau_j}_+ \;\middle |\; q_o \leq \frac{1}{1-\alpha}\bar{p}_o, \;o=1,...,\tau_j,\; q^\top \vec{1}=1\right\}.
\]
\item ({\bf Spectral risk measures}) As a generalization of CVaR, the risk function
is defined by 
\[
\tilde{\rho}(Z)=\int_{0}^{1}F_{Z}^{-1}(t)\phi(t)dt,
\]
where the function $\phi : [0, 1] \rightarrow [0, 1]$ is non-decreasing and satisfies $\int_{0}^{1}\phi(t)dt=1$. The function $\phi$ is also known as the risk spectrum.
We should note that it is not possible to derive the reduced set ${\cal C}_j$ that is independent of the sample space $\Omega$ for general spectral risk measures because one can always seek a more ``detailed" spectrum $\phi$ by increasing the size of the sample space. Even so, for practical purposes, a ``step-wise" spectrum, $\phi^-(p):=\sum_{k=1}^{K}\bar{\phi}_{k}{\bf 1}_{(p_{k-1},p_{k}]}(p)$ for some $0<\bar{\phi}_{1}<\cdots<\bar{\phi}_{K}$ and $0=p_{0}<p_{1}<\cdots<p_{K}=1$, is usually sufficient, which can approximate any general spectrum to a pre-determined precision. By assuming that $p_k$ takes rational numbers as values, we prove at the end of Appendix \ref{ssec03} that, based on the representation $\bars{{\cal C}} = \text{Conv}(\left\{ \sigma(\bar{\phi}),\; \sigma \in \Sigma\right\})$ from Example \ref{specex}, the set of support ${\cal C}_j $ can be reduced to 
\[
 {\cal C}_j =\left\{ q \;\middle |\; q = \bar{Q}\bar{\phi},\; \bar{Q}\vec{1} = \bar{p}, \; \bar{Q}^\top \vec{1} = p^{\phi},\; \bar{Q}\geq 0\right \},
\]
where $\bar{Q} \in \Re^{\tau_j \times K}$ and $(p^{\phi})_k :=  (p_{k}-p_{k-1})$, $k=1,...,K$.
\end{enumerate}
\end{example}

\section{Solving the Inverse Problem in Decision Space} \label{sec4}
There is a distinct difference, from the complexity point of view, between the inverse problem (\ref{eq:inv4}) and the inverse problems addressed in the previous section. In particular, in this setting both the risk function $\rho$ and the decision variable $x$ are variables and the problem becomes non-convex even in the simplest case where $\rho$ is linear (i.e., $\rho(\vec{Z}) = y^\top \vec{Z}$ for some unknown $y$). The goal of this section is to discover cases where the problem can be solved in a relatively efficient way, by which we mean from a practical standpoint. In particular, we assume the following structure for the problem. Note that for simplicity we suppress the index $T$ in $\vec{Z}^T(x)$ and ${\cal X}^T$ in the rest of this section. 

\begin{assumption} \label{assi4}
The loss function $\vec{Z}(x)$ is linear; namely, it generally admits $\vec{Z}(x) = Zx$ for some $Z \in \Re^{|\Omega| \times n}$, and in the case of law invariance it has finite support $\{\xi_o^\top x\}_{o=1}^{\tau_0}$ and a probability distribution $F_{\xi}$ that satisfies $\mathbb{P}(\xi = \xi_o) = \bar{p}_o^{\xi}$ for $o=1,...,\tau_0$. In addition, the feasible set ${\cal X}$ is a non-empty, bounded full-dimensional polytope that takes the form ${\cal X}:= \left\{ x\; \middle |\; Ax \geq b \right\}$ for some $A \in \Re^{N \times n}$ and $b\in \Re^N$.
\end{assumption}

We show in the following proposition that the inverse problem (\ref{eq:inv4}) reduces to solving an inverse linear optimization problem.

\begin{proposition} \label{prodec}
In the case where Assumption \ref{asss1} and \ref{assi4} hold, the inverse problem (\ref{eq:inv4}) can be solved by a risk function $\rho_{\delta} \in {\cal L}(\{\vec{X}_{j}\}_{j\in \{1,2\}}, {\cal C})$, where ${\cal C}$ is the support set of the reference risk function $\tilde{\rho}$,  and $\vec{X}_1 = Zx^*$, $\delta_1 = {y^*}^\top Zx^*$, $\vec{X}_2 = \vec{0}$, $\delta_2 = 0$. The values of $x^*$ and $y^*$ are calculated by solving 
\begin{eqnarray}
\min_{x' \in \mathbb{R}^n ,y \in \mathbb{R}^{|\Omega|}} && || x' - x^T || \nonumber\\
{\rm subject\;to} && x' \in \arg\min_x  \left\{ y^\top Zx \;\middle |\; Ax \geq b \right\}, \label{invlin}\\
 && y \in {\cal C}. \nonumber
\end{eqnarray}
Moreover, the above problem can be solved by the following mixed-integer program (MIP):
\begin{subequations}
\begin{eqnarray}
\min_{x' \in \mathbb{R}^n ,y \in \mathbb{R}^{|\Omega|}, u \in \mathbb{R}^{N}, \eta \in \mathbb{R}^{N}} && || x' - x^T || \nonumber\\
{\rm subject\; to} && Ax' - b \geq 0, \label{dec0} \\
 && A^\top u = Z^\top y,  \label{dec1}\\
 && Ax' - b \leq M \eta, \label{dec2}\\
 && u \leq M(1-\eta), \label{dec3} \\
 && u \geq 0,\; y \in {\cal C},\;  \eta \in \{0,1\}^N, \label{dec4}
\end{eqnarray}
\end{subequations}
for a sufficiently large $M$. 
\end{proposition}

The problem (\ref{invlin}) has also been studied in the context of inverse linear optimization, and it is known that it may be solved in closed-form in the special case where $y$ is unconstrained and the rank of $Z$ equals the dimension of $x$ (\cite{ChanLeeTe2018}). Unfortunately, the problem in the general form (\ref{invlin}) is known to be difficult to solve, as the set of feasible solutions to (\ref{invlin}) is non-convex. Nevertheless, the MIP program that we present above can be very relevant from a practical standpoint. In particular, in the case where the 1-norm or $\infty$-norm is applied in the objective function, this class of programs can often be solved efficiently as mixed-integer linear programs (MILPs) on a large scale using commercial solvers such as Gurobi or Cplex. 

Next, we consider the case where $\rho$ satisfies permutation-invariance. Because we must take into account all possible permutations, the inverse problem (\ref{eq:inv4}) in this case is more involved and cannot be reduced as cleanly as (\ref{invlin}). Nevertheless, we show in the following proposition that it remains possible to solve the problem through a MIP program where the number of binary variables grows in the order of $O(|\Omega|^2)$. 
 
\begin{proposition} \label{perminv}
In the case where Assumption \ref{asperm} and \ref{assi4} hold, the inverse problem (\ref{eq:inv4}) can be solved by a risk function $\bars{\rho}_{\delta} \in \bars{{\cal L}}(\{\vec{X}_{j}\}_{j\in \{1,2\}}, \bars{{\cal C}})$, where $\bars{{\cal C}}$ is the support set of the reference risk function $\tilde{\rho}$  and $\vec{X}_1 = Zx^*$, $\delta_1 = {y^*}^\top Zx^*$, $\vec{X}_2 = \vec{0}$, and $\delta_2 = 0$. The values of $x^*$ and $y^*$ are calculated by solving the following mixed-integer program (MIP):  
\begin{subequations}
\begin{eqnarray}
\min_{x' \in \mathbb{R}^n ,y \in \mathbb{R}^{|\Omega|}, u \in \mathbb{R}^{N}, \eta \in \mathbb{R}^N,\nu_{ij}} && ||x'-x^{T}|| \nonumber\\
{\rm subject\;to\;} && (Zx')_{i}\leq(Zx')_{j}+M\nu_{ij}, \;\;\;\forall i,j \in \{1,...,|\Omega|\}, \;
i\neq j,  \label{invperm1} \\
 && (Zx')_{j}\leq(Zx')_{i}+M(1-\nu_{ij}),\;\;\; \forall i,j \in \{1,...,|\Omega|\},\;  i\neq j  , \label{invperm2} \\
 &&y_{i}\leq y_{j}+M \nu_{ij},\;\;\; \forall i,j \in \{1,...,|\Omega|\}, \;i\neq j    ,            \label{invperm}\\
 && y_{j}\leq y_{i}+M(1-\nu_{ij}),\;\;\;  \forall i,j \in \{1,...,|\Omega|\}, \; i\neq j     ,     \label{invperm4}\\
 && Ax' - b \geq 0, \nonumber \\
 && A^{\top}u=Z^{\top}y , \nonumber\\
 && Ax'-b\leq M\eta , \nonumber\\
 && u\leq M(1-\eta) , \nonumber\\
 && u\geq0,\; y\in \bars{{\cal C}}, \; \nu_{ij} \in\{0,1\},\; \eta\in\{0,1\}^N,  \nonumber 
\end{eqnarray}
\end{subequations}
for a sufficiently large $M$. 
\end{proposition}

As detailed in Appendix \ref{ssec03}, by carefully examining constraints that take into account all the permutations, the constraints boil down to requiring an ordering matching condition between $Zx'$ and subgradient $y$ (i.e., $(Zx')_{i'} \leq \cdots \leq (Zx')_{i''} \Leftrightarrow y_{i'} \leq \cdots \leq y_{i''}$). We can equivalently state this condition by the first four constraints (\ref{invperm1})--(\ref{invperm4}) using binary variables. To address the case of law invariance, we employ the same technique used in Section \ref{sec33} to first map the distribution $F_{\xi^\top x}$ to a random variable $Zx \in \mathbb{R}^{|\Omega|}$ over an outcome space $\Omega$ with a uniform probability measure. Based on this constructed random variable, we can then apply the previous proposition to formulate a MIP program. We show in the following proposition how the program can be further reduced to a program depending only on the size of the support $\{\xi_o^\top x\}_{o=1}^{\tau_0}$. 

\begin{proposition} \label{perminv2}
In the case where Assumption \ref{ref_law}, \ref{rational}, and \ref{assi4} hold, the inverse problem (\ref{eq:inv4}) can be solved by a risk function $\rho_{\delta}^{F} \in {\cal L}_{F}(\{F_{j}\}_{j\in \{1,2\}}, {\cal C}_{0})$, where 
$F_1= \sum_{o=1}^{\tau_0} \bar{p}_o^{\xi} {\bf Dirac}((\xi_o^\top x^*)_o)$, $\delta_1 = {y^*}^\top \Xi x^*$, $F_2= {\bf Dirac}(0)$, $\delta_2 = 0$, and $\Xi : = [\xi_1, \cdots, \xi_{\tau_0}]^\top$. The values of $x^*$ and $y^*$ are calculated by solving the following mixed-integer program (MIP):  
\begin{eqnarray}
\min_{x' \in \mathbb{R}^n ,y \in \mathbb{R}^{\tau_0}, u \in \mathbb{R}^{N}, \eta \in \mathbb{R}^N,\nu_{ij}} && ||x'-x^{T}|| \nonumber\\
{\rm subject\;to\;} && \xi_{i}^\top x' \leq \xi_{j}^\top x'+M\nu_{ij}, \;\;\;\forall i,j \in \{1,...,\tau_0\},\; i\neq j  , \nonumber \\
 && \xi_{j}^\top x' \leq \xi_{i}^\top x' +M(1-\nu_{ij}),\;\;\;\forall i,j \in \{1,...,\tau_0\},\; i\neq j  ,\nonumber\\
 && y_{i}\leq y_{j}+M\nu_{ij},\;\;\; \forall i,j \in \{1,...,\tau_0\},\; i\neq j ,\label{lawperm}\\
 && y_{j}\leq y_{i}+M(1-\nu_{ij}) ,\;\;\; \forall i,j \in \{1,...,\tau_0\},\; i\neq j , \label{lawperm2}\\
 && Ax'-b \geq 0, \nonumber \\
 && A^{\top}u=  \Xi^\top ( \bar{p}^{\xi} \circ y), \nonumber \\
 && Ax'-b\leq M\eta  ,\nonumber\\
 && u\leq M(1-\eta) , \nonumber\\
 && u\geq0,\; y\in\mathbb {\cal C}_1,\;  \nu_{ij} \in \{0,1\},\; \eta\in\{0,1\}^N, \nonumber 
\end{eqnarray}
for a sufficiently large $M$. 

Moreover, the set ${\cal C}_0$ can be derived from \eqref{cset} and the set ${\cal C}_1$ can be derived from ${\cal C}_1:= \left\{ y \; \middle |\; {\cal H}_{F_1}((\frac{1}{|\Omega|})y) \in \bars{{\cal C}}\right\}$,  where $\bars{{\cal C}}$ is the support set of the reference risk function $\tilde{\rho}$ in the case of uniform probability measure.
\end{proposition}

All the MIP programs presented above could be solved in seconds in our experiments conducted in the next section. It is natural to consider extending the inverse problem (\ref{eq:inv4}) more generally to incorporate multiple observations (or preference elicitation relations). Unfortunately, not only do these more general cases quickly become intractable to analyze, but it also appears unreasonable to assume that their solutions admit any particular structure as observed in the above propositions. To solve these problems directly, one faces the complexity of bilinear constraints that are highly interdependent, which is computationally intractable even on a small scale.

Although it appears not possible to solve the problems in general, we believe the true value of the above MIP programs lies in providing a reasonable means to ``extrapolate" an optimal decision from the observed sub-optimal decision. What we meant by extrapolation here is that the decision is extrapolated from the optimality condition of some risk-averse function over the set of feasible decisions. This guarantee should make the extrapolated decision a more ideal candidate to incorporate in the inverse models than the original observed decision. Hence, we recommend applying the MIP programs in either one of the following two ways when there is a need to incorporate more observations:

\begin{enumerate}
\item First, solve the MIP programs based on the latest observation of decision $x^T$. Obviously,  if the observed decision $x^T$ is already optimal, then the MIP programs will return $x^T$. Otherwise, replace $x^T$ by the solution $x^{*}$ generated from the programs and then run the inverse model (\ref{eq:inv2}) discussed in the previous section by setting the optimality condition associated with $x^{*}$ to be tight (i.e, $\gamma_T=0$). 

\item Same as above, but run the inverse model (\ref{eq:inv2}) without imposing $\gamma_T=0$. 
\end{enumerate}

Based on the results in the above propositions, in either case there must exist at least one feasible risk function (even in case 1) that renders the decision $x^*$ optimal. The difference is that the first approach essentially puts absolute priority on the fitting to the decision $x^{*}$, whereas the second approach treats the sub-optimality of each observed decision equally important to improve. Thus, if one's main interest is to fit perfectly the decision (based on most current information), one can apply the first approach. Otherwise, if the interest is to seek a potentially better decision as the input to the other inverse models, one may apply the second approach.  

\section{Numerical Study} \label{numerical}
In this section, we illustrate the use of inverse optimization on
a portfolio selection problem. We simulate the situation where a fund
manager is required to construct a portfolio that aligns with a client's
personal preference but has fairly limited opportunity to assess
the client's risk preference. We assume that the client's true risk function satisfies the conditions of monotonicity, convexity, and translation invariance and that his/her past investments were made according to the following forward risk minimization problem: 
\[
\min_{x} \left\{\rho(-\sum_{i}x_{i}\vec{R}_{i})\;\middle |\;\vec{1}^{\top}x=1,\;x\geq0\right\},
\]
where $\vec{R}_{i}\in\Re^{|\Omega|}$ denotes the random returns of
an asset $i$ and $x_{i}$ stands for the proportion of the total
wealth invested in the asset $i$. The non-negativity constraint $x\geq0$
assumes that the client considers only long positions.

\begin{remark} \label{rm4}
Although throughout this paper we have assumed the ability to specify the constraints that define the forward problem, we should mention here a few words about the possibility that the constraints may be misspecified. For instance, although the long-only constraint is common in the practice of portfolio management, at times the client may actually be open to taking some short positions without the manager's awareness. The question then arises as to how well in this case our inverse models capture the true risk function (by assuming the long-only constraint in the forward problem) and if it is possible to detect potential misspecification. As discussed with more details in Appendix \ref{apee}, we find it generally not possible to conclude, solely based on observed decisions, if there is any constraint misspecification. However, even without this confirmation, we discuss in the appendix why and how our inverse models may still be effective in capturing the true risk function.
\end{remark}

In Section \ref{51}, we demonstrate the case with limited observations of made decisions. In Section \ref{52}, we consider more generally the cases where multiple observations are available. All computations are carried out in Matlab 2014a using GUROBI 5.0 as an optimization solver. YALMIP (\cite{Lofberg:2004aa}) is used to implement our models in Matlab.

\subsection{The case of single observation} \label{51}
In this section we first consider the case of small $|{\cal T}|$, particularly $|{\cal T}|=1$. We assume in all experiments that the client's true risk preference is captured by the risk measure, optimized-certainty-equivalent (OCE), which was first introduced by \cite{Ben-Tal:2007aa}. This class of risk measures was widely referred to in both the literature of optimization (e.g., \cite{Natarajan:2010aa}) and risk theory (e.g., \cite{Drapeau:2013aa}), given its generality and its one-to-one correspondence to a (dis-)utility function.  Namely, it is defined as 
\[
\rho_{OCE}(\vec{Z}):=\inf_{t \in \Re} \left\{t+\mathbb{E}[u(\vec{Z}-t)] \right\},
\]
where the function $u: \Re \rightarrow [-\infty, \infty)$ stands for a proper, closed, convex, and nondecreasing disutility function that satisfies $u(0)=0$ and $\partial u(0) \ni 1$ and $\partial u$ denotes the subdifferential of $u$. We assume that the client's risk preference is captured by the OCE risk measure, denoted by $\rho_{OCE}^s(\vec{Z})$, with the exponential disutility function $u_s(x):=\frac{1}{s}(e^{sx}-1)$, where $s$ is a parameter that controls the level of risk aversion. This class of disutility function is fairly standard in the literature (see, e.g., \cite{Natarajan:2010aa}). 
 
We assume further that one chooses a reference risk function $\tilde{\rho}$ by following the safety-first principle but is not fully ignorant of the upside of uncertain outcomes. Specifically, the reference risk function takes the following form of spectral risk measure, where a CVaR-$90\%$ is chosen to capture downside risk and a small weight $\lambda = 0.2$ is put on the average:
\[
\rho_{Spec}(\vec{Z}):=\lambda\mathbb{E}[\vec{Z}]+(1-\lambda)\rho_{CVaR-90\%}(\vec{Z}).
\]

We ran our experiments against the dataset of daily historical
returns from 335 companies that are part of the S\&P500 index during
the period from January 1997 to November 2013. We conducted 5000 experiments, each consisting of randomly choosing a time window of
60 days and 5 stocks from the 335 companies. The first thirty days of data were used for in-sample calculation, whereas the second thirty days were for out-of-sample evaluation. 

The following steps were taken to simulate how
the manager may employ imputed risk functions. First, to simulate
the past investment, we solved the forward problem based on the OCE
risk measure $\rho_{OCE}^{s}$ with different choices of the parameter
$s$. Then, we fed the obtained portfolio $x_{OCE}^{s}$ together
with the pre-specified spectral risk measure $\rho_{Spec}$ into the
model in Proposition \ref{lastpros} to generate an imputed convex risk function
$\rho_{IC}^{s}$. Finally, we solved the forward risk minimization
problem again based on the imputed risk function $\rho_{IC}^{s}$
to obtain a portfolio $x_{IC}^{s}$. Note that in solving the forward problem, it is possible that the optimal portfolio may not be unique. Throughout our experiments, we added a regularization term $\hat{\lambda} ||x||_2$ to the objective function with a small weight $\hat{\lambda} = 10^{-6}$ so as to ensure the uniqueness of the optimal solution. Intuitively, we looked for the most diversified portfolio among the optimal portfolios, as the L2 norm is known to encourage the diversification (\cite{DeMiguel:2009aa}).

We compared both in-sample and out-of-sample performances of the portfolios $x_{OCE}^{s}$,
$x_{Spec}$, and $x_{IC}^{s}$ optimized respectively based on the
OCE risk measures $\rho_{OCE}^{s}$, the spectral risk measure $\rho_{Spec}$,
and the imputed risk functions $\rho_{IC}^{s}$. In establishing the
outcome space $\Omega$ and the associated distribution used in any
of these risk functions, we used a uniform distribution constructed based
on the first thirty days of joint returns in each sample. In comparing 
the out-of-sample performances, we additionally compute the optimal out-of-sample portfolios $\hat{x}_{OCE}^s$ and $\hat{x}_{Spec}$ optimized respectively based on the risk measure $\rho_{OCE}^s$ and $\rho_{Spec}$ using the out-of-sample data (i.e., the 
second thirty days of joint return in each sample). We benchmark the performances of the in-sample portfolios (i.e.,  $x_{OCE}^{s}$, $x_{Spec}$, and $x_{IC}^{s}$) against the optimal out-of-sample portfolios $\hat{x}_{OCE}^s$ and $\hat{x}_{Spec}$.

Table \ref{tab1} and \ref{tab2} present respectively the in-sample and out-of-sample results in terms of the averages.  In reading the tables, when an entry corresponds to the portfolio $x_{IC}^{s}$ or $x_{OCE}^{s}$ and/or the measure $\rho_{OCE}^{s}$ parameterized by $s$, the $s-$value on the top of each column is the value specifying the parameter. All values in the tables are calculated by averaging the performances over 5000 experiments.  We also provide other statistics in terms of boxplot in Figure \ref{fig:box_in} and \ref{fig:box_out}. Note that the plus sign ``+" in the boxplots refers to the average.

\begin{table}[h]
\centering
\begin{tabularx}{\textwidth}{c *{8}{Y}}
\toprule
portfolio
 & \multicolumn{4}{c}{$\boldsymbol{\rho_{OCE}^s}$ (in lost p.p. relative to $x_{OCE}^s$)}  
 & \multicolumn{4}{c}{$\boldsymbol{\rho_{Spec}}$  (in lost p.p. relative to $x_{Spec}$)}\\
\cmidrule(lr){2-5} \cmidrule(l){6-9}
  & $\boldsymbol{s=0.1}$ & $\boldsymbol{s=1}$ & $\boldsymbol{s=10}$ & $\boldsymbol{s=100}$ & $\boldsymbol{s=0.1}$ & $\boldsymbol{s=1}$ & $\boldsymbol{s=10}$ & $\boldsymbol{s=100}$\\
\midrule
 $\boldsymbol{x_{Spec}}$ & 0.33 & 0.25 & 0.16 & 0.04 & {\bf 0.00} & {\bf 0.00} & {\bf 0.00} & {\bf 0.00} \\
$\boldsymbol{x_{IC}^s}$ & 0.05 & 0.03 & 0.02 & 0.01 & 1.27 & 0.95 & 0.39 & 0.04\\
$\boldsymbol{x_{OCE}^s}$ & {\bf 0.00} & {\bf 0.00} & {\bf 0.00} & {\bf 0.00} & 1.86 & 1.26 & 0.49 & 0.08\\
\bottomrule
\end{tabularx}
\caption{Comparison of the average in-sample performances in lost percentage points (lower is less risky) of the portfolios $x_{Spec}$, $x_{IC}^{s}$, and $x_{OCE}^{s}$ with respect to different choices of the parameter $s$. Each portfolio in the left is evaluated based on the true risk measure $\rho_{OCE}^{s}$ (relative to the performances of $x_{OCE}^{s}$), whereas in the right is evaluated based on the reference risk function $\rho_{Spec}$ (relative to the performances of $x_{Spec}$).}
\label{tab1}
\end{table}

\begin{figure}[h]
\centering
\includegraphics[width=1.1\textwidth]{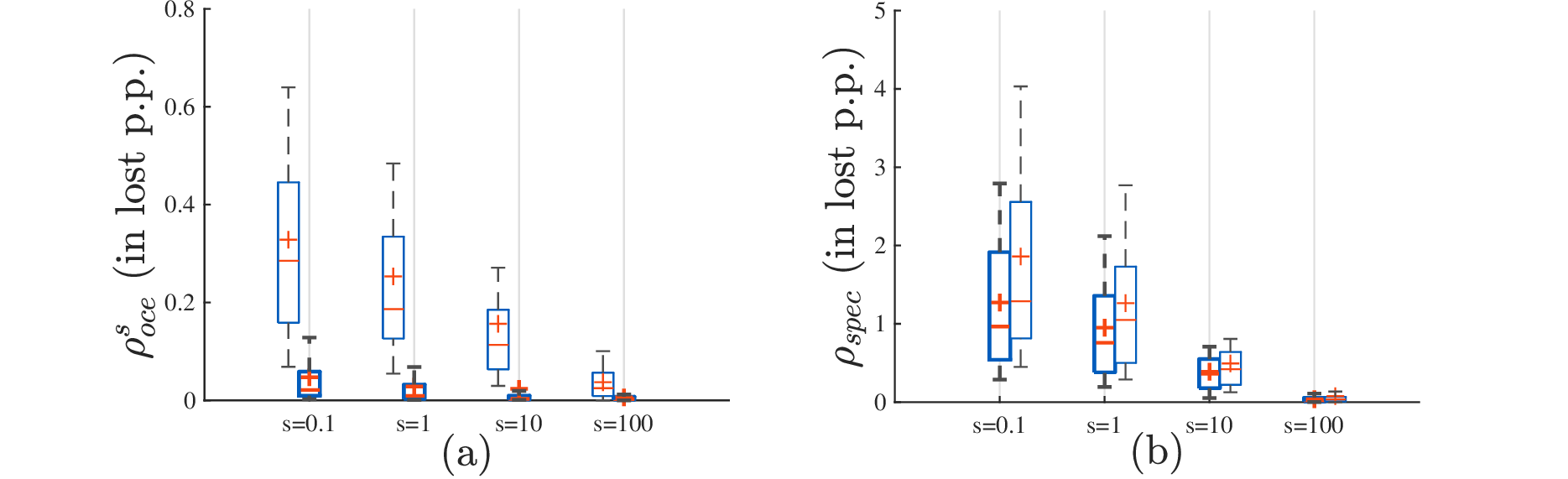}
\caption{Comparison of the in-sample performances in terms of boxplot. (a) Evaluations based 
on the true risk measure $\rho_{OCE}^{s}$ (relative to the performances of $x_{OCE}^{s}$), where each pair of boxplots consists of $x_{IC}^{s}$ (right) and $x_{Spec}$ (left). (b) Evaluations based on the reference risk function $\rho_{Spec}$ (relative to the performances of $x_{Spec}$), where $x_{IC}^{s}$ (left) and $x_{OCE}^s$ (right).}
\label{fig:box_in}
\end{figure}

It is not surprising to see in Table \ref{tab1} that, in terms of
in-sample performance, the best-performing portfolios are those optimized according to the measures used for performance
evaluation. Note that the portfolios optimized
based on the reference risk function  (i.e., $x_{Spec}$) can be deemed unsatisfactory
when evaluated according to the true risk measure $\rho_{OCE}^{s}$.
They underperform the optimal portfolios $x_{OCE}^{s}$ by an amount
up to 33 basis points (i.e., 0.33 p.p.), which can be difficult to justify in terms of their alignment with the performances desired by the client. On the other hand, the portfolios optimized
based on the imputed risk functions  (i.e., $x_{IC}^{s}$) perform more closely to the optimal portfolios $x_{OCE}^{s}$ with less than 5 basis
points' difference. Note that although by construction the imputed
risk functions $\rho_{IC}^{s}$ guarantee the optimality of the portfolios
$x_{OCE}^{s}$, minimizing $\rho_{IC}^{s}$ in the forward problem
does not necessarily lead to the same optimal solution  (i.e., $x_{IC}^{s}\neq x_{OCE}^{s}$). Even so, the benefit of incorporating the solution $x_{OCE}^{s}$
into the imputed risk function $\rho_{IC}^{s}$ is still clear when one considers the improvement of $x_{IC}^{s}$ over $x_{Spec}$ in terms of the true risk  (i.e., $\rho_{OCE}^{s}$). It is expected also from our formulation of the inverse
problem that the imputed risk function $\rho_{IC}^{s}$ should not
differ too significantly from the spectral risk measure $\rho_{Spec}$.
We can see that the results evaluated based on $\rho_{Spec}$ provide the
evidence for that  (i.e., that the portfolio $x_{IC}^{s}$ also performs more similarly to the optimal portfolio $x_{Spec}$ in this case than the portfolio $x_{OCE}^{s}$). This also confirms the effectiveness
of the imputed risk functions $\rho_{IC}^{s}$ to take into account
the information contained in the reference risk function $\rho_{Spec}$. 
Moreover, from Figure \ref{fig:box_in} we can further see that the improvements of
$x_{IC}^{s}$ over $x_{Spec}$ (in terms of $\rho_{OCE}^s$) and $x_{OCE}^s$ (in terms of $\rho_{Spec}$) are also evident across all the statistics presented in the boxplots. The observation that the performances of $x_{IC}^s$ in fact dominate the performances of the others indicates a clear gain from employing an imputed risk function.

\begin{table}[h]
\begin{tabularx}{\textwidth}{c *{8}{Y}}
\toprule
portfolio
 & \multicolumn{4}{c}{ $\boldsymbol{\rho_{OCE}^s}$ (in lost p.p. relative to $\hat{x}_{OCE}^s$)}  
 & \multicolumn{4}{c}{ $\boldsymbol{\rho_{Spec}}$   (in lost p.p. relative to $\hat{x}_{Spec}$)}\\
\cmidrule(lr){2-5} \cmidrule(l){6-9}
  & $\boldsymbol{s=0.1}$ & $\boldsymbol{s=1}$ & $\boldsymbol{s=10}$ & $\boldsymbol{s=100}$ & $\boldsymbol{s=0.1}$ & $\boldsymbol{s=1}$ & $\boldsymbol{s=10}$ & $\boldsymbol{s=100}$\\
\midrule
 $\boldsymbol{x_{Spec}}$ & {\bf 0.55} & {\bf 0.42} & {\bf 0.37} & 1.10 & {\bf 1.12} & {\bf 1.26} & {\bf 1.12} & 1.11 \\
$\boldsymbol{x_{IC}^s}$ & 0.57 & 0.47 & 0.37 & 0.99 & 2.25 & 1.79 & 1.13 & 1.02\\
$\boldsymbol{x_{OCE}^s}$ & 0.57 & 0.49 & 0.39 & {\bf 0.95} & 2.91 & 2.08 & 1.18 & {\bf 0.99}\\
\bottomrule
\end{tabularx}
\caption{Comparison of the average out-of-sample performances in lost percentage points  (lower is less risky) of the portfolios $x_{Spec}$, $x_{IC}^{s}$, and $x_{OCE}^{s}$ with respect to different choices of the parameter $s$. Each portfolio in the left is evaluated based on the true risk measure $\rho_{OCE}^{s}$ (relative to the optimal out-of-sample portfolio $\hat{x}_{OCE}^{s}$), whereas in the right is evaluated based on the reference risk function $\rho_{Spec}$ (relative to the optimal out-of-sample portfolio $\hat{x}_{Spec}$).}
\label{tab2}
\end{table}

\begin{figure}[h]
\centering
\includegraphics[width= 1.1\textwidth]{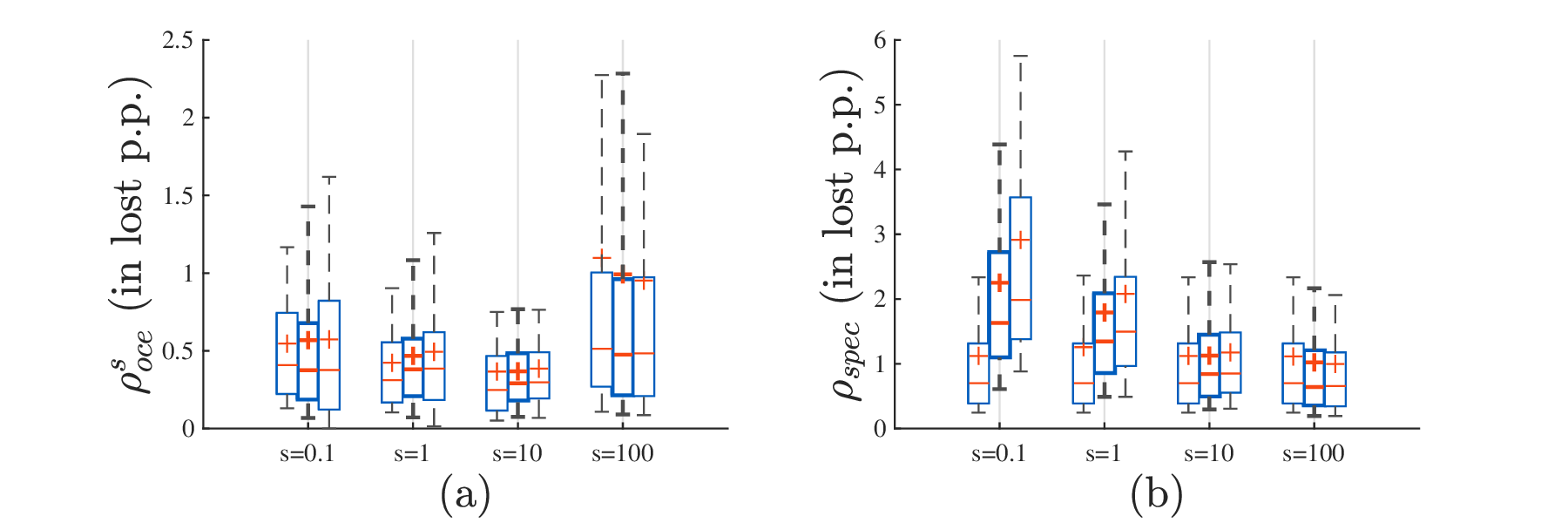}
\caption{Comparison of the out-of-sample performances in terms of boxplots, where each group of boxplots consists of $x_{IC}^{s}$ (center), $x_{Spec}$ (left), and $x_{OCE}^s$ (right). (a) Evaluations based on the true risk measure $\rho_{OCE}^{s}$ (relative to the optimal out-of-sample portfolio $\hat{x}_{OCE}^{s}$).
 (b)  Evaluations based on the reference risk function $\rho_{Spec}$ (relative to the optimal out-of-sample portfolio $\hat{x}_{Spec}$).}
\label{fig:box_out}
\end{figure}

The out-of-sample results presented in Table \ref{tab2} are calculated by subtracting the performances of the optimal out-of-sample portfolios $\hat{x}_{OCE}^s$ and $\hat{x}_{Spec}$ from the performances of the in-sample portfolios $x_{OCE}^s$, $x_{Spec}$, and $x_{IC}^s$. Like Table \ref{tab1}, the best performances  (i.e., the lowest values) are bold in Table \ref{tab2}. Perhaps quite surprisingly, in the cases of $s=0.1,1,10$, the in-sample portfolio $x_{OCE}^s$ actually underperforms the portfolio $x_{Spec}$ when evaluated according to the true risk measure $\rho_{OCE}^s$. This means that in these cases the portfolios optimized based on $\rho_{OCE}^s$ do not generalize well to out-of-sample data. It is possible to explain this by drawing the connection between the risk measure $\rho_{OCE}^s$ and expectation, namely that $\rho_{OCE}^s$ is close to expectation when $s$ is small. It is well known in portfolio optimization that portfolios optimized based on sample averages are highly unstable and suffer from poor out-of-sample performances. Indeed, we can see from Figure \ref{fig:box_out} (a) that, in the cases $s=0.1$ and $s=1$, the performances of $x_{OCE}^s$ are not only higher in terms of the average but also in terms of the spread, in comparison with that of $x_{Spec}$. We believe the reason why portfolios optimized based on $\rho_{Spec}$ appear more stable is that $\rho_{Spec}$ provides more conservative estimates of risk than $\rho_{OCE}^s$ for small $s$. This can be seen by comparing Figure \ref{fig:box_out} (a) and (b), where the values of the former are clearly smaller than the latter, except the case of $s=100$ where the two are more similar. Actually, we see in the case of 
$s = 100$, which is the most risk-averse case, the portfolio $x_{OCE}^s$ turns to outperform $x_{Spec}$ also in terms of both risk measures $\rho_{OCE}^s$ and $\rho_{Spec}$. These observations appear to align with the common belief in robust optimization that solutions optimized based on more conservative estimates are likely to enjoy more stable performances. 

Interestingly, in all cases, the performance of the in-sample portfolio $x_{IC}^s$ consistently falls between the performances of the two portfolios $x_{Spec}$ and $x_{OCE}^s$ in most of the statistics. Hence, in the cases where the portfolios optimized based on the true risk measure $\rho_{OCE}^s$ may suffer from the instability issue, the imputed risk function $\rho_{IC}^s$ can also be considered as a useful surrogate for generating more stable portfolios. That the imputed risk function $\rho_{IC}^s$ resembles only partially the true risk function $\rho_{OCE}^s$ (through an observed decision) and is more similar to the reference risk function $\rho_{Spec}$ otherwise appears to provide a mechanism to dampen the potential over-fitting issue. This also provides a further reason why one should consider choosing a more conservative risk measure as the reference risk function: not only is it practically more sensible (as mentioned in the introduction), but it also provides the basis for the bias-variance tradeoff  (i.e., biased towards more conservative estimates with the hope to reduce the variance). Note that from Figure \ref{fig:box_out} (a), we see that in the cases $s=0.1$ and $s=1$, the performance of $x_{Spec}$ is worse than that of $x_{OCE}^s$ in the lower quantiles and this can be the price paid for using more conservative risk measures  (i.e., $\rho_{Spec}$). The imputed risk function $\rho_{IC}^s$ partially corrects the bias (from the observed decision) and reaches a finer level of bias-variance tradeoff. One may consider also tuning the parameter of the reference risk function (e.g., $\lambda$ in $\rho_{Spec}$) to explore different tradeoff levels.
 
\subsection{The case of multiple observations} \label{52}
In this section, we consider incorporating multiple observations in the inverse models and focus on studying the resulting performances as $|{\cal T}| \rightarrow \infty$. We use the historical returns of the same 335 companies considered in the previous section. Following closely the experiment setup in \cite{Delage:2015aa}, here we consider weekly returns and assume that at every given time point the client applies the latest 13 weeks of joint returns to construct plausible scenarios  (i.e., $|\Omega| = 13$). In each of our experiments, we randomly draw five assets from the pool of 335 and $|{\cal T}|$ samples of their 13-week returns $R^t \in \Re^{13\times 5}$, $t=1,...,|{\cal T}|$. Based on each sample $R^t$, we solve the forward problem using the true risk function, which gives us the observations $(x^t, R^tx, {\cal X})$, $t=1,...,|{\cal T}|$, where ${\cal X}:=\left\{x\;\middle |\;\vec{1}^{\top}x=1,\;x\geq 0\right\}$. We then apply suitable inverse models, as detailed below, to generate the risk functions. To evaluate the performances resulting from the use of the imputed risk functions, we randomly select an alternative sample of 13-week returns $\hat{R}$ of the 5 assets, solve the forward problem based on the sample $\hat{R}$ and the imputed risk function, and evaluate the performance of the resulting portfolio based on the true risk function. We repeat such an experiment 1000 times for evaluating average performance.

In particular, we simulate both the case where the client has incomplete knowledge about the underlying distribution and is ambiguity-averse (like Example \ref{exx2}) and the case where the client is able to identify and reveal the distribution (like Example \ref{exx3}). Finally, we simulate the case where the client did not optimally make the decisions according to the true risk function.

\subsubsection*{The case of distributional ambiguity}
We assume in this scenario that the client's true risk preference is captured by the following form of distributionally robust risk measure 
$$  \rho_{OCE}^{s,\uparrow}(\vec{Z}) :=  \sup_{q \in \Delta} \left\{\rho_{OCE}^s (\vec{Z};q)\;\middle | \sum_{i =1}^{|\Omega|}  |q_i  -\hat{q}_i | \leq d   \right\}, $$ where variation distance is applied to measure the difference between distributions and we set $d = 0.1$. Recall that the notation $\rho_{OCE}^s (\vec{Z};q)$ refers to the use of distribution $q$ in evaluating the OCE risk measure and that $\hat{q}$ stands for the empirical distribution.

Here, the measure of variation distance is chosen because of its simplicity, which is perhaps easier to interpret than other distance measures. Of course, in our simulation, 
the manager has no knowledge about this form (neither the function $\rho_{OCE}^s$ nor the uncertainty set of $q$) and has access only to decisions made according to the function $\rho_{OCE}^{s,\uparrow}(\vec{Z})$. Confirming that the client is insensitive to the ordering of outcomes, the manager can choose to implement the inverse model (\ref{eq:inv3}) and assume that the client's true risk function is permutation invariant  (i.e., setting $\rho \in {\cal R}:=\bars{{\cal R}}$ in (\ref{eq:inv3})). The convex program presented in Corollary \ref{inv3b} in Appendix \ref{ssec02} is implemented.

Figure \ref{fig:InSample1} shows the average performances based on the true risk function $\rho_{OCE}^{s,\uparrow}(\vec{Z})$ for various risk aversion parameter $s$. We also provide the performance evaluation for the famous $(1/N)$-investment rule  (i.e., setting $x_d = (1/5)$, $d=1,...,5$), which has often been considered as a popular rule-of-thumb for dealing with ambiguity. The boxplots presented in this figure (and also Figure \ref{fig:InSample2}, \ref{fig:InSample3}, and \ref{fig:InSample4}) provide the statistics of  $ \rho(\vec{Z}_S({x_{IC}^s}(S)))- \rho(\vec{Z}_S({x_{OCE}^s}(S))) + \frac{1}{|{\cal S}|} \sum_{S \in {\cal S}} \rho(\vec{Z}_S({x_{OCE}^s}(S)))$, where $\rho$ represents the true risk function (i.e., $\rho:=\rho_{OCE}^{s,\uparrow}$ in Figure \ref{fig:InSample1}), $\vec{Z}_S(\cdot)$ is the total return estimated based on each sample of historical returns (denoted by $S \in {\cal S}$), and $x_{IC}^s(S)$ (resp. $x_{OCE}^s(S)$) stands for the portfolio optimized based on the imputed risk function (resp. the true risk function) and each sample $S$. One can see in Figure \ref{fig:InSample1} that as the number of observations $|{\cal T}|$ increases, the performance of portfolios optimized based on the imputed risk functions $\bars{\rho}_{\delta}$ converge to the performance of portfolios optimized based on the true risk function $\rho_{OCE}^{s,\uparrow}(\vec{Z})$. The rate of convergence is fast for a small number of samples  (i.e., the performance is improved most rapidly when $|{\cal T}|$ is small), and it generally takes about 10--50 samples to reach a good accuracy (i.e., with less than 0.1 p.p. difference). This is encouraging because it indicates how the risk preference can be efficiently captured even without complete knowledge of the underlying distribution. One can also see that the $(1/N)$-investment rule unfortunately provides a poor proxy of one's optimal decision and that the cost of its naive form is high. In this sense, the inverse optimization approach can already serve as a useful alternative even when, for instance, only a single observation is available $|{\cal T}|=1$.

\begin{figure}[h]
\centering
\begin{minipage}{\textwidth}
\centering
\includegraphics[width=\textwidth]{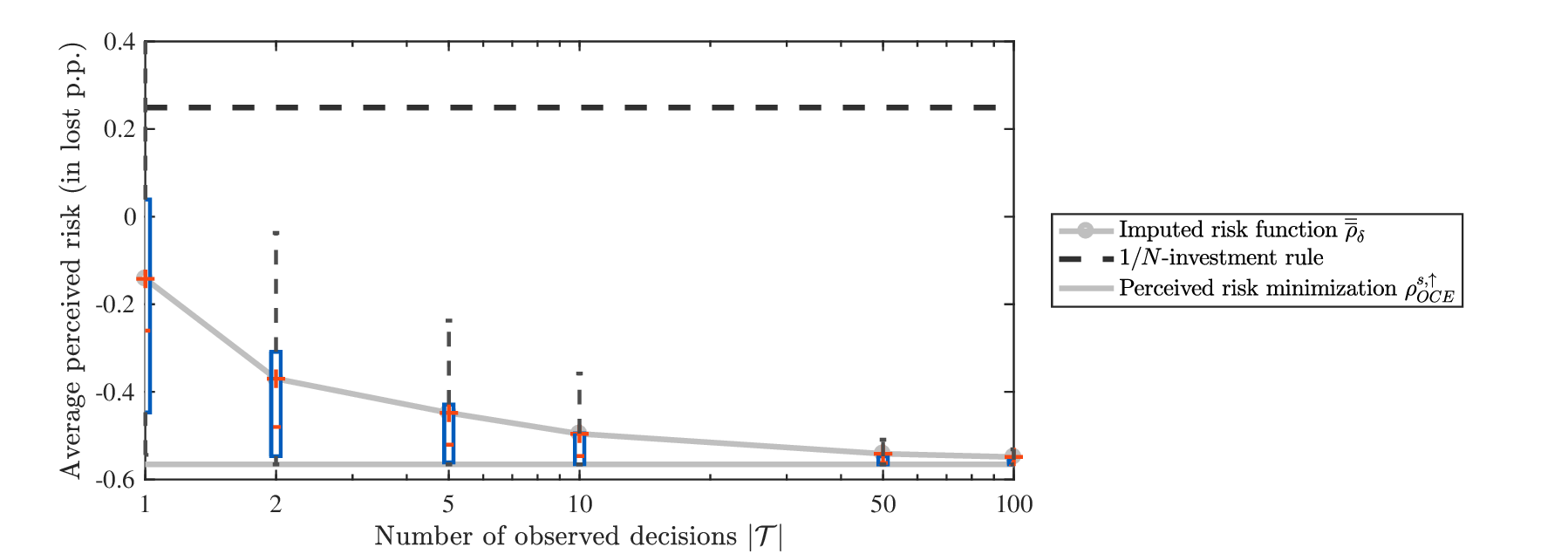}
(a)
\end{minipage}
\begin{minipage}{\textwidth}
\centering
\includegraphics[width=\textwidth]{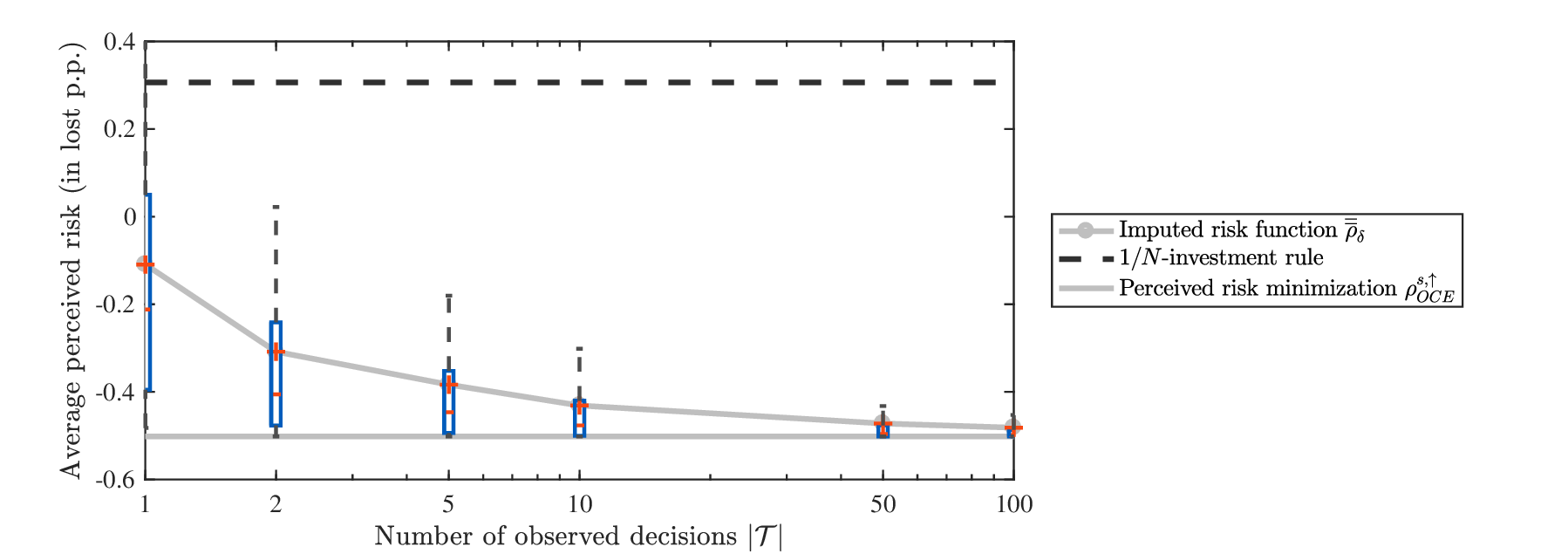}
(b)	
\end{minipage}
\begin{minipage}{\textwidth}
\centering
\includegraphics[width=\textwidth]{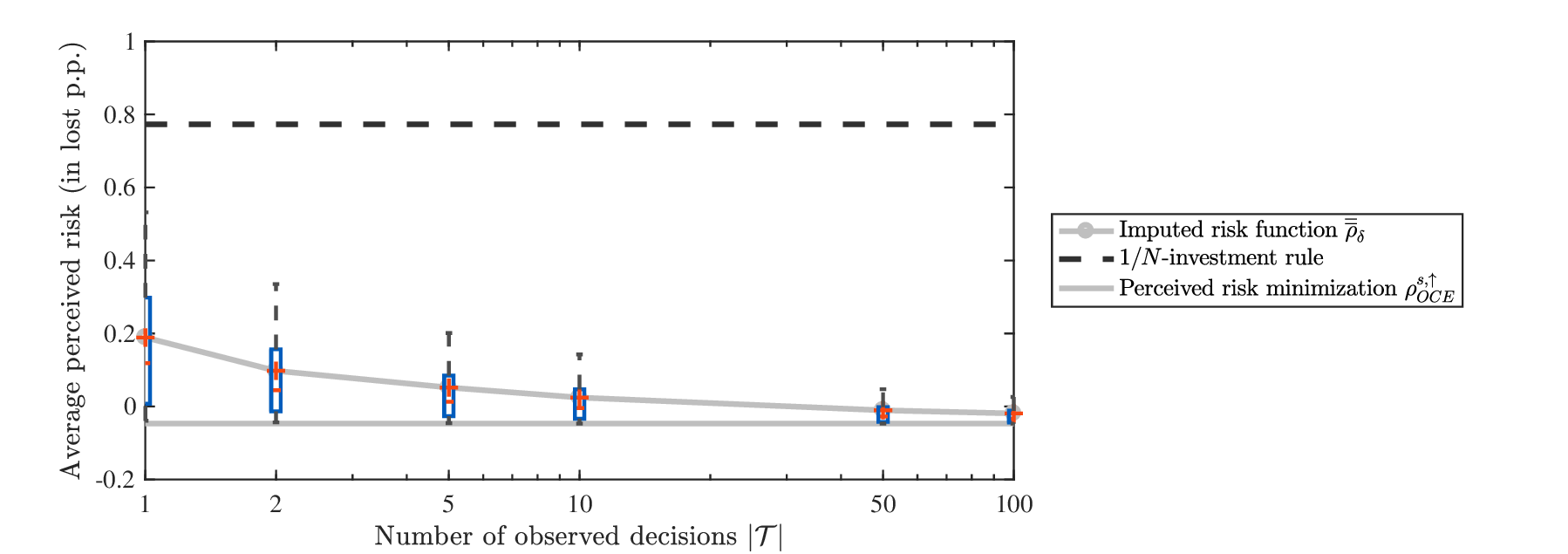}
(c)	
\end{minipage}
\caption{Comparison of the average perceived risk (in lost percentile points) for the portfolio obtained using either imputed risk function $\bars{\rho}_{\delta}$ or $1/N$-investment rule with up to 100 observed decisions. We also report the best average perceived risk that could be obtained if the representation of this perception was exactly known. In particular, the true risk function $\rho_{OCE}^{s,\uparrow}$ above is set by (a) $s=0.1$, (b) $s=1$, and (c) $s=10$.}
\label{fig:InSample1}
\end{figure}

\subsubsection*{The case of known distributions} In this scenario, we assume that the client's true risk function is the OCE risk measure and that she can identify and reveal the distribution $q$ used for evaluating the risk function  $\rho_{OCE}^s(\vec{Z};q)$. The manager in this case can choose to solve the inverse model (\ref{eq:inv3}) and assume that the client's true risk function is law invariant (i.e., setting $\rho \in {\cal R}_{F}$ in (\ref{eq:inv3})). The convex program presented in Corollary \ref{inv4b} in Appendix \ref{ssec02} is implemented. In each of our experiments, we sample uniformly from the probability simplex $\Delta$ a distribution $q$ to simulate the distribution provided by the client. 

Figure \ref{fig:InSample2} shows the average performances evaluated based on the true risk function $\rho_{OCE}^s(\vec{Z})$ for various risk aversion parameters $s$. We see that the rate of convergence here is similar to what we observed for the case of distributional ambiguity. The only minor difference we noticed is that in the case of large samples $|{\cal T}|=100$, the convergence seems to be slighter stronger in the case of known distributions  (i.e., it is more evident that the average performance gets closer to the lower bound), particularly in Figure \ref{fig:InSample2} (a) and (b). While we cannot comment with certainty, this might indicate the benefit of incorporating exact distribution information.

\begin{figure}[h]
\centering
\begin{minipage}{\textwidth}
\centering
\includegraphics[width=\textwidth]{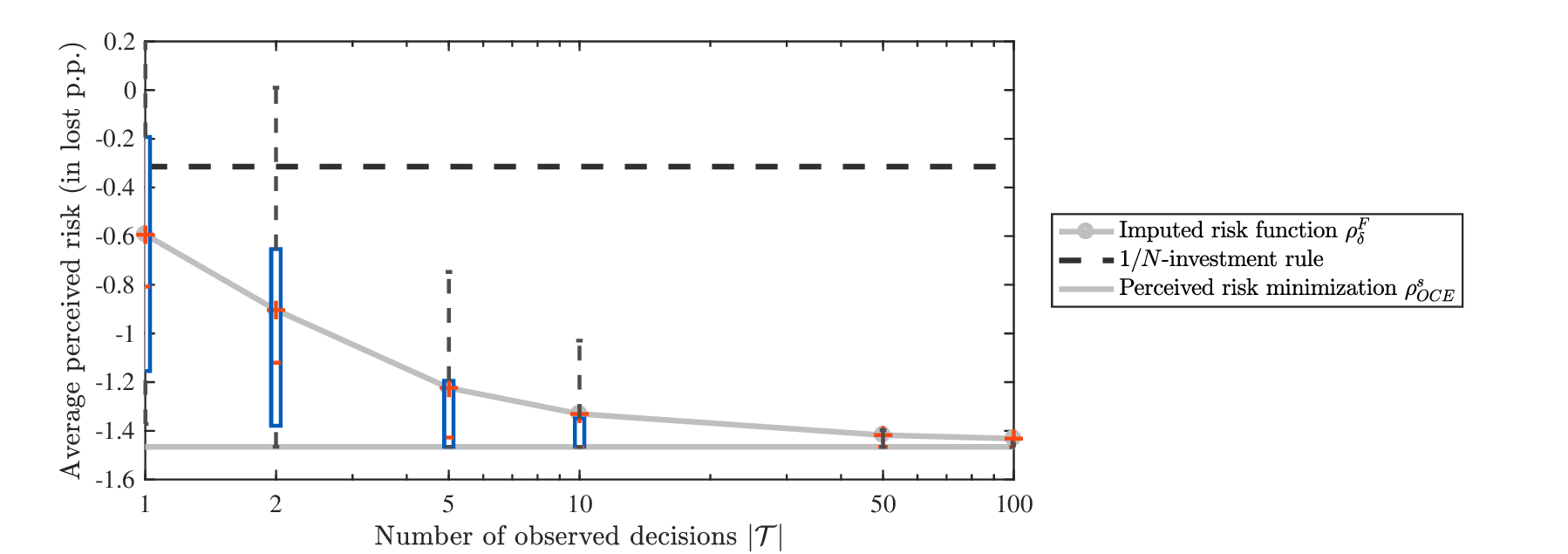}
(a)	
\end{minipage}
\begin{minipage}{\textwidth}
\centering
\includegraphics[width=\textwidth]{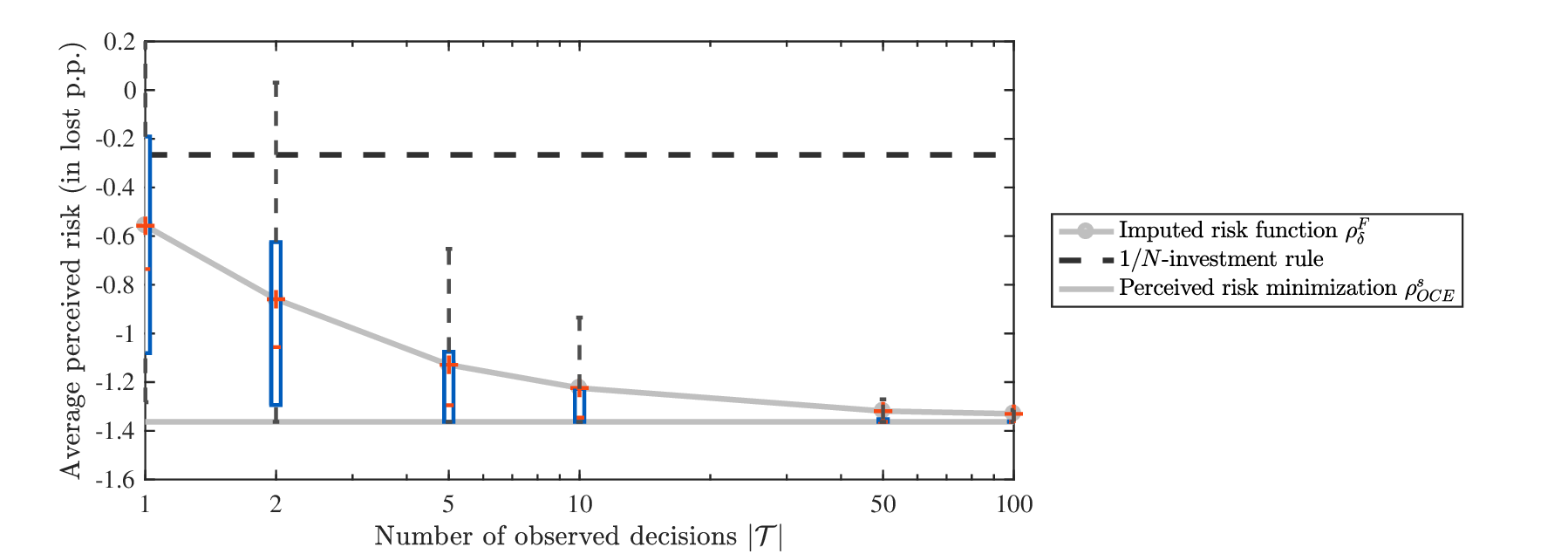}
(b)	
\end{minipage}
\begin{minipage}{\textwidth}
\centering
\includegraphics[width=\textwidth]{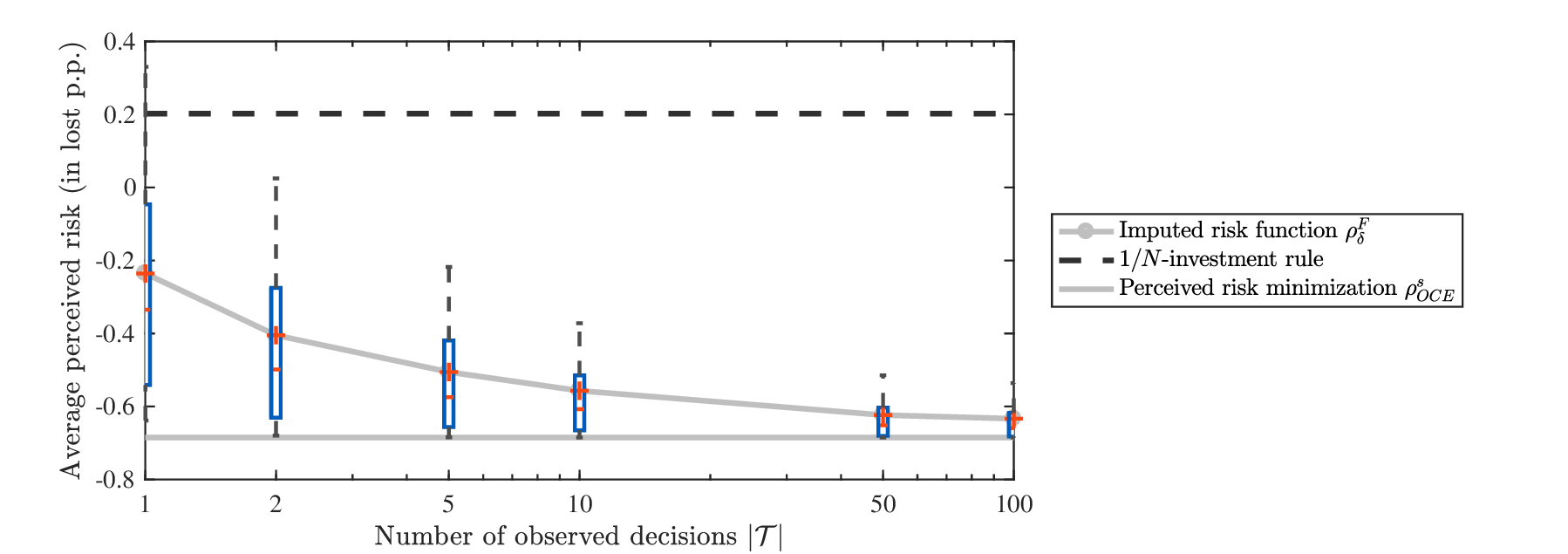}
(c)	
\end{minipage}
\caption{Comparison of the average perceived risk (in lost percentile points) for the portfolio obtained using either imputed risk function $\rho_{\delta}^{F}$ or $1/N$-investment rule with up to 100 observed decisions. We also report the best average perceived risk that could be obtained if the representation of this perception was exactly known. In particular, the above true risk function $\rho_{OCE}^{s}$ is set by (a) $s=0.1$, (b) $s=1$, and (c) $s=10$.}
\label{fig:InSample2}
\end{figure}

Although we have mentioned earlier that it is important to have the decision maker reveal the distributions in use, one may wonder what would result from some unintended inaccuracy. As a preliminary step to answer this question, we simulate here a situation where the distributions provided by the client for inputting into our inverse models might be different from the actual ones she used for solving the forward problem. As mentioned in Remark \ref{remm1}, it would not be reasonable to consider distributions that are too different from the ones used by the decision maker  (i.e., reflecting completely different views of which outcome is more or less likely to occur). Here, we assume that the distributions input into the inverse models are at least similar to those used by the decision maker in that they both assign the probability to each historical return in a non-decreasing fashion with respect to the time of the observed returns. This is an assumption commonly made in finance for example. In other words, the distribution $q$ should satisfy $q \in \Delta_e = \left\{ p \in \Delta \;\middle |\; p_1 \leq \cdots \leq p_{|\Omega|}\right\} $ where $p_i$ is now indexed according to the time of each observed return  (i.e., $p_{|\Omega|}$ is now the probability assigned to the most recent observed return). We stress that the risk function itself does not depend on the time  (i.e., it stays constant). In each of the experiments we randomly sample one distribution $q'$ from $\Delta_e$ for solving the forward problem to generate observed decisions and then sample another distribution $q''$ from $\Delta_e$ for solving the inverse model. The sampling is done by generating a vector uniformly from the probability simplex $\Delta$ and then sorting the vector. Note that in this case we apply first the inverse model (\ref{eq:inv2}) because the observed decisions now may not necessarily be optimal with respect to $q''$. Once we find the minimum gaps $\gamma$, we substitute them into the model (\ref{eq:inv3}) and solve (\ref{eq:inv3}) to obtain the parameter $\delta$. The convex program presented in Corollary \ref{inv4a} in Appendix \ref{ssec02} is additionally implemented.

Figure \ref{fig:InSample3} shows the average performance evaluated based on the true risk function $\rho_{OCE}^s(\vec{Z};q)$ with the true distributions  (i.e., $q:=q'$). Interestingly, even in this case one can see that the imputed risk functions still greatly benefit from the observed decisions. While the performance does not converge as exactly as the case of known distributions in, for instance, the case $|{\cal T}|=100$, it does exhibit strong convergence rate overall.

\begin{figure}[h]
\centering
\begin{minipage}{\textwidth}
\centering
\includegraphics[width=\textwidth]{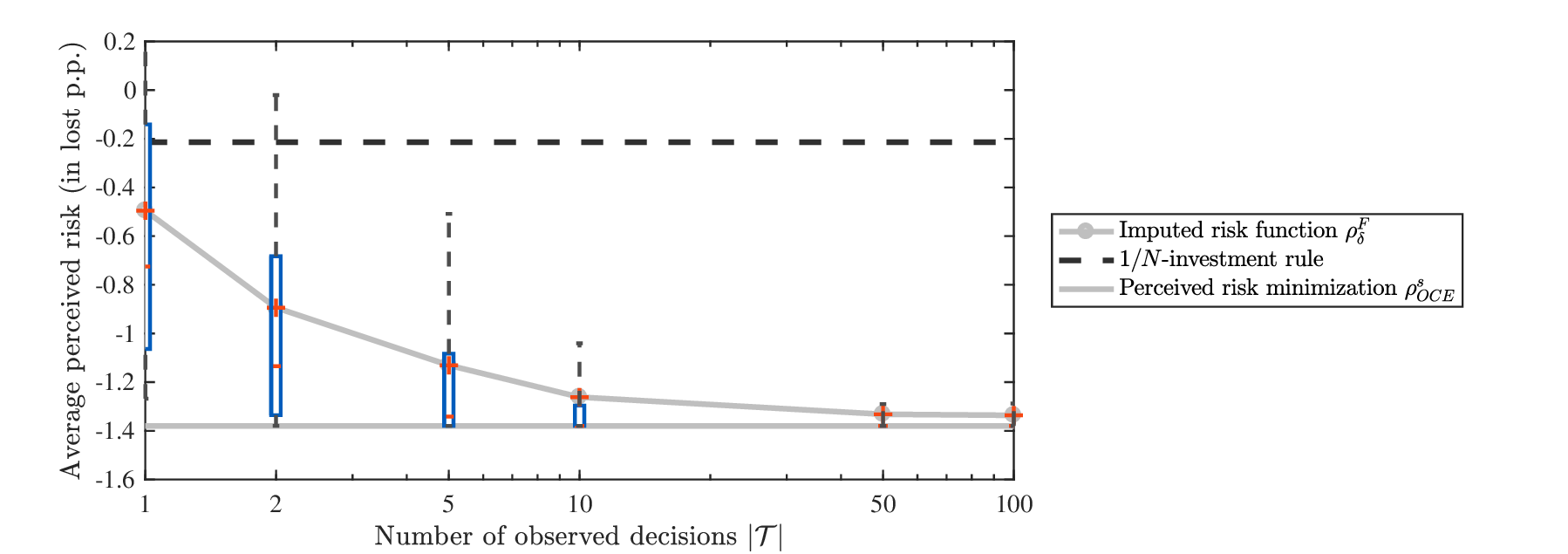}
(a)	
\end{minipage}
\begin{minipage}{\textwidth}
\centering
\includegraphics[width=\textwidth]{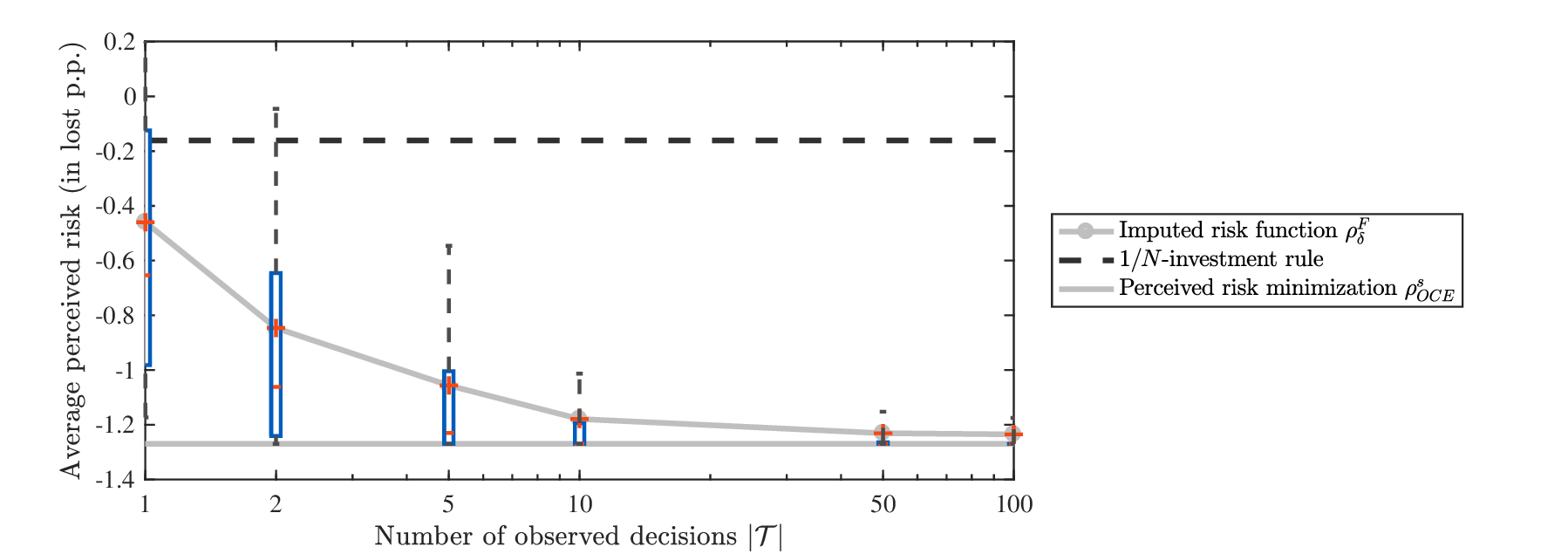}
(b)	
\end{minipage}
\begin{minipage}{\textwidth}
\centering
\includegraphics[width=\textwidth]{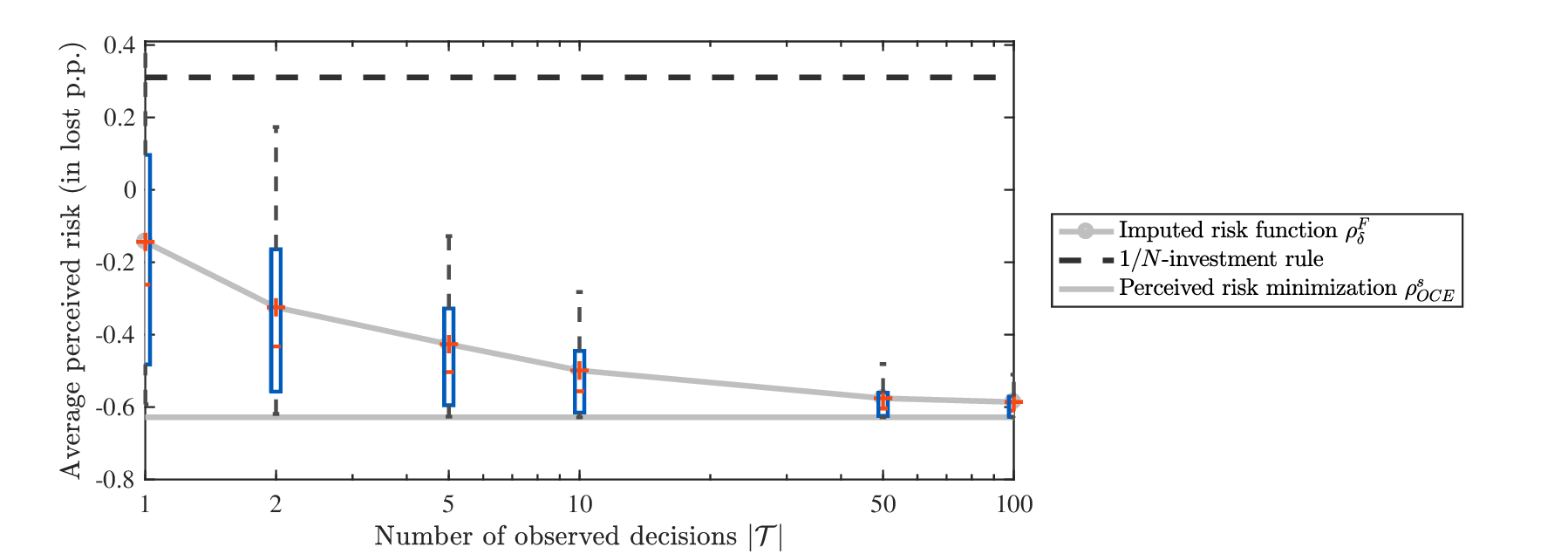}
(c)	
\end{minipage}
\caption{Comparison of the average perceived risk (in lost percentile points) for the portfolio obtained using either imputed risk function $\rho_{\delta}^{F}$ or $1/N$-investment rule with up to 100 observed decisions. We also report the best average perceived risk that could be obtained if the representation of this perception was exactly known. In particular, the above true risk function $\rho_{OCE}^{s}$ is set by (a) $s=0.1$, (b) $s=1$, and (c) $s=10$.}
\label{fig:InSample3}
\end{figure}

\subsubsection*{The case of sub-optimal observed decisions} We consider now the case where the observed decisions are sub-optimal. Here we use the same setting described in the case of distributional ambiguity as the testbed. In each of the experiments, after obtaining the observed decision $x^t$ from optimizing the true risk function  (i.e., the distributionally robust risk measure), we generate a sub-optimal decision $\hat{x}^t$ by perturbing $x^t$ as follows:
 \[\hat{x}^t = \tilde{\lambda} (\frac{1}{5})\vec{1} + (1-\tilde{\lambda}) x^t, \] 
 where $\tilde{\lambda}$ is uniformly generated from $[0, 0.1]$. That is, we take a convex combination between $x^t$ and a portfolio assigning equal weight to each asset.
 
To address the sub-optimality of decisions $\hat{x}^t$, we consider solving both the inverse model (\ref{eq:inv2}) and the inverse model (\ref{eq:inv4}). In particular, to implement the inverse model (\ref{eq:inv4}), for each sub-optimal decision we apply first the program (\ref{invperm1}) to seek an alternative optimal decision $\tilde{x}^t$ and then follow the first approach discussed in the end of Section \ref{sec4} to incorporate multiple observations. 
 
Figure \ref{fig:InSample4} presents the results of average performances for the portfolios generated from the two approaches  (i.e., (\ref{eq:inv2}) versus first solving (\ref{eq:inv4}) then solving (\ref{eq:inv2})). There appears no noticeable difference between the performances of the portfolios generated from the two approaches. However, in Figure \ref{fig:boxplot} one can see that the optimal portfolios $\tilde{x}^t$ generated based on the model (\ref{eq:inv4}) are far closer to the observed sub-optimal decisions $\hat{x}^t$ than the optimal portfolios generated based on the model (\ref{eq:inv2}) only. Hence, if the client has a concern about what portfolio $x$ specifically the model would suggest to invest and whether it is aligned with her past investment decision, the model (\ref{eq:inv4}) might be easier to justify for that purpose.

\begin{figure}[h]
\centering
\begin{minipage}{\textwidth}
\centering
\includegraphics[width=\textwidth]{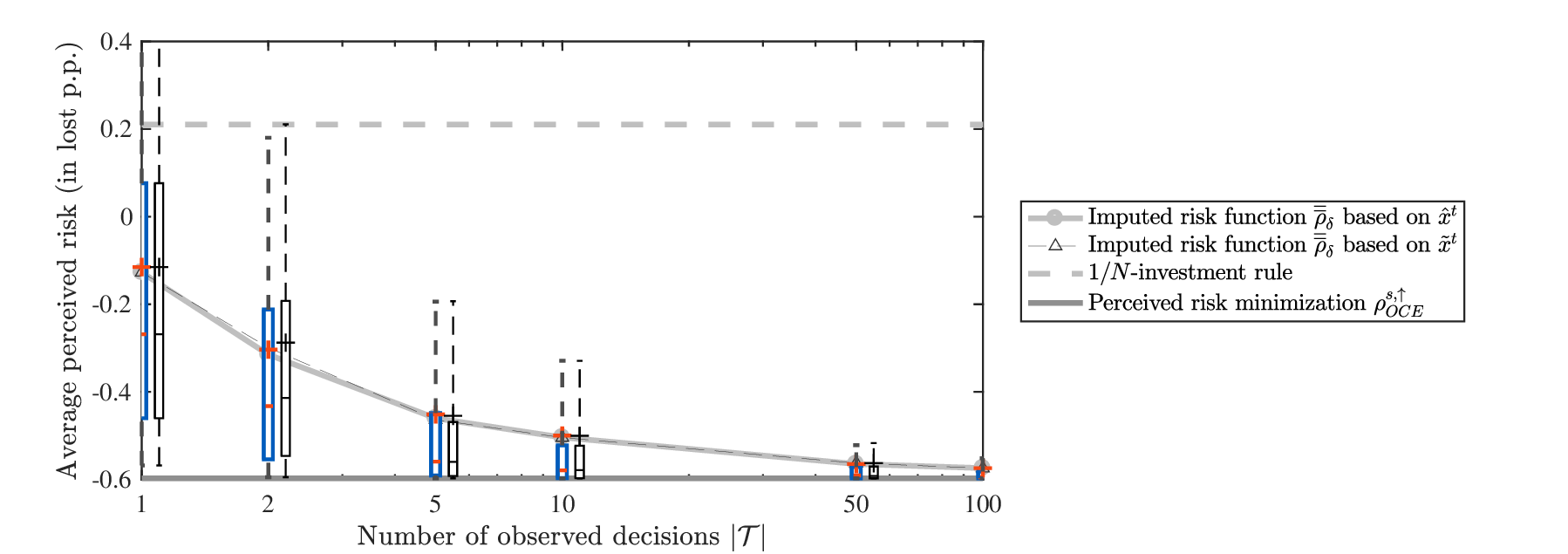}
(a)	
\end{minipage}
\begin{minipage}{\textwidth}
\centering
\includegraphics[width=\textwidth]{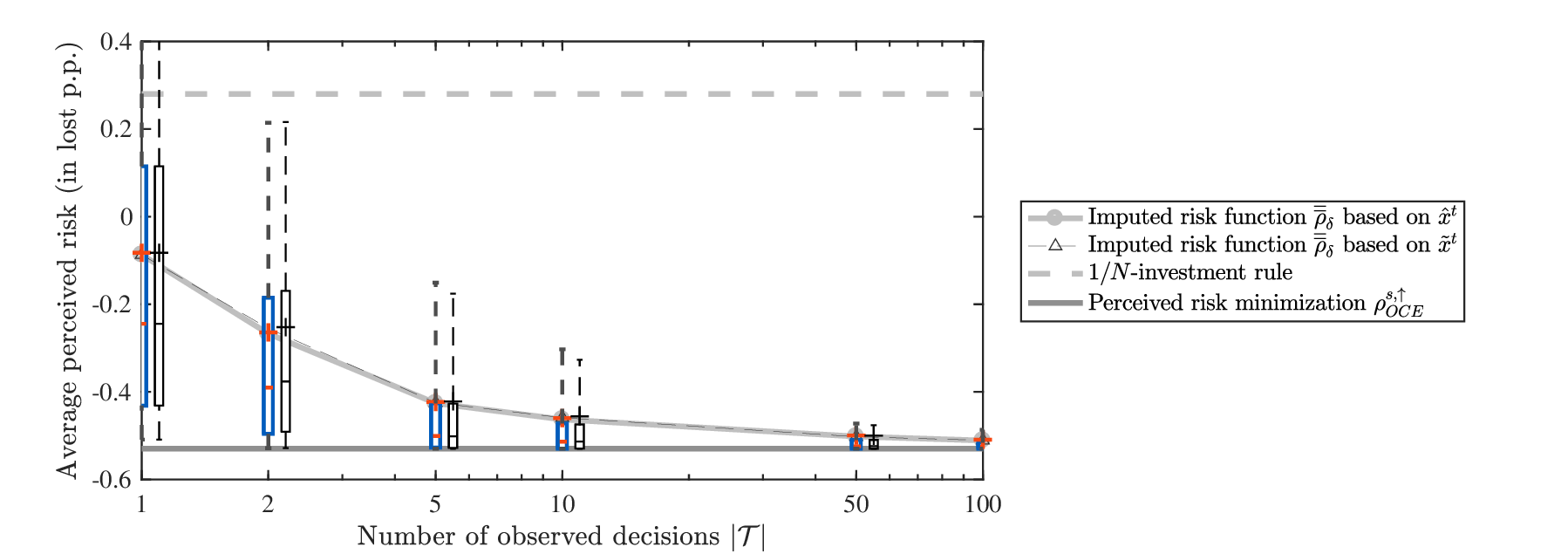}
(b)	
\end{minipage}
\begin{minipage}{\textwidth}
\centering
\includegraphics[width=\textwidth]{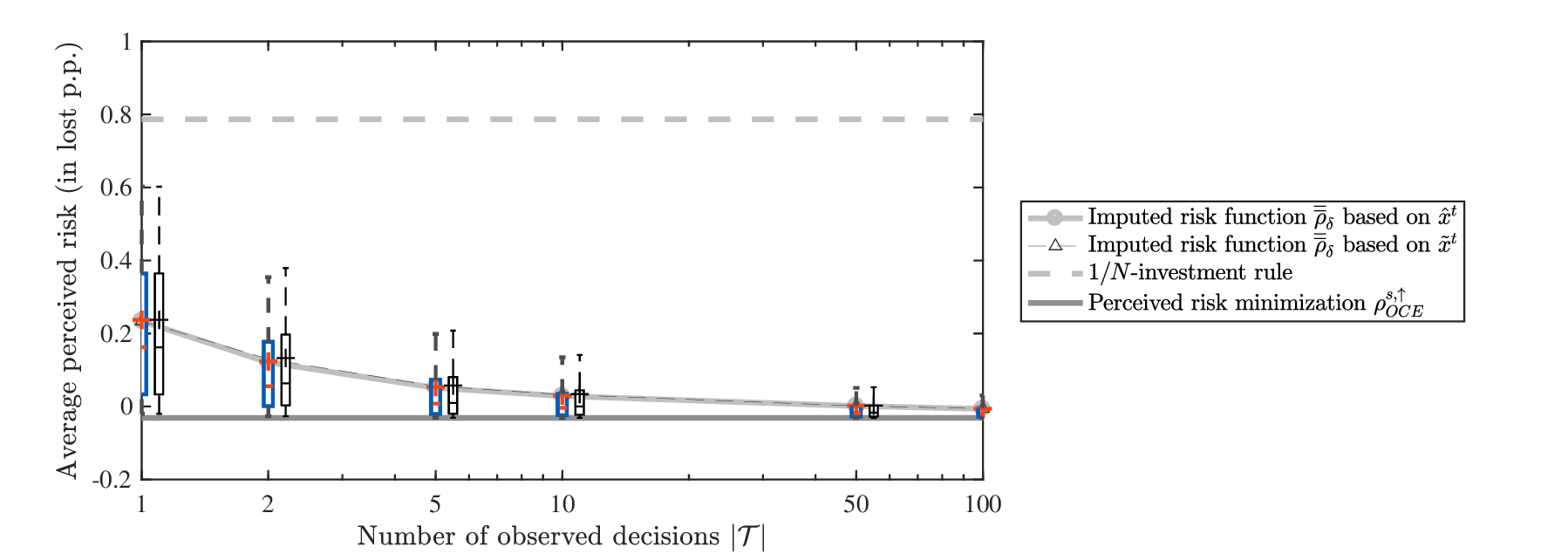}
(c)	
\end{minipage}
\caption{Comparison of the average perceived risk (in lost percentile points) for the portfolio obtained using either imputed risk function $\bars{\rho}_{\delta}$ or $1/N$-investment rule with up to 100 observed decisions. We also report the best average perceived risk that could be obtained if the representation of this perception was exactly known. In particular, the true risk function above $\rho_{OCE}^{s,\uparrow}$ is set by (a) $s=0.1$, (b) $s=1$, and (c) $s=10$. Each pair of boxplots consists of a boxplot based on $\hat{x}^t$ (left) and based on $\tilde{x}^t$ (right).}
\label{fig:InSample4}
\end{figure}

\subsubsection*{Computation time} 
We present in Table \ref{tab4} the computation time taken in an experiment for the 
case of distributional ambiguity. We ran the inverse model (\ref{eq:inv3}) for 
different numbers of assets and outcomes. As seen, the time is not sensitive to the number of assets but grows more noticeably in the number of outcomes. The rate of growth in general is fairly consistent with how the size of the convex program like (\ref{mod2}) grows in the number of observed decisions and outcomes. Namely, the program has $O(|\Omega| |{\cal T}|^{2})$ decision variables and $O(|\Omega|^{2}|{\cal T}|^{2})$ constraints, given that the set $\{\vec{X}_j\}_{j \in {\cal J}}$ consists of only random losses selected by observed decisions $x^t$, $t\in {\cal T}$ (and zero vector $\vec{0}$).

\begin{table}[h]
\centering
\begin{tabular}{clllll} 
\toprule
Number of observed decisions & 1    & 5    & 10   & 50     & 100      \\ 
\hline
Number of assets             &      &      &      &        &          \\
5                            & 0.01 & 0.07 & 0.61 & 22.96  & 159.00   \\
100                          & 0.01 & 0.11 & 0.62 & 22.05  & 160.00   \\
300                          & 0.01 & 0.08 & 0.52 & 28.12  & 165.00   \\ 
\hline
                             &      &      &      &        &          \\ 
\hline
Number of observed decisions & 1    & 5    & 10   & 50     & 100      \\ 
\hline
Number of outcomes           &      &      &      &        &          \\
13                           & 0.01 & 0.07 & 0.61 & 22.96  & 159.00   \\
26                           & 0.02 & 0.63 & 4.28 & 152.14 & 863.97   \\
39                           & 0.07 & 1.06 & 5.69 & 373.91 & 8050.00  \\
\bottomrule
\end{tabular}
\caption{Computation time in seconds: with fixed number of outcomes $|\Omega|$=13 (Top), and with fixed number of assets, 5 assets (Bottom).}
\label{tab4}
\end{table}

\begin{figure}[h]
\centering
\includegraphics[width=\textwidth]{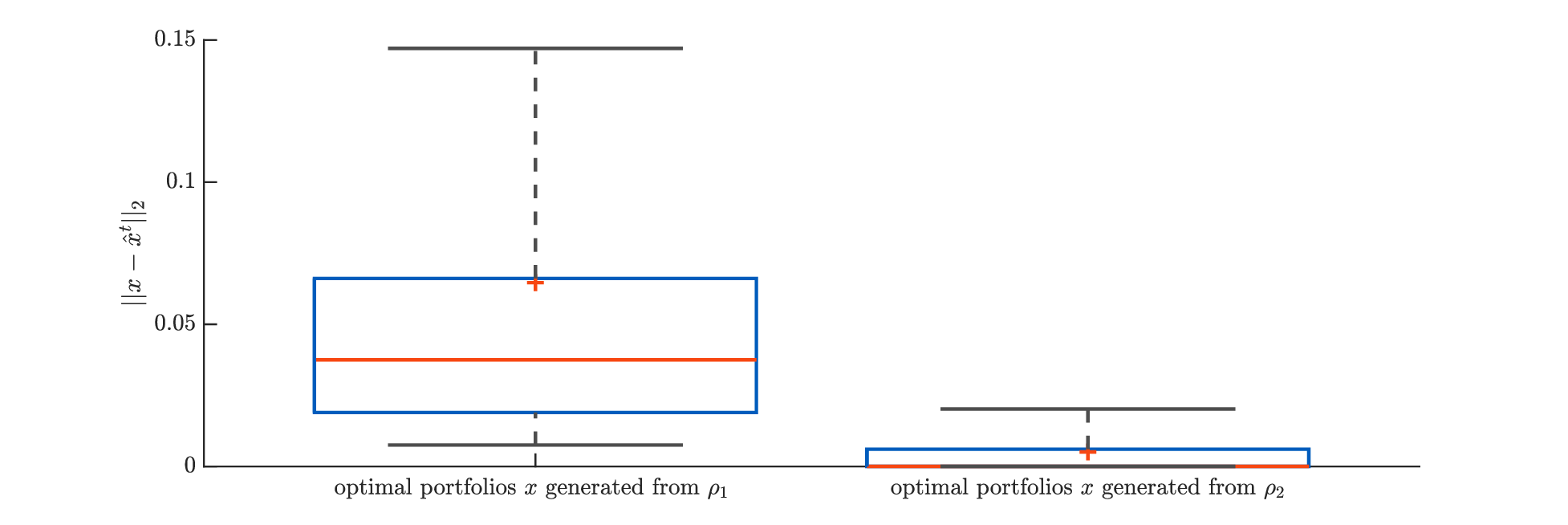}
\caption{The L2-norm distances between the optimal portfolios generated from imputed risk functions and the observed portfolios $\hat{x}^t$ out of all experiments with sub-optimal observed decisions: $\rho_1$ denotes risk functions generated from the inverse model (\ref{eq:inv2}) (left), and $\rho_2$ denotes risk functions generated from the inverse model (\ref{eq:inv4}) (right).} 
\label{fig:boxplot}
\end{figure}

\subsubsection*{Out-of-Sample Performance}
Finally, we conducted out-of-sample testing based also on the setting described in the case of distributional ambiguity. In particular, in each of the experiments we further collected the realized return of each portfolio in the week that follows right after the weeks used for portfolio optimization. Then, we calculated for each portfolio the realized risk  (i.e., calculating the distributionally robust risk measure $\rho_{OCE}^{s,\uparrow}$ based on the collected samples). The results can be found in Figure \ref{fig:InSample5}. 
We observe that the sign of convergence remains strong in these results.  
This provides further evidence of how the imputed risk function closely resembles the true risk function in general.  Another observation is that the convergence of the out-of-sample performance in general is not as ``regular" as the convergence of the in-sample performance. In particular, in the case of high risk-aversion  (i.e., $s=10$), the realized risk of the portfolios generated based on the imputed risk function already reach a similar level as that of the portfolios generated based on the true risk function, with only a small number of observations. After reaching that level, the performance then slightly goes up and down as more observations are acquired. We believe that this is related to the fact that the inverse model used in the experiments for the case of distributional ambiguity is the one that seeks the worst-case risk function. As the worst-case function may encourage a more conservative choice of a portfolio, it thus might not be that surprising why such a portfolio already performs well, even with a small number of observed decisions, when evaluated based on a highly risk-averse OCE risk measure.

\begin{figure}[h]
\centering
\begin{minipage}{\textwidth}
\centering
\includegraphics[width=\textwidth]{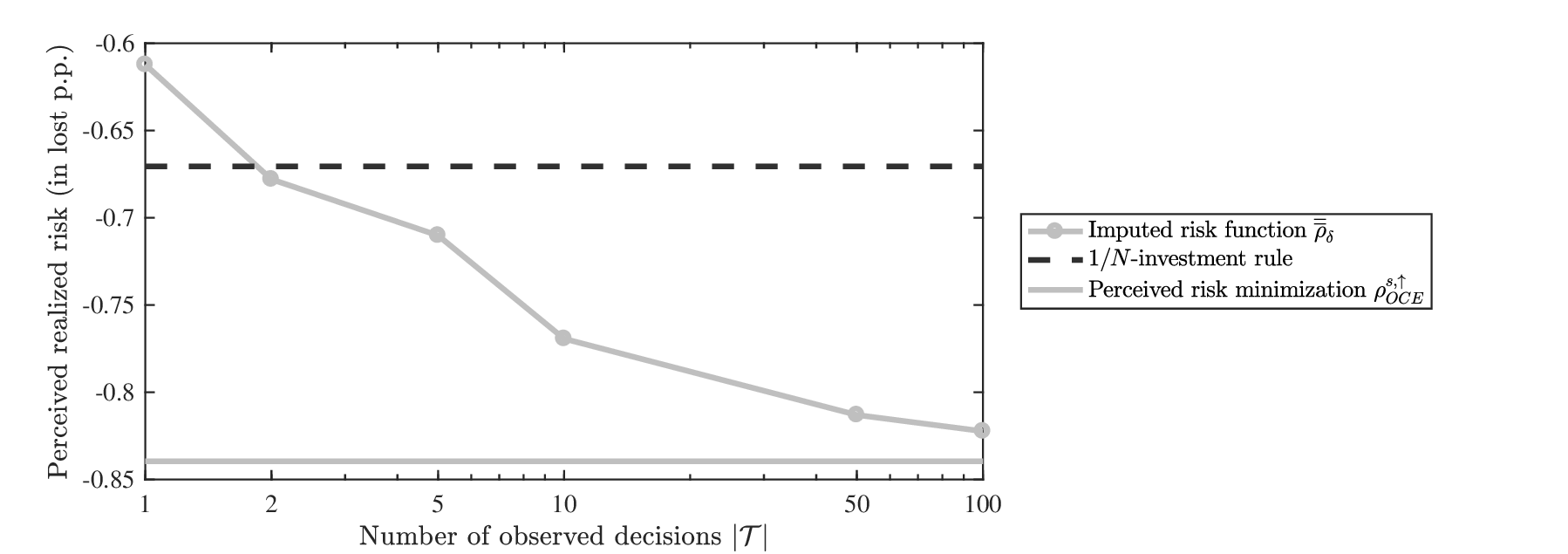}
(a)	
\end{minipage}
\begin{minipage}{\textwidth}
\centering
\includegraphics[width=\textwidth]{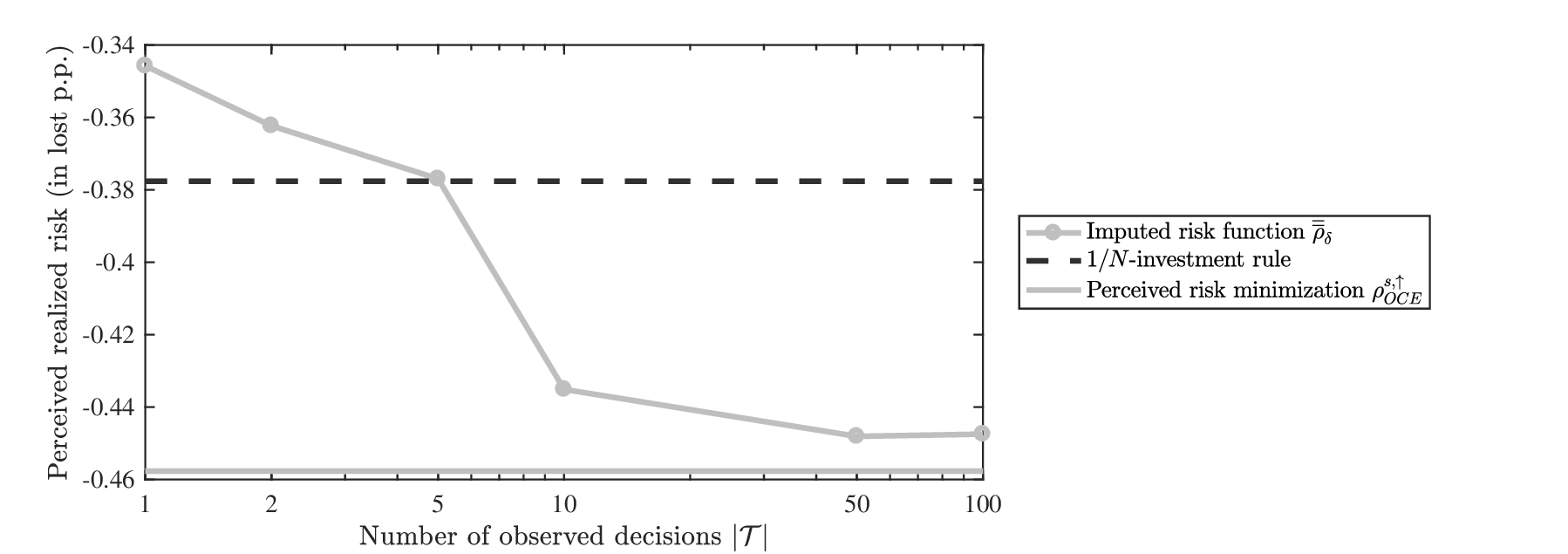}
(b)	
\end{minipage}
\begin{minipage}{\textwidth}
\centering
\includegraphics[width=\textwidth]{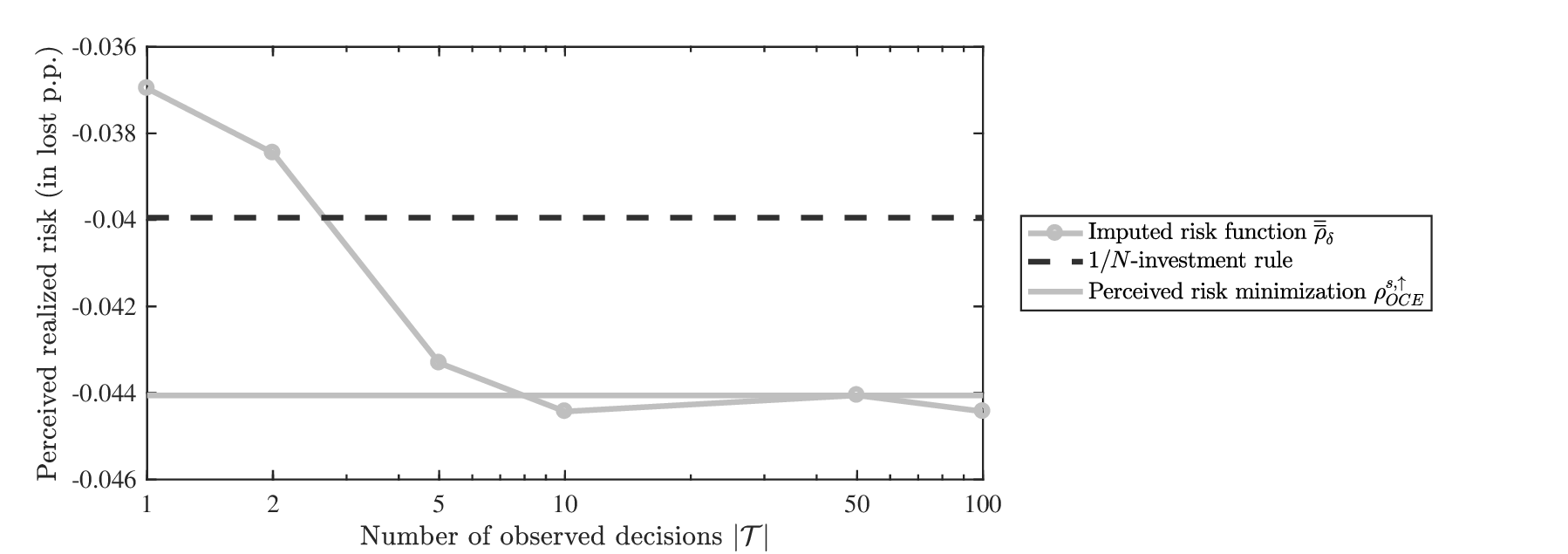}
(c)	
\end{minipage}
\caption{Comparison of the perceived realized risk (in lost percentile points) for the portfolio obtained using either imputed risk function $\bars{\rho}_{\delta}$ or $1/N$-investment rule with up to 100 observed decisions. We also report the best average realized risk that could be obtained if the representation of this perception was exactly known. In particular, the above true risk function $\rho_{OCE}^{s,\uparrow}$ is set by (a) $s=0.1$, (b) $s=1$, and (c) $s=10$.}
\label{fig:InSample5}
\end{figure}

\section{Conclusions}
We proposed in this paper a non-parametric inverse optimization framework for risk averse optimization problems involving convex risk functions. The framework allows one to impute a risk function based on preference information acquired from multiple sources, including preference elicitation, observed decisions made for a forward problem, and a reference risk function. The framework can be flexibly applied to generate a risk function based on different criteria. We showed that in general the solution can be efficiently identified by solving convex programs and demonstrated in numerical experiments that the imputed risk function that incorporates the information of past decisions could indeed generate risk estimates that are significantly closer to the true risk level. In addition to the portfolio selection application considered in this paper, other applications such as homeland security (\cite{Haskell:2018aa}) could also benefit from the development of our framework. We leave the study of these other applications as our future work.

\ACKNOWLEDGMENT{
The author gratefully acknowledges that this research has been supported by the Canadian Natural Sciences and Engineering Research Council under [Grant RGPIN-2014-05602]. The author is indebted to the support of Li Lily Liu in preparing this manuscript. The author thanks the associate editor and three reviewers for their detailed comments and suggestions that greatly improved the quality of the paper. 
}


\bibliographystyle{ormsv080}
\bibliography{refs2}

\ECSwitch
\ECHead{Additional Material}

\counterwithin{theorem}{section}
\counterwithin{lemma}{section}
\counterwithin{example}{section}
\counterwithin{figure}{section}
\section{Results about conjugate duality theory and the function $\rho_{\delta} \in {\cal L}(\{\vec{Z}_j\}_{j\in {\bar {\cal J}}}, \bar{{\cal C}})$ }\label{ssec00}

Recall that the conjugate $\rho^{*}$ of a function $\rho:\Re^{|\Omega|}\rightarrow\Re$
is defined as
$
\rho^{*}(p)=\sup_{\vec{Z}} \left\{p^{\top}\vec{Z}-\rho(\vec{Z}) \right\},
$
and the biconjugate $\rho^{**}$ of $\rho$ is defined as
$
\rho^{**}(\vec{Z})=\sup_{p}\left\{p^{\top}\vec{Z}-\rho^{*}(p) \right\}.
$

\begin{theorem} \label{rock}
(Conjugate Duality Theory (see, e.g.,  \cite{Rockafellar:1974aa}
for detailed proofs)) 
Given any function $\rho:\Re^{|\Omega|}\rightarrow \Re$,
the biconjugate $\rho^{**}$ satisfies $\rho^{**}\leq\rho$, and if $\rho$ is 
convex, then the following must hold.
\begin{enumerate}
\item $\rho=\rho^{**}$, and
\item given any given $\vec{Z}^{*}\in{\rm dom}(\mbox{\ensuremath{\rho}})$
such that $\rho(\vec{Z}^{*})$ is subdifferentiable at $\vec{Z}^{*}$,
the subdifferential satisfies $\partial\rho(\vec{Z}^{*})=\partial\rho^{**}(\vec{Z}^{*})=\arg\max_{p} \left\{p^{\top}\vec{Z}^{*}-\rho^{*}(p) \right\}$.
\end{enumerate}
\end{theorem}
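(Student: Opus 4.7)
The plan is to convert the infinite-dimensional inverse problem (\ref{eq:inv}) into a finite-dimensional convex program by parametrizing $\rho$ through its conjugate $\rho^*$, and then to reduce the three infinite-dimensional structures --- the $||\cdot||_{\infty}$ objective, the forward-optimality embeddings, and the pairwise comparisons --- to finitely many convex constraints. Polynomial solvability will then follow by running the ellipsoid method on the resulting program using the assumed separation oracle for ${\cal C}$ together with the assumed linear-$p$ forward oracle.

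I would first replace the decision $\rho$ by its proper, lsc, convex conjugate $\rho^*$ using Lemma \ref{repconv} and Theorem \ref{rock}(1), so that $\rho=\rho^{**}$ and $\mathrm{dom}(\rho^*) \subseteq \Re_+^{|\Omega|}$. A key identity is $||\rho - \tilde{\rho}_{cvx}||_{\infty} = ||\rho^* - \tilde{\rho}^*_{cvx}||_{\infty}$, obtained by taking suprema on both sides of the pointwise band $-t \le \rho(\vec{Z}) - \tilde{\rho}_{cvx}(\vec{Z}) \le t$, which dualizes to $\tilde{\rho}^*_{cvx}(p) - t \le \rho^*(p) \le \tilde{\rho}^*_{cvx}(p) + t$. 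Since $\tilde{\rho}^*_{cvx}$ is piecewise linear on $\Re_+^{|\Omega|} \cap {\cal C}$ and $+\infty$ outside, this dual-side bound forces $\mathrm{dom}(\rho^*) \subseteq \Re_+^{|\Omega|} \cap {\cal C}$ and reduces the objective to a finite list of affine band constraints (one per vertex $\vec{Y}_v$ of $\tilde{\rho}^*_{cvx}$) together with ${\cal C}$-membership. For the forward optimality, I would apply Sion's minimax to the saddle problem $\min_{x \in {\cal X}(d)} \sup_p \{p^{\top} \vec{Z}(x) - \rho^*(p)\}$; convexity in $x$ is guaranteed by the assumed componentwise convexity of $\vec{Z}(x)$ together with $p \ge 0$, and concavity in $p$ by convexity of $\rho^*$. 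The saddle-point characterization then says that $x^*(d)$ is optimal iff there exists $p^*(d) \in \Re_+^{|\Omega|} \cap {\cal C}$ such that (a) $x^*(d) \in \arg\min_{x \in {\cal X}(d)} p^*(d)^{\top} \vec{Z}(x)$ and (b) the Fenchel--Young equality $\rho^*(p^*(d)) + \rho(\vec{Z}(x^*(d))) = p^*(d)^{\top} \vec{Z}(x^*(d))$, the latter being Theorem \ref{rock}(2) in disguise. Promoting each $p^*(d)$ to a decision variable converts the infinite family of forward inequalities into a single equality plus (a), which is tractable by assumption (1); the pairwise constraints $\rho(\vec{L}_k) \le \rho(\vec{U}_k)$ are handled analogously via auxiliary subgradients and values at $\vec{L}_k, \vec{U}_k$ combined with the subgradient inequality.

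The resulting program has finitely many variables --- $t$, the $p^*(d)$'s, the vertex offsets of $\tilde{\rho}^*_{cvx}$, and the values of $\rho, \rho^*$ at the finitely many relevant points --- together with convex constraints: nonnegativity and ${\cal C}$-membership for subgradients, subgradient inequalities certifying compatibility with some $\rho \in {\cal R}_{cvx}$, the band constraints inherited from the $L^{\infty}$ reduction, and the Fenchel--Young/forward-optimality equalities. The ellipsoid method with the assumed oracles then solves it in polynomial time. The main obstacle is proving that this finite tabulation \emph{faithfully captures} all feasible $\rho$ within the $||\cdot||_{\infty}$ bound: one direction is routine (convexity forces the subgradient inequalities), but the other requires explicitly constructing an admissible $\rho \in {\cal R}_{cvx}$ realizing the specified values and subgradients while remaining in the band --- for example via the pointwise maximum $\max\{\tilde{\rho}_{cvx}(\vec{Z}) - t,\; \max_j \{p_j^{\top} \vec{Z} - (p_j^{\top} \vec{Z}_j - v_j)\}\}$. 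Verifying that this candidate truly lies in ${\cal R}_{cvx}$ and respects the upper band is where the dual-${\cal C}$-piecewise-linear structure of $\tilde{\rho}_{cvx}$ is essential, since it gives the finitely many affine ceilings against which the constructed maximum must be checked.
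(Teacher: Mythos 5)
Your proposal does not prove the statement in question. The statement is Theorem \ref{rock}, a classical result of conjugate duality: the inequality $\rho^{**}\leq\rho$ for arbitrary $\rho$, the Fenchel--Moreau biconjugation identity $\rho=\rho^{**}$ for proper, lower semicontinuous, convex $\rho$, and the characterization of the subdifferential $\partial\rho(\vec{Z}^{*})=\partial\rho^{**}(\vec{Z}^{*})=\arg\max_{p}\{p^{\top}\vec{Z}^{*}-\rho^{*}(p)\}$. What you have written instead is a strategy for reformulating the inverse optimization problem (\ref{eq:inv}) as a finite-dimensional convex program and establishing its polynomial solvability --- that is, an outline of a proof of Theorem \ref{convexthm}, which \emph{uses} Theorem \ref{rock} as one of its ingredients. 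Indeed your text explicitly invokes ``Theorem \ref{rock}(1)'' and ``Theorem \ref{rock}(2)'' as tools, which makes the argument circular as a proof of Theorem \ref{rock} itself and confirms that you have targeted the wrong statement.

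For the record, the paper offers no proof of Theorem \ref{rock}; it is quoted from \cite{Rockafellar:1974aa}. A self-contained proof would proceed along entirely different lines from anything in your proposal: the inequality $\rho^{**}\leq\rho$ is immediate from the Fenchel--Young inequality $p^{\top}\vec{Z}-\rho^{*}(p)\leq\rho(\vec{Z})$ applied pointwise; the equality $\rho=\rho^{**}$ for proper lsc convex $\rho$ is proved by separating a point $(\vec{Z},t)$ with $t<\rho(\vec{Z})$ from the closed convex epigraph of $\rho$ by a hyperplane and showing the resulting affine minorant is of the form $p^{\top}\cdot-\rho^{*}(p)$; and the subdifferential claim follows from the equivalence $p\in\partial\rho(\vec{Z}^{*})\Leftrightarrow\rho(\vec{Z}^{*})+\rho^{*}(p)=p^{\top}\vec{Z}^{*}$, combined with $\rho(\vec{Z}^{*})=\rho^{**}(\vec{Z}^{*})=\sup_{p}\{p^{\top}\vec{Z}^{*}-\rho^{*}(p)\}$, which identifies the equality cases with the maximizers. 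None of these steps appears in your proposal. If your intent was to prove Theorem \ref{convexthm}, your outline is broadly consistent with the paper's three-step route through Propositions \ref{pro1}--\ref{pro3}, but that is not the statement you were asked to prove.
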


In the following lemma, we provide the conjugate of the risk function $\rho_{\delta} \in {\cal L}(\{\vec{Z}_j\}_{j\in {\bar {\cal J}}}, \bar{{\cal C}})$.
\begin{lemma} \label{lemma1}
The conjugate function of the risk function $\rho_{\delta} \in {\cal L}(\{\vec{Z}_j\}_{j\in {\bar {\cal J}}}, \bar{{\cal C}})$ admits the form 
\[
\rho_{\delta}^{*}(y)=\begin{cases}
\max_{j\in {\bar {\cal J}}}\{y^{\top}\vec{Z}_{j}-\delta_{j}^{*}\} & {\rm if}\; y\in \bar{{\cal C}}\\
\infty &  {\rm if}\;  y \notin \bar{{\cal C}}
\end{cases},
\]
where $\delta_{j}^{*}:=\rho_{\delta}(\vec{Z}_{j})$, $\forall j\in {\bar {\cal J}}$.
\end{lemma}
\begin{proof}{Proof of Lemma \ref{lemma1}}
By definition, we have 
$$\rho_{\delta}^{*}(y)=\sup_{\vec{Z}}\{y^{\top}\vec{Z}-\rho_{\delta}(\vec{Z})\}\geq\max_{j\in {\bar {\cal J}}}\{y^{\top}\vec{Z}_{j}-\rho_{\delta}(\vec{Z}_{j})\}=\max_{j\in {\bar {\cal J}}}\{y^{\top}\vec{Z}_{j}-\delta_{j}^{*}\}.$$ 
To show the other direction, note first that 
\begin{eqnarray}
  \rho_{\delta}^*(y) && = \sup_{\vec{Z}} \left\{ y^\top \vec{Z}-\rho_{\delta}(\vec{Z}) \right\} \nonumber \\
                && = \sup_{\vec{Z}} \left\{ y^\top \vec{Z} - 
                               \left( \sup_{p \in \bar{{\cal C}}} p^\top \vec{Z} - \max_{j \in {\bar {\cal J}}}
                               \left\{p^\top \vec{Z}_j - \delta_j\right\} \right) \right\} \nonumber \\
                &&= \sup_{\vec{Z}}\inf_{p \in \bar{{\cal C}}}  \left\{ (y-p)^\top  \vec{Z}           
                               + \max_{j \in {\bar {\cal J}}}    \left\{p^\top \vec{Z}_j - \delta_j\right\} \right\} \nonumber\\
                &&=  \inf_{p \in \bar{{\cal C}}}\sup_{\vec{Z}}  \left\{ (y-p)^\top  \vec{Z}           
                               + \max_{j \in {\bar {\cal J}}}    \left\{p^\top \vec{Z}_j - \delta_j\right\} \right\} \nonumber  \\  
                &&=    \max_{j \in {\bar {\cal J}}}    \left\{y^\top \vec{Z}_j - \delta_j\right\},\; \text{if } y \in \bar{{\cal C}},\;\text{or } \infty \text{ otherwise},         \nonumber
 \end{eqnarray}
where the fourth equality follows the Sion's minimax theorem (\cite{Sion:1958aa}), as the function is linear in $\vec{Z}$, convex in $p$, and the set $\bar{{\cal C}}$ is compact.

We can then show that 
$$\rho_{\delta}^*(y) = \max_{j\in {\bar {\cal J}}} \left\{y^{\top}\vec{Z}_{j}-\delta_{j} \right\}  \leq  \max_{j\in {\bar {\cal J}}}\left\{y^\top \vec{Z}_j - \delta_j^*\right\},\; {\rm for}\; y \in \bar{{\cal C}},$$ since 
$$\delta_j^* = \rho_{\delta}(\vec{Z}_{j})  =  \sup_{p\in\bar{{\cal C}}} \left\{p^{\top}\vec{Z}_{j}-\max_{i\in {\bar {\cal J}}} \left\{p^{\top}\vec{Z}_{i}-\delta_{i}\right\}\right\}
  \leq  \sup_{p\in\bar{{\cal C}}} \left\{p^{\top}\vec{Z}_{j}-(p^{\top}\vec{Z}_{j}-\delta_{j}) \right\}=\delta_{j}.$$\Halmos
\end{proof}

\newpage
\section{Dual representation of convex risk functions} \label{ssec01}
\begin{theorem} (see, e.g.,  \cite{Ruszczynski:2006aa}) \label{repconv}
A risk function $\rho$ satisfies the axiom of monotonicity, convexity, and translation invariance
if any only if it admits the representation 
\begin{equation} \label{sup}
\rho(\vec{Z})=\sup_{p \in \Delta}\{p^{\top}\vec{Z}-\rho^{*}(p)\}, 
\end{equation}
where $\Delta := \left\{p \in \Re^{|\Omega|}\;\middle |\; \vec{1}^\top p = 1,\;p\geq 0   \right\}$.
\end{theorem}

The following is a list of risk functions that have been considered in this paper as the candidates for the reference risk function $\tilde{\rho}$.
\begin{table}[h]
\center
\begin{tabular}{|c|c|}
\hline 
Risk function & Formulation\tabularnewline
\hline 
\hline 
Maximum loss & $\max_{i}\{Z(\omega_{i})\}$\tabularnewline
\hline 
Expectation & $\mathbb{E}[Z]$\tabularnewline
\hline 
Mean-absolute-deviation & $\mathbb{E}[Z]+\gamma \mathbb{E}[|Z-\mathbb{E}[Z]|]$, $\gamma \in [0,\frac{1}{2}]$\tabularnewline
\hline 
Mean-upper-semideviation & $\mathbb{E}[Z]+\gamma(\mathbb{E}[([Z-\mathbb{E}[Z]]^{+})^{s}])^{1/s}$, $\gamma \in [0,1]$, $s \geq 1$ \tabularnewline
\hline 
Conditional Value-at-Risk (CVaR) &  $\frac{1}{1-\alpha} \int_{\alpha}^1 F_{Z}^{-1}(t)dt$, $\alpha \in [0,1)$ \tabularnewline
\hline 
Spectral risk measures & $\int_{0}^{1}F_{Z}^{-1}(t)\phi(t)dt$, \tabularnewline
& $\phi$ is nonnegative, non-decreasing, and $\int_0^1 \phi(t)dt = 1$ \tabularnewline
\hline 
\hline 
\end{tabular}
\caption{Several well-known risk functions, where $F_Z^{-1}$ stands for the generalized inverse distribution function. }
\label{table:riskmeasures}
\end{table}

All the risk functions in Table \ref{table:riskmeasures} are law invariant convex risk functions, and we provide their dual representations based on (\ref{sup}). In Example \ref{Ex2} to \ref{Ex5}, we denote by $\bar{p} \in \Re^{|\Omega|}$ the probability mass function  (i.e., $\bar{p}_i = \mathbb{P}(\{\omega_i\})$, $\omega_i \in \Omega$). 

\begin{example} \label{Ex1} (Maximum loss) Its dual representation is simply (\ref{sup}) with 
$\rho^*(p) = 0$.
\end{example}
\begin{example} \label{Ex2} (Expectation) Its dual representation is also trivial: (\ref{sup}) with 
$\rho^*(p) = 0$ and $\Delta$ replaced by ${\cal C}:= \left\{q\; \middle |\; q = \bar{p}\right\}$. 
\end{example}
\begin{example}
(Mean-absoulte-deviation) Its dual  representation has been studied in \cite{Ruszczynski:2006aa}: namely, (\ref{sup}) with $\rho^*(p) = 0$ and $\Delta$ replaced by ${\cal C}:= \left\{q\; \middle |\; q_i = \bar{p}_i(1 + \gamma(h_i - \sum_{i} \bar{p}_i h_i)),\;  ||h||_{\infty} \leq 1\right\}$ and $h \in \Re^{|\Omega|}$.
\end{example}
\begin{example}
(Mean-upper-semideviation) The derivation of its dual representation is similar to that of the previous example (see, e.g., \cite{Ruszczynski:2006aa}); namely, the representation is given by
$$ \rho(\vec{Z}) = \max_{p \in {\cal C}} p^\top \vec{Z}, $$
where ${\cal C}:= \left\{q\; \middle |\; q_i = \bar{p}_i(1 + \gamma(h_i - \sum_{i} \bar{p}_i h_i)),\; \sum_{i} \bar{p}_i |h_i|^{t} \leq 1,\; h \geq 0\right\}$, $h \in \Re^{|\Omega|}$ and $t = \frac{s}{s-1}$. 
\end{example}

\begin{example} \label{Ex5} (Conditional Value-at-Risk (CVaR))
The following dual representation of CVaR is fairly standard: 
$$ \rho(\vec{Z}) = \max_{p \in {\cal C}} p^\top \vec{Z}, $$
where ${\cal C}:= \left\{ q \; \middle |\; q_i \leq \frac{1}{1-\alpha} \bar{p}_i,\; 1^\top q = 1,\; q \geq 0 \right\}.$
\end{example}

Note that in the following example we assume that the probability mass function $\bar{p}$ is uniform so that we can write the dual representation in terms of permutation operators. This provides the basis to derive a more general representation in Example \ref{spec2}.

\begin{example}\label{specex} (Spectral risk measures) Given that $\mathbb{P}(\{\omega_i\}) = 1/M$ for $\omega_i \in \Omega$, any spectral risk measure can be equivalently written as
\footnote{Indeed, letting $\bar{g} : \{1,...,M\} \rightarrow \{1,...,M\}$ be an one-to-one mapping such that 
$Z(\omega_{\bar{g}(1)}) \leq Z(\omega_{\bar{g}(2)}) \leq \cdots \leq Z(\omega_{\bar{g}(M)})$, we have for any $t \in (\frac{j-1}{M}, \frac{j}{M}]$, $j \in \{1,...,M\}$, $F_Z^{-1}(t) = Z(\omega_{\bar{g}(j)})$ must hold. This can be verified via the definition $ F^{-1}_Z(t) = \inf \{z \; :\; \sum_{Z(\omega_i) \leq z} \mathbb{P}(\{\omega_i\}) \geq t\}$. Namely, since $\mathbb{P}(\{\omega_i\}) = \frac{1}{M}$ we must have $\sum_{Z(\omega_i) \leq Z(\omega_{\bar{g}(j)})}\mathbb{P}(\{\omega_i\}) \geq t^*$, $t^* \in (\frac{j-1}{M}, \frac{j}{M}]$. And there must not exist $z < Z(\omega_{\bar{g}(j)})$ such that $\sum_{Z(\omega_i) \leq z} \mathbb{P}(\{\omega_i\}) \geq t^*$ because such $z$ must be $z \in \{ Z(\omega_{\bar{g}(k)})\}_{k=1,...,j-1}$ and it contradicts the fact that 
$\sum_{Z(\omega_i) \leq Z(\omega_{\bar{g}(k)})} \mathbb{P}(\{\omega_i\}) \leq \frac{k}{M}$ for any $k \in \{1,...,j-1\}$. Hence, we can write $\rho(\vec{Z}) = \sum_{j=1}^M(\int_{\frac{j-1}{M}}^{\frac{j}{M}}
F_Z^{-1}(t)\phi(t)dt) = \sum_{j=1}^M(\int_{\frac{j-1}{M}}^{\frac{j}{M}}\phi(t)dt)Z(\omega_{\bar{g}(j)})
= \sum_{j=1}^M \phi_j Z(\omega_{\bar{g}(j)})$, where $\phi_j := \int_{\frac{j-1}{M}}^{\frac{j}{M}}\phi(t)dt$.}:
$$\rho(\vec{Z})=\phi^{\top}\varphi(\vec{Z}),$$
where $\varphi:\Re^{M}\rightarrow\Re^{M}$ denotes an ordering operator such that  $\varphi(\vec{Z})_{1}\leq\cdots\leq\varphi(\vec{Z})_{M}$, and 
$\phi\in\Re^{M}$ satisfies $\phi \geq 0$, $\sum_{i=1}^M \phi_i = 1$, and $\phi_{1}\leq\cdots\leq\phi_{M}$. It is not difficult to verify that the following dual representation 
attains the same optimal value as the one above:
$$ \rho(\vec{Z}) = \max_{p,\sigma} \left\{ p^\top \vec{Z} \;\middle |\; p = \sigma(\phi),\; \sigma \in \Sigma \right\}, $$
where $\sigma$ is an operator that permutes a $M$-dimensional vector, and $\Sigma$ is the set of all such operators. It can be further reformulated as follows using the convex hull operator $\text{Conv}(\cdot)$:
$$ \rho(\vec{Z}) = \max_{p \in \Re^{M}_+ \cap {\cal C}} p^\top \vec{Z}, $$
where ${\cal C}:= \left\{ q \; \middle |\; q \in \text{Conv}(\{\sigma(\phi), \sigma \in \Sigma\})\right\}. $
\end{example}

\newpage
\section{Additional Results} \label{ssec02}
\begin{lemma}  \label{lemo}
Let $\rho := \sup_{\rho' \in {\cal R}_0} \rho'$, where ${\cal R}_0:= {\cal R}\; ({\rm or}\; {\cal R}_F) \cap {\cal R}_{inv}(\gamma^*) \cap {\cal R}_{el}(\{(L_k,U_k)\}_{k\in {\cal K}}) \cap {\cal R}_{ref}(\epsilon^*)$ for some fixed $\gamma^* \in [0,\infty)$ and $\epsilon^* \in (0, \infty]$. We have $ \rho \in {\cal R}_0.$
\end{lemma}
\begin{proof}{Proof of Lemma \ref{lemo}}
We skip the steps of confirming that $\rho \in {\cal R}\; ({\rm or}\; {\cal R}_F)$ and $\rho \in  {\cal R}_{el}(\{(L_k,U_k)\}_{k\in {\cal K}})$ because these steps can be found in Proposition 1 in \cite{Delage:2015aa}. We are left to show that $\rho \in {\cal R}_{inv}(\gamma^*) \cap {\cal R}_{ref}(\epsilon^*)$. We can confirm that $\rho \in {\cal R}_{inv}(\gamma^*)$, because
\[
\rho(\vec{W}^t) = \sup_{\rho' \in {\cal R}_0} \rho'(\vec{W}^t) 
\leq \sup_{\rho' \in {\cal R}_0} \rho'(\vec{W}) + \gamma^*
= \rho(\vec{W}) + \gamma^*,\; \forall \vec{W} \in {\cal W}^t, \]
where we use the fact that the condition $\rho'(\vec{W}^t) \leq \rho'(\vec{W})+\gamma^*,\;\; \forall \vec{W} \in {\cal W}^t$ holds for any $\rho' \in {\cal R}_0$.\\
We can also confirm that $\rho \in {\cal R}_{ref}(\epsilon^*)$, because
\[ 
\rho(\vec{Z}) =  \sup_{\rho' \in {\cal R}_0} \rho'(\vec{Z}) \leq 
\tilde{\rho}(\vec{Z}) +\epsilon,\; \forall Z,\]
and
\[   \rho(\vec{Z}) = \sup_{\rho' \in {\cal R}_0} \rho'(\vec{Z})  \geq \rho''(\vec{Z}) \geq 
\tilde{\rho}(\vec{Z}) - \epsilon,\; \forall Z, \]
where $\rho''$ is some function satisfying $\rho'' \in {\cal R}_0$. In the last inequality of the first line we use the fact that the condition $-\epsilon \leq \rho' - \tilde{\rho} \leq \epsilon$ holds for any $\rho' \in {\cal R}_0$. \Halmos
\end{proof}

\begin{lemma} \label{lem2}
A risk function $\rho$ satisfies $\rho \in \bars{{\cal R}}$ if and only if it admits a supremum representation $\rho(\vec{Z})=\sup_{p}\left\{p^{\top}\vec{Z}-\rho^{*}(p)\right\}$ that satisfies $\rho^*(p) = \rho^*(\sigma(p)), \; \forall \sigma \in \Sigma$. As a result, a coherent risk measure $(\ref{supcoh})$ is permutation invariant if and only if its support set ${\cal C}$ satisfies $p \in {\cal C} \Leftrightarrow \sigma(p) \in {\cal C}$.
\end{lemma}
\begin{proof}{Proof of Lemma \ref{lem2}}
We can first show that $\rho$ is indeed permutation invariant given $\rho^*(\sigma(p))=\rho^*(p)$, since
\begin{equation}
\rho(\sigma(\vec{Z})) = \sup_{p} p^\top \sigma(\vec{Z}) - \rho^*(p)
= \sup_{p} \sigma^{-1}(p)^\top \vec{Z} - \rho^*(p)
= \sup_{p} p^\top \vec{Z} - \rho^*(\sigma(p))
= \rho(\vec{Z}).
\end{equation}
To show the other direction, we apply the definition of the conjugate and have for any $\sigma\in\Sigma$,
$$\rho^*(\sigma(p)) = \sup_{\vec{Z}} \vec{Z}^\top \sigma(p) - \rho (\vec{Z}) = 
\sup_{\vec{Z}} \sigma^{-1}(\vec{Z})^\top p - \rho (\vec{Z}) 
=\sup_{\vec{Z}} \vec{Z}^\top p - \rho(\sigma(\vec{Z})),$$
and hence for any permutation-invariant $\rho$  (i.e., $\rho(\sigma(\vec{Z}))=\rho(\vec{Z})$), we must have $\rho^*(\sigma(p))=\rho^*(p)$. In the case of coherent risk measure, the fact that $\rho^*(p)$ satisfies $\rho^*(p) = 0$ for $p \in {\cal C}$ and $\infty$ otherwise implies immediately the symmetry of ${\cal C}$. \Halmos
\end{proof}

\begin{corollary} \label{inv3a}
Under the same assumption in Proposition \ref{prop_final}, the inverse optimization problem (\ref{eq:inv2}) can be solved by the risk function (\ref{mod111}), and the parameter $\delta$ is calculated by solving the convex program 
\begin{eqnarray}
\min_{\delta,y_{j},v_{i,j},w_{i,j}, \gamma} && \sum_{t \in {\cal T}} \gamma_t\nonumber \\
 {\rm subject\;to}&& \vec{1}^{\top}v_{i,j}+\vec{1}^{\top}w_{i,j}\leq\delta_{i}-\delta_{j}+y_{j}^{\top}\vec{X}_{j},\;\;\; \forall j \in {\cal J},\;\forall i\neq j, \nonumber \\
 && \vec{X}_{i}y_{j}^{\top}-v_{i,j}\vec{1}^{\top}-\vec{1}w_{i,j}^{\top}\leq 0,\;\;\; \forall j \in {\cal J},\; \forall i\neq j, \nonumber \\
 && y_j \in \bars{{\cal C}}, \;\;\; \forall j \in {\cal J}, \nonumber \\
 && y_{t}^{\top}\vec{X}_{t}\leq h_t(y_{t}) + \gamma_t,\;\;\; \forall t \in {\cal T}, \nonumber\\
 && \delta_{i}\leq\delta_{j},\;\;\; \forall(i,j)\in{\cal B}, \nonumber\\
 && |\delta_j - \tilde{\rho}(\vec{X}_j)| \leq \epsilon^*, \;\;\;\forall  j \in {\cal J},\nonumber
\end{eqnarray}
where $\delta \in \Re^{|{\cal J}|}$, $y_{j}\in\Re^{M}$, $v_{i,j}\in\Re^{M}$, $w_{i,j}\in\Re^{M}$, and $\gamma \in \Re^{|{\cal T}|}$; the set ${\cal B}:= \left\{(i,j)\in\{1,2,...,{\cal J}\}^{2}\; \middle |\;(\vec{X}_{i},\vec{X}_{j})\in\{(\vec{L}_{k},\vec{U}_{k})\}_{k\in {\cal K}} \right\}$; 
and $h_t$ denotes the function $h_t(y):=\min_{x}\left\{y^{\top}\vec{Z}^t(x) \;\middle |\;x\in{\cal X}^{t}\right\}$. 
\end{corollary}

\begin{corollary} \label{inv3b}
Under the same assumption in Proposition \ref{prop_final}, the inverse optimization problem (\ref{eq:inv3}) can be solved by the risk function (\ref{mod111}), and the parameter $\delta$ is calculated by solving the convex program 
\begin{eqnarray}
\max_{\delta,y_{j},v_{i,j},w_{i,j}} && \sum_{j \in {\cal J}} \delta_j \nonumber \\
 {\rm subject\; to}&& \vec{1}^{\top}v_{i,j}+\vec{1}^{\top}w_{i,j}\leq\delta_{i}-\delta_{j}+y_{j}^{\top}\vec{X}_{j},  \;\;\; \forall j \in {\cal J},\;\forall i\neq j, \nonumber \\
 && \vec{X}_{i}y_{j}^{\top}-v_{i,j}\vec{1}^{\top}-\vec{1}w_{i,j}^{\top}\leq 0, \;\;\; \forall j \in {\cal J}, \;\forall i\neq j, \nonumber \\
 && y_j \in \bars{{\cal C}}, \;\;\; \forall j \in {\cal J}, \nonumber \\
 && y_{t}^{\top}\vec{X}_{t}\leq h_t(y_{t}) + \gamma_t^*,  \;\;\; \forall t \in {\cal T}, \nonumber\\
 && \delta_{i}\leq\delta_{j}, \;\;\; \forall(i,j)\in{\cal B}, \nonumber \\
 && |\delta_j - \tilde{\rho}(\vec{X}_j)| \leq \epsilon^*,  \;\;\; \forall  j \in {\cal J},\nonumber
\end{eqnarray}
where $\delta \in \Re^{|{\cal J}|}$, $y_{j}\in\Re^{M}$,   $v_{i,j}\in\Re^{M}$, $w_{i,j}\in\Re^{M}$; the set ${\cal B}:= \left\{(i,j)\in\{1,2,...,{\cal J}\}^{2}\;\middle |\;(\vec{X}_{i},\vec{X}_{j})\in\{(\vec{L}_{k},\vec{U}_{k})\}_{k\in {\cal K}}\right\}$; 
and $h_t$ denotes the function $h_t(y):=\min_{x}\left\{y^{\top}\vec{Z}^t(x) \;\middle |\;x\in{\cal X}^{t}\right\}$. 
\end{corollary}

\begin{corollary} \label{inv4a}
Under the same assumption in Proposition \ref{lastpros}, the inverse optimization problem (\ref{eq:inv2}) can be solved by the risk function (\ref{pro6law}), and the parameter $\delta$ is calculated by solving the convex program 
\begin{eqnarray}
\min_{\delta,y_{j},v_{i,j},w_{i,j},\gamma} && \sum_{t \in {\cal T}} \gamma_t \nonumber \\
 {\rm subject\; to} && \vec{1}^{\top}v_{i,j}+\vec{1}^{\top}w_{i,j}\leq\delta_{i}-\delta_{j}+y_{j}^{\top}\vec{S}_{j},  \;\;\; \forall j \in {\cal J},\; \forall i\neq j, \nonumber\\
 && \vec{S}_{i}y_{j}^{\top}-\Lambda_{i,j} \circ (v_{i,j}\vec{1}^{\top})-\vec{1}w_{i,j}^{\top}\leq0 , \;\;\; \forall j \in {\cal J},\;   \forall i\neq j, \nonumber \\
 && y_j \in {\cal C}_j \subseteq \Re^{\tau_j}_+ ,  \;\;\; \forall j \in {\cal J},\nonumber \\
 && y_{t}^{\top}\vec{S}_t \leq h_t(y_{t}) + \gamma_t, \;\;\; \forall t \in {\cal T}, \nonumber\\
 && \delta_{i}\leq\delta_{j} , \;\;\; \forall(i,j)\in{\cal B}, \nonumber\\
 && |\delta_j - \tilde{\rho}(\vec{X}_j)| \leq \epsilon^*,  \;\;\; \forall  j \in {\cal J},\nonumber
\end{eqnarray}
where $\delta \in \Re^{|{\cal J}|}$, $y_j \in \Re^{\tau_j}$, $v_{i,j} \in \Re^{\tau_i}$, $w_{i,j} \in \Re^{\tau_j}$, and $\gamma \in \Re^{|{\cal T}|}$; $\vec{S}_t:=(Z(x^{t},\xi^t_1),...,Z(x^{t},\xi^t_{\tau_0^t}))^\top$; the set ${\cal B}:=\left\{(i,j)\in\{1,2,...,{\cal J}\}^{2}\; \middle |\;(\vec{X}_{i},\vec{X}_{j})\in\{(\vec{L}_{k},\vec{U}_{k})\}_{k\in {\cal K}}\right\}$; and $h_t$ denotes the function $h_t(y):=\min_{x} \left\{\sum_{o=1}^{\tau_0^t} y_o Z(x,\xi^t_o) \;\middle |\;x\in{\cal X}^{t}\right\}$. The coefficient $\Lambda_{i,j}$ is calculated by $(\Lambda_{i,j})_{m,n} = \bar{p}_n^{j}/\bar{p}^i_m$, $n=1,...,\tau_j$, $m=1,...,\tau_i$. Moreover, the above sets ${\cal C}_j$, $j \in {\cal J}$, can be derived as indicated in Proposition \ref{lastpros}.
\end{corollary}

\begin{corollary} \label{inv4b}
Under the same assumption in Proposition \ref{lastpros}, the inverse optimization problem (\ref{eq:inv3}) can be solved by the risk function (\ref{pro6law}), and the parameter $\delta$ is calculated by solving the convex program
\begin{eqnarray}
\max_{\delta,y_{j},v_{i,j},w_{i,j}} &&  \sum_{j \in {\cal J}} \delta_j  \nonumber \\
 {\rm subject\; to}&& \vec{1}^{\top}v_{i,j}+\vec{1}^{\top}w_{i,j}\leq\delta_{i}-\delta_{j}+y_{j}^{\top}\vec{S}_{j}, \;\;\; \forall j \in {\cal J},\; \forall i\neq j, \nonumber \\
 && \vec{S}_{i}y_{j}^{\top}-\Lambda_{i,j} \circ (v_{i,j}\vec{1}^{\top})-\vec{1}w_{i,j}^{\top}\leq0 ,  \;\;\; \forall j \in {\cal J},\;   \forall i\neq j, \nonumber \\
 && y_j \in {\cal C}_j \subseteq \Re^{\tau_j}_+ , \;\;\; \forall j \in {\cal J},\nonumber \\
 && y_{t}^{\top}\vec{S}_t \leq h_t(y_{t})+ \gamma_t^*, \;\;\; \forall t \in {\cal T}, \nonumber\\
 && \delta_{i}\leq\delta_{j} ,  \;\;\; \forall(i,j)\in{\cal B}, \nonumber \\
 && |\delta_j - \tilde{\rho}(\vec{X}_j)| \leq \epsilon^*,  \;\;\; \forall  j \in {\cal J},\nonumber
\end{eqnarray}
where $\delta \in \Re^{|{\cal J}|}$, $y_j \in \Re^{\tau_j}$, $v_{i,j} \in \Re^{\tau_i}$, and $w_{i,j} \in \Re^{\tau_j}$;  $\vec{S}_t:=(Z(x^{t},\xi^t_1),...,Z(x^{t},\xi^t_{\tau_0^t}))^\top$; the set ${\cal B}:= \left\{(i,j)\in\{1,2,...,{\cal J}\}^{2}\; \middle |\;(\vec{X}_{i},\vec{X}_{j})\in\{(\vec{L}_{k},\vec{U}_{k})\}_{k\in {\cal K}}\right\}$; and $h_t$ denotes the function $h_t(y):=\min_{x} \left\{\sum_{o=1}^{\tau_0^t} y_o Z(x,\xi^t_o) \;\middle |\;x\in{\cal X}^{t}\right\}$. The coefficient $\Lambda_{i,j}$ is calculated by $(\Lambda_{i,j})_{m,n} = \bar{p}_n^{j}/\bar{p}^i_m$, $n=1,...,\tau_j$, $m=1,...,\tau_i$. Moreover, the above sets  ${\cal C}_j$, $j \in {\cal J}$, can be derived as indicated in Proposition \ref{lastpros}.
\end{corollary}

\newpage
\section{Proofs} \label{ssec03}
\subsection*{Proof of Proposition \ref{pro4}} 
\begin{proof}{Proof of Proposition \ref{pro4}}
The proof consists of two parts.  In the first part of the proof, we show that for any optimal solution $\rho_0$, the domain of its conjugate function must admit the set ${\cal C}$. Then, in the second part we confirm the optimality of $\rho_{\delta}$ where $\delta_j = \rho_0(\vec{X}_j),\; \forall j \in {\cal J}$. 

{\bf (Part I)} Prove that any optimal solution ${\rho_0}$ admits the following representation
\begin{equation}
{\rho_0}(\vec{Z}) =\sup_{p\in{\cal C}}p^{\top}\vec{Z}-{\rho^{*}_0}(p),\label{eq:convconj}
\end{equation}
where ${\rho^{*}_0}(p):=\sup_{\vec{Z}}p^{\top}\vec{Z}-{\rho_0}(\vec{Z})$.

We start by invoking the theory of conjugate duality (Theorem \ref{rock} (1.)), 
which states that given that ${\rho_0}\in {\cal R}$ we can always represent ${\rho_0}$ by 
\begin{equation}
{\rho_0}(\vec{Z})=\sup_{p} p^{\top}\vec{Z}-{\rho^{*}_{0}}(p). \label{eq:convconj2}
\end{equation}

The rest is to show that the conjugate ${\rho^{*}_0}$ must satisfy $\rho^{*}_0(p)=\infty$ for any $p \in \Re^{|\Omega|} \setminus {\cal C}$. We prove this by contradiction. Suppose that there exists a solution $p_{0} \in  \Re^{|\Omega|} \setminus {\cal C}$ such that ${\rho^{*}_0}(p_{0})<\infty$. The fact that
${\cal C}$ is a closed convex set implies that (by hyperplane separation theorem) there must exist a nonzero vector $\vec{R} \in \Re^{|\Omega|}$ and $b \in \Re$ such that 
\begin{eqnarray}
 && p^\top \vec{R} \leq b,\;\forall p \in {\cal C}, \label{no1} \\
 && p_{0}^\top \vec{R} > b \;\;{\rm (or\; equivalently\; } p_{0}^\top \vec{R} = b + \epsilon\;\;{\rm for\; some\; }\epsilon > 0) \label{no2}
 \end{eqnarray}
hold. Let us consider a random loss $\vec{Z}:=\lambda \vec{R}$ for some $\lambda > 0$.  Based on the dual representation of the reference risk function $\tilde{\rho}$, we have 
\begin{equation}
\tilde{\rho}(\lambda\vec{R}) = \sup_{p \in {\cal C}} \lambda p^\top \vec{R} \leq
\lambda b \;\;\; ({\rm due\;to\;} (\ref{no1})). \label{eq:ineq2}
\end{equation}
Based on (\ref{eq:convconj2}), we also have for the fixed $p_0$
\begin{equation}
{\rho_0}(\lambda\vec{R})\geq p_{0}^{\top}(\lambda\vec{R})-{\rho^{*}_0}(p_{0})=\lambda b+\lambda\epsilon-{\rho^{*}_{0}}(p_{0}) \;\;\; ({\rm due\;to\;} (\ref{no2})).\label{eq:ineq1}
\end{equation}
By subtracting (\ref{eq:ineq2}) from
(\ref{eq:ineq1}), we arrive at
\[ {\rho_0}(\lambda\vec{R})-\tilde{\rho}(\lambda\vec{R})\geq\lambda\epsilon-{\rho^{*}_0}(p_{0})\rightarrow\infty,
\]
as $\lambda\rightarrow\infty.$ This contradicts the fact that ${\rho_0}$ gives the optimal value $u^{*}:=||{\rho_0}-\tilde{\rho}||_{\infty}<\infty$.

{\bf (Part II)} Verify the optimality of $\rho_{\delta}$, where $\delta_j = \rho_0(\vec{X}_j),\; \forall j \in {\cal J}$. 

To proceed, we need to prove first the following inequalities between $\rho_0$ and $\rho_{\delta}$:
\begin{eqnarray}
{\rho_0}(\vec{Z}) && = \sup_{p \in {\cal C}} p^\top \vec{Z} - \rho_0^*(p) \;\;\;\;\;\;\;\;\;\;\;\;\;\;({\rm due\;to\;} (\ref{eq:convconj2})) \nonumber \\
 && = \sup_{p \in {\cal C}} p^\top \vec{Z} - \left\{ \sup_{\vec{Z}} p^\top \vec{Z} - \rho_0(\vec{Z}) \right\} \nonumber \\
 && \leq \sup_{p \in {\cal C}} p^\top \vec{Z} - \left\{ \max_{j \in {\cal J}} \left\{ p^\top \vec{X}_j - \rho_0(\vec{X}_j) \right\} \right\} \nonumber \\
 && = \rho_{\delta}(\vec{Z}), \;\;\; \forall \vec{Z}, \label{eq:maineq}
\end{eqnarray}
and
\begin{eqnarray}
\rho_{\delta}(\vec{X}_{i}) && = \sup_{p \in {\cal C}} p^\top \vec{X}_{i} - \left\{ \max_{j \in {\cal J}} \left\{ p^\top \vec{X}_j - \rho_0(\vec{X}_j) \right\} \right\} \nonumber \\
&& \leq \sup_{p \in {\cal C}} p^\top \vec{X}_{i} - \left\{ p^\top \vec{X}_{i} - \rho_0(\vec{X}_i) \right\} \nonumber \\
 &&  = \rho_0(\vec{X}_i),\;\;\; \forall i \in {\cal J}. \label{eq:maineq2}
\end{eqnarray}

The inequalities (\ref{eq:maineq}) and (\ref{eq:maineq2}) imply, first, that 
$\rho_{\delta}(\vec{0}) = {\rho_0}(\vec{0}) = 0$  (i.e., $\rho_{\delta}$ satisfies the normalization condition).
One can confirm also that based on the representation theory (Theorem \ref{repconv}), $\rho_{\delta}$ must be monotonic, convex, and translation invariant by its construction. Hence, $\rho_{\delta} \in {\cal R}$. The inequalities (\ref{eq:maineq}) and (\ref{eq:maineq2}) imply also that $\rho_{\delta}$ satisfies the constraints (\ref{invv}) and (\ref{inv2}) because they lead to the following:
\[
\mbox{\ensuremath{\rho}}_{\delta}(\vec{L}_{k})\leq{\rho_0}(\vec{L}_{k})\leq{\rho_0}(\vec{U}_{k})\leq\rho_{\delta}(\vec{U}_{k}), \;\;\;\forall k \in {\cal K},
\]
\[
\rho_{\delta}(\vec{W}^{t})\leq{\rho_0}(\vec{W}^{t})\leq{\rho_0}(\vec{W})\leq\rho_{\delta}(\vec{W}), \;\;\;\forall\vec{W}\in{\cal W}^{t},\; \forall t \in {\cal T}.
\]
Finally, we verify that $\rho_{\delta}$ reaches the optimal value of $u^*$  (i.e., $||\rho_{\delta}-\tilde{\rho}|| = u^{*}$). We have 
\begin{eqnarray}
\rho_{\delta}(\vec{Z}) && = \sup_{y\in{\cal C}}y^{\top}\vec{Z}-\max_{j \in {\cal J}}\left\{y^{\top}\vec{X}_j-\rho_0(\vec{X}_j)\right\} \nonumber \\
&&  \leq  \sup_{y\in {\cal C}} y^\top \vec{Z} - \max_{j\in {\cal J}} \left\{ y^\top \vec{X}_j - (\tilde{\rho}(\vec{X}_{j})+u^{*})\right\} \;\;(\text{since }{\rho_0}(\vec{X}_j) \leq \tilde{\rho}(\vec{X}_j) + u^*,\;\forall j \in {\cal J})
\nonumber \\ 
&& \leq  \sup_{y\in {\cal C}} y^\top \vec{Z} - (y^\top \vec{0} - (\tilde{\rho}(\vec{0})+ u^{*})) = \tilde{\rho}(\vec{Z})+u^{*},\; \forall \vec{Z}, \label{inqq11} 
\end{eqnarray}
and also have
\begin{eqnarray}
\rho_{\delta}(\vec{Z}) && = \sup_{y\in{\cal C}}y^{\top}\vec{Z}-\max_{j \in {\cal J}} \left\{y^{\top}\vec{X}_j-\rho_0(\vec{X}_j)\right\} \nonumber \\
&& \geq   \sup_{y\in {\cal C}} y^\top \vec{Z} - \max_{j\in {\cal J}} \left\{ y^\top \vec{X}_j - (\tilde{\rho}(\vec{X}_{j}) -u^{*})\right\} \;\;(\text{since }{\rho_0}(\vec{X}_j) \geq \tilde{\rho}(\vec{X}_j) - u^*,\;\forall j \in {\cal J}) \nonumber \\
&& =  \sup_{y\in {\cal C}} y^\top \vec{Z} - u^*- \max_{j\in {\cal J}}\left\{ y^\top \vec{X}_j - \tilde{\rho}(\vec{X}_{j}) \right\} \;\; \nonumber \\
&&\geq \sup_{y\in {\cal C}} y^\top \vec{Z} -u^{*} = \tilde{\rho}(\vec{Z})-u^{*},\; \forall \vec{Z}, \label{inqq12}
\end{eqnarray}
where the second inequality follows the dual representation of $\tilde{\rho}$; namely 
for any $j \in {\cal J}$,
\begin{equation}
 \tilde{\rho}(\vec{X}_j) = \sup_{y\in {\cal C}} y^\top \vec{X}_j \Leftrightarrow  y^\top \vec{X}_j - \tilde{\rho}(\vec{X}_j) \leq 0,\; \forall y \in {\cal C}. \nonumber
\end{equation}
\Halmos
\end{proof}

\subsection*{Proof of Proposition \ref{pro2}}
\begin{proof}{Proof of Proposition \ref{pro2}}
Because $\rho_{\delta} \in {\cal R}$, we know from the theory of conjugate duality (Theorem \ref{rock} (1.)) that 
\[
\rho_{\delta}(\vec{Z}) = \rho_{\delta}^{**}(\vec{Z}) ,\;\forall \vec{Z}
\]
must hold. Obviously, the condition can be equivalently stated in terms of the following two inequalities: 
\begin{eqnarray}
  && \rho_{\delta}(\vec{Z}) \geq \rho_{\delta}^{**}(\vec{Z}),\; \forall \vec{Z},\nonumber\\
  && \rho_{\delta}(\vec{Z}) \leq \rho_{\delta}^{**}(\vec{Z}), \; \forall \vec{Z}. \nonumber
\end{eqnarray}
The first inequality, however, is always satisfied given that it is implied by the construction of bi-conjugate function (see Theorem \ref{rock}). Therefore, it suffices to proceed with the second inequality.

Consider the above second inequality with $\vec{Z}=\vec{Z}_j$, $j\in {\bar {\cal J}}$. By expanding the biconjugate $\rho_{\delta}^{**}$ based on its definition and using 
the conjugate function  $\rho_{\delta}^{*}$ derived in Lemma \ref{lemma1}, we have the following:
\begin{eqnarray*}
 && \rho_{\delta}^{**}(\vec{Z}_j)\geq\rho_{\delta}(\vec{Z}_j) ,\;\; \forall j\in {\bar {\cal J}}\\
\Leftrightarrow && \sup_{y} \left\{y^{\top}\vec{Z}_j-\rho_{\delta}^{*}(y) \right\}\geq\rho_{\delta}(\vec{Z}_j) ,\;\forall j\in {\bar {\cal J}}\\
\Leftrightarrow && \sup_{y\in \bar{{\cal C}}} \left\{y^{\top}\vec{Z}_j-\max_{i\in {\bar {\cal J}}} \left\{y^{\top}\vec{Z}_{i}-\rho_{\delta}(\vec{Z}_{i}) \right\} \right\} \geq\rho_{\delta}(\vec{Z}_j) ,\; \forall j\in {\bar {\cal J}}\\
\Leftrightarrow && \exists y_j \in \bar{{\cal C}} :\;\;y_j^{\top}\vec{Z}_j-\max_{i\in {\bar {\cal J}}}\left\{y_j^{\top}\vec{Z}_{i}-\rho_{\delta}(\vec{Z}_{i})\right\}\geq\rho_{\delta}(\vec{Z}_j) ,\;\forall j\in {\bar {\cal J}}\\
\Leftrightarrow && \exists y_j\in \bar{{\cal C}}:\;\;y_j^{\top}\vec{Z}_j-y_j^{\top}\vec{Z}_{i}+\rho_{\delta}(\vec{Z}_{i})\geq\rho_{\delta}(\vec{Z}_j),\; \forall i\neq j,
\end{eqnarray*}
where in the fourth line we apply the fact that $\bar{{\cal C}}$ is closed and bounded  (i.e., compact). This completes the first part of the proof.

To prove the other direction, note that for any feasible solution
$\{\delta_{j}^{*}\}_{j\in {\bar {\cal J}}}$ and $\{y_{j}^{*}\}_{j\in {\bar {\cal J}}}$ we can
always construct a $(\{\vec{Z}_j\}_{j\in {\bar {\cal J}}},\bar{{\cal C}})$-supported risk function (\ref{eq:polyreppeat}). It always satisfies $\rho_{\delta}(\vec{Z}_j)\leq\delta_{j}^{*}$ because  
$$\rho_{\delta}(\vec{Z}_j) \leq \sup_{y\in \bar{{\cal C}}}y^{\top}\vec{Z}_j- y^{\top}\vec{Z}_j+\delta_{j}^{*} = \delta_j^*,$$
and satisfies $\rho_{\delta}(\vec{Z}_j)\geq\delta_{j}^{*}$
because
$$\rho_{\delta}(\vec{Z}_j) \geq y_{j}^{*\top}\vec{Z}_j-\max_{i\in {\bar {\cal J}}} \left\{y_{j}^{*\top}\vec{Z}_{i}-\delta_{i}^{*} \right\} \geq  \delta_{j}^{*} \;\; \text{(due to the feasibility of \ensuremath{\{\delta_{j}^{*}\}_{j\in {\bar {\cal J}}}} and \{\ensuremath{y_{j}^{*}\}_{j\in {\bar {\cal J}}}}).} $$
This proves the existence of a risk function $\rho_{\delta} \in {\cal L}(\{\vec{Z}_j\}_{j\in \bar{{\cal J}}}, \bar{{\cal C}})$ that satisfies $\rho_{\delta}(\vec{Z}_j) = \delta_j^*$. \Halmos
\end{proof}

\subsection*{Proof of Proposition \ref{pro3}}
\begin{proof}{Proof of Proposition \ref{pro3}}
We seek a function $\rho_{\delta} \in {\cal L}(\{\vec{X}_j\}_{j\in {\cal J}}, {\cal C})$ that further satisfies the system below:
\begin{eqnarray}
 && x^{t} \in \arg\min_{x\in{\cal X}^{t}}\rho_{\delta}(\vec{Z}^t(x)), \; \forall t \in {\cal T}, \label{optcon}\\
 && \rho_{\delta}(\vec{L}_{k})\leq\rho_{\delta}(\vec{U}_{k}),\;\forall k\in {\cal K} \label{laweli}.
\end{eqnarray}

Due to the monotonicity of $\rho_{\delta} \in {\cal R}$, we can equivalently write the optimization problem in (\ref{optcon}) as 
$$\min_{(x,\vec{W})\in \Pi^{t}} \rho_{\delta}(\vec{W}), \text{ where } \Pi^{t} : = \left\{(x,\vec{W})\; \middle |\;\vec{W} \geq \vec{Z}^t(x),\; x \in {\cal X}^{t} \right\}.$$
Because $\Pi^{t}$ is convex and $\vec{W}^{t} \in \{\vec{X}_j\}_{j \in {\cal J}}$, we can characterize the optimality condition for the problem; namely,  there must exist a subgradient $y\in\partial\rho_{\delta}(\vec{W}^{t})$
such that
\[
y^{\top}(\vec{W}-\vec{W}^{t})\geq0,\;\forall (x,\vec{W}) \in \Pi^{t} \Leftrightarrow 
y^{\top}\vec{W}^{t} \leq\min_{x \in {\cal X}^{t}}y^{\top}\vec{Z}^t(x).
\]
Moreover, applying the theory of conjugate duality (Theorem \ref{rock}) (2.), which states that 
\[
\partial\rho_{\delta}(\vec{W}^{t})=\partial\rho_{\delta}^{**}(\vec{W}^{t})=\arg\max_{y} \left\{y^{\top}\vec{W}^{t}-\rho_{\delta}^{*}(y)\right\},
\]
we can further characterize the set of subgradients $\partial\rho_{\delta}(\vec{W}^{t})$ in terms of inequality constraints, given that 
\begin{eqnarray}
 && y \in \partial\rho_{\delta}^{**}(\vec{W}^{t}) \nonumber \\
\Leftrightarrow \{ y: && y^{\top}\vec{W}^{t}-\rho_{\delta}^{*}(y)\geq\rho_{\delta}(\vec{W}^{t}) \}  \nonumber\\
\Leftrightarrow \{ y: && y^{\top}\vec{W}^{t}-\max_{j\in {\cal J}} \left\{y^{\top}\vec{X}_{j}-\rho_{\delta}(\vec{X}_{j}) \right\}  \geq \rho_{\delta}(\vec{W}^{t}),\;\; y \in {\cal C} \}  \nonumber\\
\Leftrightarrow \{ y: && y^{\top}\vec{W}^{t}-y^{\top}\vec{X}_{j}+\rho_{\delta}(\vec{X}_{j})\geq\rho_{\delta}(\vec{W}^{t}),\; \forall j\in {\cal J},\;\;y \in {\cal C} \}, \label{lawper}
\end{eqnarray}
where in the third line the conjugate $\rho_{\delta}^{*}(y)$ derived in Lemma \ref{lemma1} is applied.

Finally, applying Proposition \ref{pro2}, we can equivalently describe the set of risk functions $\rho_{\delta}$ by the system of constraints (\ref{eq:linearsystema}). Thus by replacing 
$\rho_{\delta}(\vec{X}_{j})$ with $\delta_j$ in (\ref{laweli}) and (\ref{lawper}), we arrive at the final formulation. 
\Halmos
\end{proof}

\subsection*{Proof of Theorem \ref{2ndmain}}
\begin{proof}{Proof of Theorem \ref{2ndmain}}
To complete the proof, we need only to show that given that $\rho_{\delta} \in {\cal L}(\{\vec{X}_j\}_{j\in {\cal J}}, {\cal C})$ the objective function $||\rho_{\delta}-\tilde{\rho}||_{\infty}$ can be reduced to 
\begin{equation} \label{maxx}
\max_{j\in {\cal J}} \left\{|\rho_{\delta}(\vec{X}_{j})-\tilde{\rho}(\vec{X}_{j})| \right\}.
\end{equation}
To confirm this, let $u^* = (\ref{maxx})$. Obviously by the definition of $u^*$, we can also write
$$ -u^* \leq {\rho_{\delta}}(\vec{X}_j) - \tilde{\rho}(\vec{X}_j) \leq  u^*,\;\forall j \in {\cal J}. $$
The direction $||\rho_{\delta}-\tilde{\rho}||_{\infty} \geq u^*$ is clear. To prove the other direction  (i.e., $||\rho_{\delta}-\tilde{\rho}||_{\infty} \leq u^*$), note first that because $\rho_{\delta} \in {\cal R}$, we can apply the theory of conjugate duality (Theorem \ref{rock} (1.)) and use the conjugate $\rho_{\delta}^{*}$ derived in Lemma \ref{lemma1} to write $\rho_{\delta}$ as 
\[\rho_{\delta}(\vec{Z}) = \rho_{\delta}^{**}(\vec{Z})=\sup_{y\in{\cal C}}y^{\top}\vec{Z}-\max_{j \in {\cal J}} \left\{y^{\top}\vec{X}_j-\rho_{\delta}(\vec{X}_j)\right\}.\]
Based on this representation, we can then derive 
\begin{eqnarray}
\rho_{\delta}(\vec{Z}) && = \sup_{y\in{\cal C}}y^{\top}\vec{Z}-\max_{j \in {\cal J}} \left\{y^{\top}\vec{X}_j-\rho_{\delta}(\vec{X}_j)\right\} \nonumber \\
&&  \leq  \sup_{y\in {\cal C}} y^\top \vec{Z} - \max_{j\in {\cal J}} \left\{ y^\top \vec{X}_j - (\tilde{\rho}(\vec{X}_{j})+u^{*})\right\} \;\;(\text{since }{\rho_{\delta}}(\vec{X}_j) \leq \tilde{\rho}(\vec{X}_j) + u^*,\;\forall j \in {\cal J})
\nonumber \\ 
&& \leq  \sup_{y\in {\cal C}} y^\top \vec{Z} - (y^\top \vec{0} - (\tilde{\rho}(\vec{0})+ u^{*})) = \tilde{\rho}(\vec{Z})+u^{*},\; \forall \vec{Z}, \label{inqq11} 
\end{eqnarray}
and also have
\begin{eqnarray}
\rho_{\delta}(\vec{Z}) && = \sup_{y\in{\cal C}}y^{\top}\vec{Z}-\max_{j \in {\cal J}} \left\{y^{\top}\vec{X}_j-\rho_{\delta}(\vec{X}_j)\right\} \nonumber \\
&& \geq   \sup_{y\in {\cal C}} y^\top \vec{Z} - \max_{j\in {\cal J}} \left\{ y^\top \vec{X}_j - (\tilde{\rho}(\vec{X}_{j}) -u^{*})\right\} \;\;(\text{since }{\rho_{\delta}}(\vec{X}_j) \geq \tilde{\rho}(\vec{X}_j) - u^*,\;\forall j \in {\cal J}) \nonumber \\
&& =  \sup_{y\in {\cal C}} y^\top \vec{Z} - u^*- \max_{j\in {\cal J}} \left\{ y^\top \vec{X}_j - \tilde{\rho}(\vec{X}_{j})\right\} \;\; \nonumber \\
&&\geq \sup_{y\in {\cal C}} y^\top \vec{Z} -u^{*} = \tilde{\rho}(\vec{Z})-u^{*},\; \forall \vec{Z}, \label{inqq12}
\end{eqnarray}
where the second inequality follows the supremum representation of $\tilde{\rho}$; namely 
for any $j \in {\cal J}$,
\begin{equation}
 \tilde{\rho}(\vec{X}_j) = \sup_{y\in {\cal C}} y^\top \vec{X}_j \Leftrightarrow  y^\top \vec{X}_j - \tilde{\rho}(\vec{X}_j) \leq 0,\; \forall y \in {\cal C}. \nonumber
\end{equation}

We thus conclude that to determine the parameter $\delta$, it suffices to solve the problem (\ref{final}). Since the problem is a convex program, we can apply the famous result of \cite{M.-Grotschel:1981aa}, which states that for a convex program like (\ref{final}), it can be solved in polynomial time by using the ellipsoid method if and only if for any $z^* := (\delta^*, y_j^*)$ it takes polynomial time to either confirm that $z^*$ is in the feasible set ${\cal Z}$ or generate a hyperplane that separates $z^*$ from ${\cal Z}$. Hence, if the function $h_t(y_{t})$ can be evaluated in polynomial time  (i.e., the forward problem can be solved in polynomial time), and the oracle for the set ${\cal C}$ exists, it can be shown fairly straightforwardly that it also takes polynomial time to confirm $z^* \in {\cal Z}$ or  separate $z^*$ from ${\cal Z}$.  This completes the proof. \Halmos \end{proof}

\subsection*{Proof of Corollary \ref{linprog}}
\begin{proof}{Proof of Corollary \ref{linprog}}
By introducing a dummy variable $s$ that bounds from above $\max_{j\in{\cal J}}\left\{ p^\top \vec{X}_j - \delta_j\right\}$, we have the first formulation. The second formulation with the linear constraints is the dual of the first formulation that can be 
obtained by applying linear duality theory. Note that strong duality always holds here because,  obviously, there always exists a feasible solution to the first formulation (e.g., by setting $s^* =\max_{j\in{\cal J}}\left\{ {p^*}^\top \vec{X}_j - \delta_j\right\}$ for any feasible $p^* \in {\cal C}$).
\Halmos
\end{proof}

\subsubsection*{Proof of Proposition \ref{prop_final}}
As mentioned in Section \ref{permrisk}, in the case of permutation invariance one must deal with the issue that the optimality conditions are defined over non-convex sets $\left\{ \sigma(\vec{W})\;\middle | \sigma \in \Sigma,\; \vec{W} \in {\cal W}^t \right\}$ and the issue that the size of $\left\{ \sigma(\vec{X}_j) \right\}_{\sigma \in \Sigma, j \in {\cal J}}$ grows exponentially. 

We first show how the inverse problem (\ref{uniforminv})--(\ref{inv2}) with ${\cal R}:=\bars{{\cal R}}$ can still be formulated as a convex program by following closely the steps established in Section \ref{gg}. This bypasses the issue of non-convexity. We summarize this first step in the following proposition. After proving this proposition, we then continue to prove how to reduce the size of the problem presented in the proposition from exponentially many to polynomially many.

\begin{proposition} \label{pro9}
Given that Assumption \ref{asperm} holds and that the set of optimal solutions is non-empty, the inverse problem (\ref{uniforminv})--(\ref{inv2}) with ${\cal R}:=\bars{{\cal R}}$ can by solved a risk function $\bars{\rho}_{\delta} \in \bars{{\cal L}}(\{\vec{X}_j\}_{j \in {\cal J}}, \bars{{\cal C}})$, and the parameter $\delta$ is calculated by solving 
\begin{eqnarray}
\min_{u,\delta,y_{\sigma,j}} && u \nonumber\\
 {\rm subject\; to}&& -u\leq\delta_{j}-\tilde{\rho}(\vec{X}_{j})\leq u, \;\;\; \forall j\in {\cal J}, \nonumber \\
 && \delta_{j}+y_{\sigma,j}^{\top}(\sigma'(\vec{X}_{i})-\sigma(\vec{X}_{j}))\leq\delta_{i} , \;\;\; \forall (\sigma,j)\in\Sigma\times {\cal J},\; \forall i\ne j,\;\forall\sigma'\in\Sigma,\nonumber \\
 &&  y_{\sigma,j} \in \bars{{\cal C}} ,  \;\;\; \forall (\sigma,j)\in\Sigma\times {\cal J}, \label{eq:second} \\
 && y_{\sigma*, t}^{\top}\vec{X}_{t}\leq h_t(y_{\sigma^*, t}),  \;\;\; \forall t \in {\cal T}, \nonumber \\
 && \delta_{i}\leq\delta_{j} ,  \;\;\; \forall(i,j)\in{\cal B},\nonumber 
\end{eqnarray}
where $\sigma^*$ in $y_{\sigma*, t}$ corresponds to the permutation such that $\sigma^*(\vec{X}) = \vec{X}$; 
$u \in \Re$, $\delta \in \Re^{|{\cal J}|}$, $ y_{\sigma,j} \in \Re^{M}$; the set
${\cal B}:= \left\{(i,j)\in\{1,2,...,{\cal J}\}^{2}\;\middle |\;(\vec{X}_{i},\vec{X}_{j})\in\{(\vec{L}_{k},\vec{U}_{k})\}_{k\in {\cal K}}\right \}$;
and $h_t$ denotes the function $h_t(y):=\min_{x} \left\{ y^\top \vec{Z}^t(x) \;\middle |\;x\in{\cal X}^{t}\right\}$. \end{proposition}

\begin{proof}{Proof of Proposition \ref{pro9}}
{\bf (Step 1)} Like Proposition \ref{pro4}, we claim that the set $\bars{{\cal L}}(\{\vec{X}_j\}_{j\in {\cal J}}, \bars{{\cal C}})$ contains an optimal solution to the problem. We prove this by showing that if there exists a permutation-invariant risk function $\bars{\rho}_0$ that is optimal for the  problem with the optimal value $u^* < \infty$, there must exist a risk function $\bars{\rho}_{\delta} \in \bars{{\cal L}}(\{\vec{X}_j\}_{j\in {\cal J}}, \bars{{\cal C}})$ that is also optimal to the problem, namely by setting 
\[ \delta_j =  \bars{\rho}_0(\vec{X}_j). \]

This can be proved by following exactly the steps in Proposition \ref{pro4}. Namely, by letting  
$\{\vec{X}_j\}_{j\in {\cal J}}$ in ${\cal L}(\{\vec{X}_j\}_{j\in {\cal J}}, {\cal C})$ of Proposition \ref{pro4} now represent all random losses in $\{\sigma(\vec{X}_j)\}_{\sigma\in \Sigma, j\in {\cal J}}$ and letting ${\cal C}$ now take the form of $\bars{{\cal C}}$, we can conclude based on the result in Proposition \ref{pro4} that the following function $\rho_{\delta} \in {\cal L}(\{\sigma(\vec{X}_j)\}_{\sigma \in \Sigma j\in {\cal J}}, \bars{{\cal C}})$ must also be optimal: 
\[\rho_{\delta}(\vec{Z}) :=  \sup_{p \in \bars{{\cal C}}}  p^{\top}\vec{Z} - \max_{\sigma\in \Sigma, j\in {\cal J}} \left\{p^{\top}\sigma(\vec{X}_{j})- \bars{\rho}_0 (\sigma(\vec{X}_j)) \right\}. \]
Note that due to the fact that $\bars{\rho}_0$ is permutation invariant, the above function can be simplified to 
\[\rho_{\delta}(\vec{Z}) =  \sup_{p \in \bars{{\cal C}}}  p^{\top}\vec{Z} - \max_{\sigma\in \Sigma, j\in {\cal J}} \left\{p^{\top}\sigma(\vec{X}_{j})- \bars{\rho}_0 (\vec{X}_j) \right\} = \bars{\rho}_{\delta}(\vec{Z}). \]
This completes the first step of the proof. \\
\\
{\bf (Step 2)} Following Proposition \ref{pro2}, we can boil down the problem of search over the set $\bars{{\cal L}}(\{\vec{X}_j\}_{j\in {\cal J}}, \bars{{\cal C}})$ into identifying feasible solutions $y_{\sigma,j}$ and $\delta_j$ in the following systems:
\begin{eqnarray}
 && \delta_{j}+y_{\sigma,j}^{\top}(\sigma'(\vec{X}_{i})-\sigma(\vec{X}_{j}))\leq\delta_{i},\;  \forall i, j \in {\cal J}, \; \forall \sigma, \sigma' \in \Sigma, \label{eq:linearsystem2}\\
  && y_{\sigma,j} \in \bars{{\cal C}},\; \forall j\in {\cal J},\; \forall \sigma \in \Sigma. \nonumber 
\end{eqnarray} 
\\
{\bf (Step 3)} We will follow closely the steps in Proposition \ref{pro3}. However, one can see that because we seek only a function $\bars{\rho}_{\delta} \in \bars{{\cal L}}(\{\vec{X}_j\}_{j\in {\cal J}}, \bars{{\cal C}})$, which is permutation invariant, we can reduce the constraints 
\begin{eqnarray}
 && x^{t} \in \arg\min_{x\in{\cal X}^{t}}\bars{\rho}_{\delta}(\sigma(\vec{Z}^t(x))),\; \forall t \in {\cal T}, \; \forall \sigma \in \Sigma, \label{optcon11}\\
 && \bars{\rho}_{\delta}(\sigma'(\vec{L}_{k}))\leq\bars{\rho}_{\delta}(\sigma(\vec{U}_{k})),\; \forall k\in {\cal K},\; \forall \sigma' , \sigma \in \Sigma \label{laweli11}
\end{eqnarray}
into 
\begin{eqnarray}
 && x^{t} \in \arg\min_{x\in{\cal X}^{t}}\bars{\rho}_{\delta}(\vec{Z}^t(x)),\; \forall  t \in {\cal T},  \label{optcon22}\\
 && \bars{\rho}_{\delta}(\vec{L}_{k})\leq\bars{\rho}_{\delta}(\vec{U}_{k}),\; \forall k\in {\cal K}. \label{laweli22}
\end{eqnarray}

We will follow the same arguments used in Proposition \ref{pro3} to further reduce the constraint (\ref{optcon22}). Note that although it looks quite repetitive, one should be careful to derive it based on structure of $\bars{\rho}_{\delta}$. First, the monotonicity of $\bars{\rho}_{\delta}$ allows us to equivalently write the constraint (\ref{optcon22}) as 

$$\min_{(x,\vec{W})\in \Pi^{t}} \bars{\rho}_{\delta}(\vec{W}), \text{ where } \Pi^{t} : = \left\{(x,\vec{W})\;\middle |\;\vec{W} \geq \vec{Z}^t(x),\; x \in {\cal X}^{t}\right\}.$$

Because $\Pi^{t}$ here is convex and $\vec{W}^{t} \in \{\vec{X}_j\}_{j \in {\cal J}}$, we can now write down the optimality condition that there must exist a subgradient $y\in\partial\bars{\rho}_{\delta}(\vec{W}^{t})$ such that 
\[
y^{\top}(\vec{W}-\vec{W}^{t})\geq0,\;\forall (x,\vec{W}) \in \Pi^{t} \Leftrightarrow 
y^{\top}\vec{W}^{t} \leq\min_{x \in {\cal X}^{t}}y^{\top}\vec{Z}^t(x).
\]

Because $\bars{\rho}_{\delta}$ is obviously convex, we again apply the theory of conjugate duality (Theorem \ref{rock}) (2.), which states that 
\[
\partial\bars{\rho}_{\delta}(\vec{W}^{t})=\partial\bars{\rho}_{\delta}^{**}(\vec{W}^{t})=\arg\max_{y} \left\{y^{\top}\vec{W}^{t}-\bars{\rho}_{\delta}^{*}(y)\right\}.
\]
We now further characterize the set of subgradients $\partial\bars{\rho}_{\delta}(\vec{W}^{t})$ by
\begin{eqnarray}
 && y \in \partial\bars{\rho}_{\delta}^{**}(\vec{W}^{t}) \nonumber \\
\Leftrightarrow\{y: && y^{\top}\vec{W}^{t}-\bars{\rho}_{\delta}^{*}(y)\geq\bars{\rho}_{\delta}(\vec{W}^{t})\}  \nonumber\\
\Leftrightarrow\{y: && y^{\top}\vec{W}^{t}-\max_{\sigma \in \Sigma, j\in {\cal J}} \left\{y^{\top}\sigma(\vec{X}_{j})-\bars{\rho}_{\delta}(\sigma(\vec{X}_{j}))\right\}\geq \bars{\rho}_{\delta}(\vec{W}^{t}),\; y \in \bars{{\cal C}}\}  \nonumber\\
\Leftrightarrow\{y: && y^{\top}\vec{W}^{t}-y^{\top}\sigma(\vec{X}_{j})+\bars{\rho}_{\delta}(\sigma(\vec{X}_{j}))\geq\bars{\rho}_{\delta}(\vec{W}^{t}),\; \forall j\in {\cal J},\; \forall \sigma \in \Sigma, \; \forall y \in \bars{{\cal C}}\}, \label{lawper2}
\end{eqnarray}
where in the third line the conjugate $\bars{\rho}_{\delta}^{*}(y)$ derived in Lemma \ref{lemma1} is applied.

{\bf (Step 4)} Now, following the proof of Theorem \ref{2ndmain}, it is straightforward to see that the objective function in this case can be reduced to
\begin{equation} \label{maxxx}
\max_{\sigma \in \Sigma, j\in {\cal J}} \left\{|\bars{\rho}_{\delta}(\sigma(\vec{X}_{j}))-\tilde{\rho}(\vec{X}_{j})| \right\}.
\end{equation}

Moreover, in applying Step 2 we can equivalently describe the set of risk functions $\bars{\rho}_{\delta}$ by the system of constraints (\ref{eq:linearsystem2}). Thus by replacing 
$\bars{\rho}_{\delta}(\sigma(\vec{X}_{j}))$ with $\delta_j$ in (\ref{laweli22}), (\ref{lawper2}) and (\ref{maxxx}), we arrive at the final formulation. \Halmos
\end{proof}

\vspace{10pt}
{\bf (Continue the proof of Proposition \ref{prop_final})} Now, we show how to reduce the optimization problems presented Proposition \ref{pro9}
(including the problem in the definition of $\bars{{\cal L}}(\{\vec{X}_j\}_{j \in {\cal J}}, \bars{{\cal C}})$). We consider first the reduction of the problem (\ref{eq:second}). Note first that the constraints associated with
$y_{\sigma*, t}$ can be equivalently written as, with $y_{\sigma*, t}$ replaced by $y_t$,
\begin{eqnarray}
  && y_{t}^{\top}\vec{X}_{t}\leq h_t(y_{t}),\; \forall t \in {\cal T},  \nonumber \\
  && \delta_{t}+y_{t}^{\top}(\sigma(\vec{X}_{i})-\vec{X}_{t})\leq\delta_{i}, \; \forall t \in {\cal T},\; \forall i\in {\cal J}\setminus\{t\}, \; \forall\sigma\in\Sigma,  \label{dd1}\\
  && y_t\in  \bars{{\cal C}}, \; \forall t \in {\cal T}. \nonumber 
\end{eqnarray}

We show that the other constraints, namely the second and the third constraint in (\ref{eq:second}) in general can also be reduced to
\begin{eqnarray}
 && \delta_{j}+y_{j}^{\top}(\sigma'(\vec{X}_{i})-\vec{X}_{j})\leq\delta_{i},\;  \forall j\in {\cal J},\;\forall i\ne j,\;\forall\sigma'\in\Sigma,\label{eq:reduction1}\\
 && y_j \in \bars{{\cal C}},\;   \forall j\in {\cal J}. \nonumber 
\end{eqnarray}
We prove this by showing that given any feasible solution $(u^*, \delta^{*},y_{\sigma,j})$
of the problem (\ref{eq:second}) we can always construct a
feasible solution $(u^*, \delta^{*},\bar{y}_{\sigma'',j})$ with
$\bar{y}_{\sigma'',j}$ satisfying 
\[
\bar{y}_{\sigma'',j}:=\frac{1}{|\Sigma|}\sigma''(\sum_{\sigma\in\Sigma}\sigma{}^{-1}(y_{\sigma,j})),\;\forall\sigma''\in\Sigma,
\]
which gives the same optimal value $u^*$. 

To verify its feasibility for the second constraint of (\ref{eq:second}), by substitution we have 
$$
 \bar{y}_{\sigma'',j}^{\top}(\sigma'(\vec{X}_{i})-\sigma''(\vec{X}_{j}))
= \frac{1}{|\Sigma|}(\sum_{\sigma\in\Sigma}y_{\sigma,j}^{\top}(\sigma(\sigma''^{-1}(\sigma'(\vec{X}_{i})))-\vec{X}_{j})
\leq \frac{1}{|\Sigma|}(\sum_{\sigma\in\Sigma}(\delta_{i}-\delta_{j})) =\delta_{i}-\delta_{j},
$$
where the last inequality is due to the feasibility of $y_{\sigma,j}$. For the third constraint of (\ref{eq:second}), feasibility can be verified as follows. Since $y_{\sigma,j}\in \bars{{\cal C}}$, we have $\sigma^{-1}(y_{\sigma,j}) \in \bars{{\cal C}}$. We also have $\sum_{\sigma \in \Sigma} \frac{1}{|\Sigma|} \sigma^{-1}(y_{\sigma,j}) \in \bars{{\cal C}}$ because the summation is a convex combination and the set $\bars{{\cal C}}$ is convex. Given this, we also have $\bar{y}_{\sigma'',j} \in \bars{{\cal C}}$ by the definition of $\bars{{\cal C}}$. Hence, we can replace $y_{\sigma,j}$ by $\sigma(y_{j})$ for some
$y_{j}\in\Re^{M}$ in the second and third constraints in (\ref{eq:second})
and arrive at the reduction (\ref{eq:reduction1}).

Now, both the constraint (\ref{dd1}) and (\ref{eq:reduction1}) can be re-arranged into the following general
form 
\begin{equation}
y_{j}^{\top} \sigma(\vec{X}_{i})\leq\delta_{i}-\delta_{j}+y_{j}^{\top}\vec{X}_{j},\; \forall\sigma\in\Sigma,\; \forall  j \in {\cal J}. \label{star}
\end{equation}

We show in general how the constraint in the form of $y_{j}^{\top}\sigma(\vec{X})\leq b,\;\forall\sigma\in\Sigma$ for some $\vec{X}$ and $b$
can be reduced, which can then be applied to reduce (\ref{dd1}) and (\ref{eq:reduction1}). Recall first that a permutation matrix $Q_{\sigma}$ is a matrix that satisfies 
$\sigma(\vec{X}) = Q_\sigma \vec{X}$ and $(Q_{\sigma})_{m,n} \in \{0,1\}$ and $Q_{\sigma}^{\top}\vec{1}=\vec{1},\;Q_{\sigma}\vec{1}=\vec{1}$. Hence, $y_{j}^{\top}\sigma(\vec{X})\leq b,\;\forall\sigma\in\Sigma$ can be re-written as $\max_{\sigma\in\Sigma}y_{j}^{\top}Q_{\sigma}\vec{X}\leq b$ and also as 
$$\max_{ Q \in \text{Conv}(\{Q_{\sigma}\}_{\sigma\in \Sigma}) } y_{j}^{\top}Q \vec{X}\leq b.$$
Applying the result of \cite{birkhoff:tola}, we can reformulate the convex hull of all permutation matrices into linear constraints and arrive at the following formulation:
\[
\max_{Q}\; \left\{y_{j}^{\top}Q\vec{X}\; \middle |\;Q^{\top}\vec{1}=\vec{1},\;Q\vec{1}=\vec{1},\;Q \in \Re_+^{M\times M} \right\}\leq b.
\]

By deriving the dual problem of the above linear program, we have 
\begin{eqnarray*}
 & \min_{v,w} \left\{\vec{1}^{\top}v+\vec{1}^{\top}w\; \middle |\;\vec{X}y^{\top}-v\vec{1}^{\top}-\vec{1}w^{\top} \leq 0 \right\} &\leq b\\
\Leftrightarrow & \exists v,w\;:\;\vec{1}^{\top}v+\vec{1}^{\top}w\leq b,\;\vec{X}y^{\top}-v\vec{1}^{\top}-\vec{1}w^{\top}\leq0.
\end{eqnarray*}
Strong duality holds for the above linear programs because there always exists a permutation matrix satisfying the above constraints. We apply this dualization procedure to (\ref{star}), which leads to the final formulation of the problem (\ref{mod2}). 

Finally, we consider the reduction of the problem in the definition of $\bars{{\cal L}}(\{\vec{X}_j\}_{j \in {\cal J}}, \bars{{\cal C}})$. The optimization problem can be equivalently formulated as 
\begin{eqnarray}
     \sup_{p \in \bars{{\cal C}},t}  && p^\top \vec{Z} - t  \nonumber \\
     {\rm subject \ to} && p^\top \sigma(\vec{X}_j)  \leq t +  \delta_j ,\;\;\;\; \forall \sigma \in \Sigma, \; \forall j \in {\cal J}. \label{last2}
\end{eqnarray}
We can apply the same dualization procedure above to reduce again the constraint (\ref{last2}), which leads to the final formulation. \Halmos

\subsection*{Proof of Proposition \ref{lastpros}}
\begin{proof}{Proof of Proposition \ref{lastpros}}
Given that Assumption \ref{rational} holds, we can always convert the probability values specified in the distribution of $Z(x,\xi^t)$ (that satisfies Assumption \ref{ass_obj}) and the set of distributions $\{F_j\}_{j\in {\cal J}}$ to ratios in the form of $n/M$ for some fixed $M \in \mathbb{Z}^+$ and $n \in \{1,...,M\}$. By considering an outcome space with $M$ uniformly distributed outcomes, we can equivalently define the random loss $\vec{Z}(x)$ as a mapping from $\Omega:=\{\omega_i\}_{i=1}^M$ to $\Re$ that satisfies $Z(x,\xi(\omega_i)) \in \{Z(x,\xi_o)\}_{o=1}^{\tau_0}$ and $| \left\{\omega_i \;|\; Z(x,\xi(\omega_i))=Z(x,\xi_o) \right\} |=\bar{p}^{\xi}_o M$, and similarly $\vec{X}_j$ as a mapping $X_j : \Omega \rightarrow \Re$ that satisfies 
$X_j(\omega_i) \in \{(\vec{S}_j)_{o}\}_{o=1}^{\tau_j}$ and $|\{\omega_i\;|\; X_j(\omega_i)=(\vec{S}_j)_{o} \}|=\bar{p}^{j}_o M$. 

Suppose now that the optimization problems  (\ref{mod2}) and (\ref{mod111}) are formulated based on the above definition of random losses. We show in what follows how the problems can be further reduced. Note first that by replacing the objective function $p^\top \vec {Z} - t$ in (\ref{mod111}) with a new variable $s$, we can reformulate its first constraint into
$$ \vec{1}^{\top}v_j+\vec{1}^{\top}w_j - p^\top \vec {Z} \leq \delta_j -s ,\; \forall j\in {\cal J}. $$
We can then write all the constraints on the variable $p$ in (\ref{mod111}) by 
\begin{equation}
p  \in {\cal G}(\vec{Z},s, \{\delta_i\}_{i \in {\cal J}}) \label{mm3},
\end{equation}
where 
${\cal G}(\vec{Y}, t, \{\delta_i\}_{i\in {\cal J}})$ is a parameterized set represented by the following system of constraints on $y$: $\exists v_i, w_i$ such that 
\begin{eqnarray}
&& \vec{1}^{\top}v_i+\vec{1}^{\top}w_i - y^\top \vec {Y} \leq \delta_i - t, \; \forall i \in {\cal J} ,\label{ver1} \\
&& \vec{X_i}y^{\top}-v_i\vec{1}^{\top}-\vec{1}w_i^{\top}\leq 0, \; \forall i \in {\cal J}, \label{ver2} \\
&& y \in \bars{{\cal C}}. \label{ver3} 
\end{eqnarray}

Given any fixed $\{\delta_j^*\}_{j\in {\cal J}}$, the constraints in the optimization problem (\ref{mod2}) can also be equivalently written as 
\begin{eqnarray}
  && y_t \in  {\cal G}(\vec{X}_t, \delta_t^*, \{\delta_i^*\}_{i\in {\cal J}})  \cap  \left\{y\; \middle |\; y^\top \vec{X}_t \leq h_t(y) \right\}, \; \forall t \in {\cal T}, \label{mm} \\
  && y_j  \in {\cal G}(\vec{X}_j,\delta_j^*, \{\delta_i^*\}_{i\in {\cal J}}),   \; \forall j \in {\cal J} \setminus {\cal T}. \label{mm2} 
\end{eqnarray}

We present only the reduction of the constraints (\ref{mm}) with (\ref{ver1})--(\ref{ver3}), given that  the same steps can be applied to reduce the constraints (\ref{mm2}) (with (\ref{ver1})--(\ref{ver3})) and (\ref{mm3}) (with (\ref{ver1})--(\ref{ver3})). 

Because it suffices to consider (\ref{mm}) for any fixed $t$, from here on we consider only $t=1$ and drop the index $t$ for the variables to simplify the presentation.
Given a fixed set of $\{\delta_j^*\}_{j\in {\cal J}}$, let $y^*$, $v_i^*$, $w_i^*$ denote a feasible solution that satisfies (\ref{mm}) and (\ref{ver1})--(\ref{ver3}). For $o=1,...,\tau_0$, let ${\cal I}^{(1)}_o$ denote the set of indices $n$ of $\vec{X_1}$ such that $(\vec{X_1})_n = (\vec{S}_1)_o$, and therefore $|{\cal I}^{(1)}_o| = \bar{p}^{(1)}_oM$. 
We claim that the solution $v_i^*$ together with the following $y^{**} \in \Re^M$, $w_i^{**}\in \Re^M$ that satisfies for any $n \in {\cal I}^{(1)}_o$,
$$ (y^{**})_n = \frac{1}{|{\cal I}^{(1)}_o |} \sum_{a \in {\cal I}^{(1)}_o }  (y^*)_a, \;\;\;\; 
(w^{**}_i)_n = \frac{1}{|{\cal I}^{(1)}_o |} \sum_{a \in {\cal I}^{(1)}_o }  (w^*_i)_a, $$
$o=1,...,\tau_0$, will also satisfy (\ref{mm}) and (\ref{ver1})--(\ref{ver3}).

The constraint (\ref{mm}) and (\ref{ver1})--(\ref{ver2}) can be verified fairly straightforwardly by the direct substitution. To verify the third constraint (\ref{ver3}), we construct a sequence of solutions $y_*^{(1)},...,y_*^{(\tau_0)}$ that satisfies $y_*^{(\tau_0)} = y^{**}$ and that $y_*^{(\tau_0)} \in \bars{{\cal C}}$. For $o=1,...,\tau_0$, let $\Sigma_o$ denote the set of all permutation operators $\sigma$ that satisfy $(\sigma(y^*))_a = (y^*)_a,$ $\forall a \notin {\cal I}^{(1)}_o$. In other words, the set consists of all permutations that permute only the entries $a \in {\cal I}^{(1)}_o$. Set $o=1$ and we construct $y_*^{(1)}$ by $y_*^{(1)} := \sum_{\sigma \in \Sigma_1} \frac{1}{|{\cal I}^{(1)}_o|!} \sigma(y^*)$. One can confirm that $y_*^{(1)}$ satisfies
$$ (y_{*}^{(1)})_{\tilde{n}} = \frac{1}{|{\cal I}^{(1)}_1 |} \sum_{a \in {\cal I}^{(1)}_1 }  (y^*)_a$$
for ${\tilde{n}} \in {\cal I}^{(1)}_1$ and $(y_{*}^{(1)})_{\tilde{n}} = (y^*)_{\tilde{n}}$ otherwise. Given that $y^* \in \bars{{\cal C}}$, we must have $\sigma(y^*) \in \bars{{\cal C}}$ by the definition of $\bars{{\cal C}}$ and therefore $y_*^{(1)} \in \bars{{\cal C}}$ because the summation is a convex combination. For $o \geq 2$, we can construct $y_*^{(o)} = \sum_{\sigma \in \Sigma_{o}}  \frac{1}{|{\cal I}^{(1)}_o|!}\sigma(y_*^{(o-1)})$. If $y_*^{(o-1)} \in \bars{{\cal C}}$, $y_*^{(o)} \in \bars{{\cal C}}$ must hold and $y_*^{(o)}$ satisfies that for any ${\tilde{n}} \in {\cal I}_{o'}^{(1)}$, $o'=1,...,o$, 
$$ (y_{*}^{(o)})_{\tilde{n}} = \frac{1}{|{\cal I}^{(1)}_{o'} |} \sum_{a \in {\cal I}^{(1)}_{o'} }  (y^*)_a,$$
and $(y_{*}^{(o)})_{\tilde{n}} = (y^*)_{\tilde{n}}$ otherwise. By induction, $y_*^{(\tau_0)} \in \bars{{\cal C}}$ and $y_{*}^{(\tau_0)} = y^{**}$.

Hence, we can reduce the constraints (\ref{mm}) and (\ref{ver1})--(\ref{ver3}) by imposing for some $\tilde{y} \in \Re^{\tau_0}$, $\tilde{w} \in \Re^{\tau_0}$
that for $a \in {\cal I}^{(1)}_o$, $o=1,...,\tau_0$, $(y)_a = (\tilde{y})_o$ and $(w_i)_a = (\tilde{w}_i)_o$. This leads to the following constraints
\begin{eqnarray}
        &&\vec{1}^\top v_i + 1^\top (\lambda_1 \circ \tilde{w}_i) - (\lambda_1\circ \tilde{y})^\top \vec{S}_1 \leq \delta_i - \delta_1, \label{fir}\\
        && \vec{X}_i \tilde{y}^\top - v_i \vec{1}^\top - \vec{1}\tilde{w}_i^\top \leq 0, \label{ind}\label{finn}\\
        && {\cal H}_{F_1}(\tilde{y}) \in \bars{{\cal C}}, \label{finn2}\\
        && (\lambda_1\circ \tilde{y})^\top \vec{S}_1 \leq \min_{x\in {\cal X}} (\lambda_1\circ \tilde{y})^\top \vec{Z}'(x), \label{finn3}
\end{eqnarray}
where $\lambda_1:=(|{\cal I}^{(1)}_1|,...,|{\cal I}^{(1)}_{\tau_0}|)$ and $\vec{Z}'(x):=(Z(x,\xi_1),...,Z(x,\xi_{\tau_0}))^\top$.

We now show that the above four constraints can be further reduced. Let ${\cal I}_o^{(i)}$ denote the set of indices $n$ of $\vec{X}_i$ such that $(\vec{X}_i)_n = (\vec{S}_i)_o$,  $o=1,...,\tau_i$,  and therefore $|{\cal I}_o^{(i)}| = \bar{p}_o^i M$. 

It is not difficult to see that for any $(v_i)_a$ such that $a\in {\cal I}_o^{(i)}$ the constraints associated with $(v_i)_a$ are identical in (\ref{ind}). Because reducing $(v_i)_a$, for any $a$, is always feasible for (\ref{fir}), if there exists any $(v^*_i)_a \neq (v^*_i)_b$ for $a$, $b \in {\cal I}_o^{(i)}$, we can always make them equal by reducing the larger one (without violating any constraint). We can thus conclude that we can always impose for some $\tilde{v}_i \in \Re^{\tau_i}$ that $(v_i)_a = (\tilde{v}_i)_o$ for any $a \in {\cal I}_o^{(i)}$. This leads to the reformulation of the first constraint (\ref{fir}) into
$$ \vec{1}^\top (\lambda_i \circ \tilde{v}_i) + 1^\top (\lambda_1 \circ \tilde{w}_i) - (\lambda_1 \circ \tilde{y})^\top \vec{S}_1 \leq \delta_i - \delta_1, $$
where $\lambda_i:=(|{\cal I}_1^{(i)}|,...,|{\cal I}_{\tau_i}^{(i)}|)$ and (\ref{finn}) into 
$$\vec{S}_i \tilde{y}^\top - \tilde{v}_i \vec{1}^\top - \vec{1}\tilde{w}_i^\top \leq 0. $$

Letting $(\lambda_i \circ \tilde{v}_i) = \hat{v}_i$, $(\lambda_1 \circ \tilde{w}_i) = \hat{w}_i$, and $(\lambda_1\circ \tilde{y}) = \hat{y}$, we have (\ref{finn}) become
\begin{equation}
\vec{S}_i ((\lambda_1)^{-1}\circ \hat{y})^\top - ((\lambda_i)^{-1}\circ \hat{v}_i) \vec{1}^\top - \vec{1}((\lambda_1)^{-1} \circ \hat{w}_i)^\top \leq 0, \label{last}
\end{equation}
and  (\ref{finn2}) become
$$ {\cal H}_{F_1}((\lambda_1)^{-1} \circ \hat{y})\in \bars{{\cal C}},$$
and (\ref{fir}), (\ref{finn3}) reduce respectively to (\ref{refff}) (with $j=1$) and (\ref{refff2}) (with $t=1$). 

Finally, multiplying (\ref{last}) with $(\vec{1} \lambda_1^\top)$, we have
$$ (\vec{1} \lambda_1^\top) \circ (\vec{S}_i ((\lambda_1)^{-1}\circ \hat{y})^\top - ((\lambda_i)^{-1}\circ \hat{v}_i) \vec{1}^\top - \vec{1}((\lambda_1)^{-1} \circ \hat{w}_i)^\top) \leq 0, $$ which leads to the final formulation for (\ref{refff3}) (with $j=1$).
\halmos
\end{proof}

\subsection*{Proof of Proposition \ref{prodec}}
\begin{proof}{Proof of Proposition \ref{prodec}}
Following Proposition \ref{pro3}, we can equivalently formulate the problem
as
\begin{eqnarray*}
\min_{x',\delta_{Z},y_{0},y_{Z}} && ||x'-x^{T}||\\
{\rm subject\;to\;} && y_{0}^{\top}Zx'\leq\delta_{Z},\\
 && \delta_{Z}-y_{Z}^{\top}Zx'\leq0,\\
 && y_{Z}^{\top}Zx'\leq\min_{x} \left\{y_{Z}^{\top}Zx\;|\;Ax\geq b \right\},\\
 && y_{0},\; y_{Z}\in {\cal C}.
\end{eqnarray*}
Observe first that given any solution $x'$,  $y_{Z}$, one can always set
$y_{0}=y_{Z}$ and $\delta_{Z}=y_{Z}^{\top}Zx'$ to satisfy the first
two constraints. The third constraint, by definition, is equivalent
to the constraint (\ref{invlin}). Hence, the above problem is indeed equivalent
to the first optimization problem in Proposition \ref{prodec}.

We can equivalently state the constraint (\ref{invlin}) in terms of the KKT
condition for linear programs, which is
\begin{align}
 & Ax'-b\geq0, \label{eq:1-1}\\
 & A^{\top}u=Z^{\top}y, \nonumber \\
 & u\geq0, \label{eq:12}\\
 & (Ax'-b)^{\top}u=0.\label{eq:13}
\end{align}

It is known that constraints (\ref{eq:1-1}), (\ref{eq:12}), and
(\ref{eq:13}) can be equivalently stated as the linear complementarity
constraints $(Ax'-b)_{i}\cdot u_{i}=0,\;\forall i$ (\cite{Z-Q:1996aa}).
By introducing binary variables $\eta_{i}\in\{0,1\}$, we can apply
the Big-M method to equivalently formulate these constraints as the
constraints \eqref{dec0}--\eqref{dec4} provided that the constant $M$ is sufficiently
large.

Finally, because $\delta_{Z}=y_{Z}^{\top}Zx'$, we can obtain the risk
function $\rho_{\delta}$ by setting $\vec{X}_{1}:=Zx^{*}$, $\vec{X}_{2}=\vec{0}$, 
and $\delta_1:= \delta_{Z}^*=y_{Z}^{*\top}Zx^{*}, \; \delta_{2}=0$ with the optimal
solution $x^{*}$ and $y_{Z}^{*}$. \Halmos
\end{proof}

\subsection*{Proof of Proposition \ref{perminv}}
\begin{proof}{Proof of Proposition \ref{perminv}}
Following the proof in Proposition \ref{prop_final} (in particular, \eqref{eq:reduction1}), we can equivalently formulate the problem as
\begin{eqnarray}
\min_{x',\delta_{Z},y_{0},y_{Z}} && ||x'-x^{T}||\nonumber \\
{\rm subject\;to\;} && y_{0}^{\top}\sigma(Zx')\leq\delta_{Z}, \;\;\; \forall\sigma\in\Sigma,  \nonumber \\
 && \delta_{Z}-y_{Z}^{\top}Zx'\leq0, \nonumber \\
 && y_{Z}^{\top}(\sigma(Zx')-Zx')\leq0, \;\;\; \forall\sigma\in\Sigma, \label{eq:perpe}\\
 && y_{Z}^{\top}Zx'\leq\min_{x}\left\{y_{Z}^{\top}Zx\;|\;Ax\geq b\right\}, \nonumber \\
 && y_{0,}\; y_{Z}\in \bars{{\cal C}}.\nonumber 
\end{eqnarray}

To show how one may reduce the problem, let us start by focusing on
the constraint (\ref{eq:perpe}):
\[
y_{Z}^{\top}\sigma(Zx')\leq y_{Z}^{\top}Zx',\;\forall\sigma\in\Sigma.
\]
By taking a closer look at this inequality, we can recognize that
it is an instance of the rearrangement inequality, which states that
$y_{Z}$ is feasible to the constraints if and only if its ordering
matches the ordering of $Zx'$:
\[
(Zx')_{i}\leq(Zx')_{j}\Leftrightarrow(y_{Z})_{i}\leq(y_{Z})_{j},\;\forall i\neq j.
\]
We can thus apply the Big-M method to reformulate the above condition
as the constraints (\ref{invperm1}), (\ref{invperm2}), (\ref{invperm}), and (\ref{invperm4}) with sufficiently large M. 

Moreover, for any $y_{Z} \in \bars{{\cal C}}$ satisfying the above matching condition, we can observe that 
\[
y_{Z}^{\top}Zx' \geq (\frac{1}{|\Omega|}{\bf 1})^{\top}Zx'
\]
must hold. To see this, one may assume without loss of generality that 
$(Zx')_1 \leq \cdots \leq (Zx')_{|\Omega|}$ and to minimize $y_{Z}^{\top}Zx'$
over $y_Z \in \bars{{\cal C}}$ that satisfies $(y_Z)_1 \leq \cdots \leq (y_Z)_{|\Omega|}$, one can confirm that $y_Z = \frac{1}{|\Omega|}{\bf 1}$ gives the minimum.
This implies that given any $y_{Z}$ satisfying (\ref{eq:perpe}), 
there always exists a feasible $y_{0}$ and $\delta_{Z}$ for the
first two constraints, namely by setting $y_{0}=\frac{1}{|\Omega|}{\bf 1}$
and $\delta_{Z}=y_{Z}^{\top}Zx'$. Hence, we
can remove the first two constraints and arrive at the final formulation.

Finally, we can obtain the risk function $\bars{\rho}_{\delta}$ by setting $\vec{X}_{1}:=Zx^{*}$, 
$\vec{X}_{2}=\vec{0}$, and $\delta_1:=\delta_{Z}^*=y_{Z}^{* \top}Zx^{*}$ and $\delta_{2}=0$ with the optimal solution $x^{*}$ and  $y_{Z}^{*}$. \Halmos
\end{proof}

\subsection*{Proof of Proposition \ref{perminv2}}
\begin{proof}{Proof of Proposition \ref{perminv2}}
Like the proof of Proposition \ref{lastpros}, we first construct random variables
in a outcome space $\Omega$ endowed with a uniform probability measure.
We can then apply Proposition \ref{perminv}, and thereafter we show how the problem
can be further reduced.

Given the finite supports $\{\xi_{o}^{\top}x\}_{o=1}^{\tau_{0}}$
and the associated probability values $\{\bar{p}_{o}^{\xi}\}_{o=1}^{\tau_{0}}$,
we define random loss $Zx$ as a mapping from $\Omega$ to $\mathbb{R}$
that satisfies $(Zx)_i \in\{\xi_{o}^{\top}x\}_{o=1}^{\tau_{0}}$
and $| \left\{i\;|\;(Zx)_i =\xi_{o}^{\top}x \right\}|=\bar{p}_{o}^{\xi}M$.
Let ${\cal I}_{o}$ denote the set of indices $i$ of $(Zx)_i$ such
that $(Zx)_i=\xi_{o}^{\top}x$ and therefore $|{\cal I}_{o}|=\bar{p}_{o}^{\xi}M$.
Using this definition of $Zx$, we can apply Proposition \ref{perminv} to formulate
the MIP model (\ref{invperm}). We claim that given any feasible solution $({x'}^{*}$, $y^{*}$, $u^{*}$, $\eta^{*}$, $\nu_{i,j}^{*})$ to (\ref{invperm}), one can always retain feasibility after replacing $y^{*}$ by an alternative
$y^{**}$ that satisfies for any $i\in{\cal I}_{o}$,
\[
(y^{**})_{i}=\frac{1}{|{\cal I}_{o}|}\sum_{a\in{\cal I}_{o}}(y^{*})_{a},
\]
$o=1,...,\tau_{0}$. 

First, given the definition of ${\cal I}_{o}$, clearly for any $y^{*}$
that are ordered the same way as $Z{x'}^{*}$  (i.e., $(Z{x'}^{*})_i \leq (Z{x'}^{*})_j\Leftrightarrow y_{i}^{*}\leq y_{j}^{*}$),
it must hold also that $(Z{x'}^{*})_i\leq (Z{x'}^{*})_j\Leftrightarrow y_{i}^{**}\leq y_{j}^{**}$. It is also clear that $A^{\top}u^{*}=Z^{\top}y^{*}=Z^{\top}y^{**}$.
Finally, to check $y^{**}\in \bars{{\cal C}}$, 
one can find the necessary arguments to prove it in the proof of Proposition  \ref{lastpros}.

Hence, this implies that without loss of generality, we can impose
for some $\tilde{y}\in\mathbb{R}^{\tau_{0}}$ that for $a\in{\cal I}_{o}$, $o=1,...,\tau_{0}$,
$(y)_{a}=(\tilde{y})_{o}$ must hold. This leads to the constraints (\ref{lawperm}) and (\ref{lawperm2}) in $\tilde{y}$ and the constraint $A^{\top}u=\Xi^{\top}(\bar{p}_{o}^{\xi}M\circ\tilde{y})$
and ${\cal H}_{F_{\xi}}(\tilde{y})\in \bars{{\cal C}}$.
By setting $y=M\tilde{y}$, we arrive at the final formulation. \Halmos
\end{proof}

\subsection*{Proof of Example \ref{spec2} (5. Spectral risk measures)}
\begin{proof}{Proof of Example \ref{spec2} (5. Spectral risk measures)}
Let $M \in \mathbb{Z}^+$ be a constant such that $p_k$, $k=1,...,K$, and $\bar{p}_o$, $o=1,...,\tau_j$, can be expressed in the form of $n/M$, $n \in \{1,...,M\}$. First, given the stepwise spectrum  $\phi^{-}(p)$, to apply the representation ${\cal C}$ in Example \ref{specex} (i.e., $\bars{{\cal C}}$), we have $\phi_j = \int_{\frac{j-1}{M}}^{\frac{j}{M}}\phi^{-}(t)dt = \frac{\bar{\phi}_k}{M}$ for any $j \in \{1,...,M\}$ such that $p_{k-1} \leq \frac{j-1}{M} < \frac{j}{M} \leq p_k$, and therefore $| \left\{ j \; \middle |\; \phi_j = \frac{\bar{\phi}_k}{M} \right\} | = (p_k - p_{k-1})M$. To see how the constraint ${\cal H}_{F_j}((\lambda_{F_j})^{-1} \circ y) \in \bars{{\cal C}}$ can be reduced, we apply first the result of \cite{birkhoff:tola} to reformulate $\bars{{\cal C}}$ into 
$$\bars{{\cal C}}= \left\{ q\; \middle | q = Q\phi,\; Q\vec{1} = \vec{1},\; Q^\top \vec{1} = \vec{1},\; Q \geq 0 \right\}.$$ 

Let ${\cal I}_o^{(j)}$,  $o=1,...,\tau_j$,  denote the set of indices $n$ of ${\cal H}_{F_j}((\lambda_{F_j})^{-1} \circ y)$ such that $({\cal H}_{F_j}((\lambda_{F_j})^{-1} \circ y))_n = ((\lambda_{F_j})^{-1} \circ y)_o$ and therefore $|{\cal I}_o^{(j)}| = (\lambda_{F_j})_o$.  It is obvious that the constraint ${\cal H}_{F_j}((\lambda_{F_j})^{-1} \circ y) \in \bars{{\cal C}}$ has a feasible solution if and only if the following set of constraints 
\begin{equation} \label{cc}
q_i = q_j, \;\forall i,j \in {\cal I}_o^{(j)},\; o=1,...,\tau_j,\;q = Q\phi,\; Q\vec{1} = \vec{1},\; Q^\top \vec{1} = \vec{1},\; Q \geq 0
\end{equation}
has a feasible solution. We show first how (\ref{cc}) can be reduced. Let $(q^*,Q^*)$ denote a feasible solution for the above constraints. We claim that the solution $q^*$ together with the following construction of $Q^{**}$
$$ (Q^{**})_{(\tilde{n},:)} = \frac{1}{|{\cal I}_o^{(j)}|} \sum_{n \in {\cal I}_o^{(j)} } (Q^*)_{(n,:)},\;\tilde{n} \in {\cal I}_o^{(j)},\; o=1,...,\tau_j$$
will also be feasible. The notation $(V)_{(k,:)}$ (respectively $(V)_{(:,k)}$)
refers to the $k$th-row (respectively $k$th-column) of the matrix $V$. The claim can be fairly straightforward to verify by direct substitution, which gives $q^*=Q^{**}\phi$, $Q^{**}\vec{1}=\vec{1}$, and ${Q^{**}}^\top\vec{1}=\vec{1}$.

Hence, we can reduce (\ref{cc}) by imposing that for any $\tilde{n} \in {\cal I}_o^{(j)}$, $ o=1,...,\tau_j$, 
 $(Q)_{(\tilde{n},:)} = (\bar{Q})_{(o,:)}$ for some $\bar{Q}\in \Re^{\tau_j\times M}$, which leads to 
\begin{equation} \label{qq}
q_i = \bar{q}_o, \;\forall i\in {\cal I}_o^{(j)},\; o=1,...,\tau_j,\;
\bar{q} = \bar{Q}\phi,\; \bar{Q}\vec{1} = \vec{1},\; ((\lambda_{F_j}\vec{1}^\top) \circ \bar{Q})^\top \vec{1} = \vec{1},\; \bar{Q} \geq 0,
\end{equation}
where $\bar{q} \in \Re^{\tau_j}$. Moreover, the constraint ${\cal H}_{F_j}((\lambda_{F_j})^{-1} \circ y) \in \bars{{\cal C}}$ can be equivalently written as $y\in \Re^{\tau_j}_+ \cap {\cal C}$, where
$$ {\cal C} := \left \{ \bar{q} \; \middle |\; \bar{q} = ((\lambda_{F_j}\vec{1}^\top)\circ\bar{Q})\phi, \;\bar{Q}\vec{1} = \vec{1}, \; ((\lambda_{F_j}\vec{1}^\top) \circ \bar{Q})^\top \vec{1} = \vec{1}, \;\bar{Q}\geq 0\right\}.$$ 

Letting $\hat{Q} = (\lambda_{F_j}\vec{1}^\top) \circ \bar{Q}$, we have 
\begin{equation}
 {\cal C} = \left\{ \bar{q} \; \middle |\; \bar{q} = \hat{Q}\phi,\; \hat{Q}\vec{1} = \lambda_{F_j},\; \hat{Q}^\top \vec{1} = \vec{1},\; \hat{Q}\geq 0\right\}. \label{qq2}
\end{equation}

Next, let ${\cal I}_k^{(\phi)}$ denote the set of indices $j$ of $\phi$ such that $\phi_j =  \frac{\bar{\phi}_k}{M}$ for $k=1,...,K$  and therefore $|{\cal I}_k^{(\phi)}| = (p_k-p_{k-1})M$. Given this, we show that the constraints in (\ref{qq2}) can be further reduced. Let $\bar{q}^*$, $\hat{Q}^*$ be a feasible solution for (\ref{qq2}). We claim that $\bar{q}^*$ together with the following construction of $\hat{Q}^{**}$ 
$$ \hat{Q}^{**}_{(:, \tilde{n})} : = \frac{1}{|{\cal I}_k^{(\phi)}|} \sum_{n \in {\cal I}_k^{(\phi)}} \hat{Q}^*_{(:,n)},\; \tilde{n} \in {\cal I}_k^{(\phi)},\; k=1,...,K
$$
is also feasible for the constraints. Similarly, the claim can be verified by the direct substitution, which gives
$ \hat{Q}^{**}\phi = \bar{q}^*$, $\hat{Q}^{**}\vec{1} = \lambda_{F_j}$, and $
(\hat{Q}^{**})^\top \vec{1} = \vec{1}. $

Hence, we can also impose that for any $\tilde{n} \in {\cal I}_k^{(\phi)}$, $k=1,..,K$, $\hat{Q}(:,\tilde{n}) = \tilde{Q}(:,k)$ for some $\tilde{Q} \in \Re^{\tau_j \times K}$ in (\ref{qq2}), which leads to the reformulation of the first and second constraint in ${\cal C}$ into 
$$\bar{q} = ((\vec{1} \lambda_{\phi}^\top)\circ \tilde{Q})(\frac{1}{M}\bar{\phi}),\;
\text{and } ((\vec{1}\lambda_{\phi}^\top)\circ \tilde{Q})\vec{1} = \lambda_{F_j},$$
where $(\lambda_\phi)_k := (p_k-p_{k-1})M$, and therefore also the set ${\cal C}$ into
$${\cal C} = \left\{ \bar{q} \; \middle |\; \bar{q} =  (\frac{1}{M})(\vec{1}\lambda_{\phi}^\top)\circ \tilde{Q}  \bar{\phi},\; ((\vec{1}\lambda_{\phi}^\top)\circ \tilde{Q})\vec{1} = \lambda_{F_j},\; \tilde{Q}^\top \vec{1} = \vec{1},\; \tilde{Q}\geq 0 \right\}.$$
Letting $\dot{Q} = (\frac{1}{M})(\vec{1}\lambda_{\phi}^\top)\circ \tilde{Q}$, we arrive at the final reduced form. \halmos
\end{proof}

\newpage
\section{Further discussion about the issue of constraint misspecification raised in Remark \ref{rm4}} \label{apee}
As mentioned in Remark \ref{rm4}, it is possible that the forward problem assumed in our inverse models does not well represent the true problem that the decision maker solved. 
In this section, we attempt to discuss this issue more formally and provide some ideas that might help (partially) resolve the issue.

The forward problem in this paper is completely characterized by the feasible sets ${\cal X}^t$, $t\in {\cal T}$, and from this point on we assume that the sets take the form of ${\cal X}^t := \left\{ x\; \middle | \; g^t_j(x)\leq 0,\; j=1,...,J^t \right\}$, $t \in {\cal T}$. To differentiate the feasible sets ${\cal X}^t$ assumed in our inverse models from the ``true" feasible sets  (i.e., the ones based on which the past decisions $x^t$, $ t\in {\cal T}$ were optimized), we denote by ${\bar {\cal X}}^t$, $t\in {\cal T}$ the true feasible sets and thus $x^t \in {\bar{\cal X}}^t$, $t \in {\cal T}$ follows. To facilitate our discussion,  we assume that the decision makers are rational
\footnote{This assumption is in fact necessary; otherwise in the case where there does not exist a risk function $\rho$ that fits all the observed decisions, one cannot tell if it is because of the sub-optimality of the observed decisions or the misspecification of constraints.}
  (i.e., the past decisions were optimally made with respect to the true risk function $\rho^*$). We can thus write down the following optimality condition that must hold for the observed decisions $x^t$, $t \in {\cal T}$ with respect to the true risk function $\rho^*$:
\begin{equation} \label{1eq}
\rho^*(\vec{Z}^t(x^t))\leq \rho^*(\vec{Z}^t(x)),\; \forall x \in {\bar{\cal X}}^t, \;t \in {\cal T}. 
\end{equation}

Recall that our inverse models take as input the observed decisions $x^t$, $t\in {\cal T}$ and the feasible sets ${\cal X}^t$, $t\in {\cal T}$ and that they seek a risk function $\rho$ from the following set:  
\begin{equation} \label{2eq}
{\cal R}_{inv} = \left\{ \rho \;\middle |\; 
\rho(\vec{Z}^t(x^t))\leq \rho(\vec{Z}^t(x)),\;\forall x \in {\cal X}^t, \;t \in {\cal T} \right\}.
\end{equation}
The question here is how the discrepancy between ${\cal X}^t$ and $\bar{{\cal X}}^t$ would affect the risk function $\rho$ generated from the inverse models and if the models can actually detect such a discrepancy. A trivial case where we can easily draw the conclusion, without even running the inverse models, is that if there are some observed decisions that are simply not feasible with respect to ${\cal X}^t$  (i.e., $x^t \not\in {\cal X}^t$), then one can easily tell which constraint is misspecified by checking which one is violated  (i.e., $g^t_j(x^t) > 0$ for some $j \in \{1,...,J^t\}$ and $t \in {\cal T}$). 

We thus focus primarily on the case where all the observed decisions are feasible with respect to ${\cal X}^t$  (i.e., $x^t \in {\cal X}^t$, $t \in {\cal T}$) but the feasible sets ${\cal X}^t$ may be misspecified  (i.e., ${\cal X}^t \neq {\bar {\cal X}}^t$). The inverse models in this case could either return a message of infeasibility  (i.e., ${\cal R}_{inv} = \emptyset$) or a feasible risk function $\rho$ that can fit all the observed decisions with respect to ${\cal X}^t$. The case of infeasibility would allow one to detect the sets ${\cal X}^t$ being misspecified, because we know from \eqref{1eq} and \eqref{2eq} that if the sets ${\cal X}^t$ are correctly specified  (i.e., ${\cal X}^t = {\bar {\cal X}}^t$), then $\rho^* \in {\cal R}_{inv}$  (i.e., ${\cal R}_{inv} \neq \emptyset$). In the case where the inverse models do return a feasible risk function $\rho$, it appears, however, as discussed below, generally not possible to detect any misspecification of ${\cal X}^t$.

In particular, let us consider the case motivated by our portfolio management example where the client might not necessarily take into account the long-only constraint. In this case, the feasible set ${\cal X}^t$ is only a subset of the true feasible set ${\bar {\cal X}}^t$  (i.e., ${\cal X}^t \subset {\bar{\cal X}}^t$), and we know from \eqref{1eq} that the following must hold:
\begin{equation} \label{insight}
  \rho^*(\vec{Z}^t (x^t)) \leq \rho^*(\vec{Z}^t(x)), \;\forall x \in {\cal X}^t, \;t\in {\cal T}.
\end{equation}
That is, we have $\rho^* \in {\cal R}_{inv}$. In other words, despite ${\cal X}^t \neq {\bar {\cal X}}^t$, in this case the inverse models can still narrow down a smaller set of risk functions that includes the true risk function $\rho^*$ as a feasible candidate. This also demonstrates why one would not be able to detect the discrepancy between ${\cal X}^t$ and ${\bar {\cal X}}^t$, given that it is not even  possible to rule out $\rho^*$ from the set ${\cal R}_{inv}$. The only difference in this case between applying the set ${\cal X}^t$ and the true feasible set ${\bar {\cal X}}^t$ in \eqref{2eq} is that the latter might allow for a faster rate of convergence of the set ${\cal R}_{inv}$ to the true risk function $\rho^*$. But it appears not possible to detect such a difference without knowing what the true risk function is. In other cases where ${\cal X}^t \neq {\bar {\cal X}}^t$ but ${\cal X}^t  \not\subset {\bar{\cal X}}^t$, it is possible that the true risk function $\rho^*$ may no longer be feasible  (i.e., $\rho^* \notin {\cal R}_{inv}$). It remains unclear, however, how one can detect such infeasibility, given that all one knows from the output of the inverse models is that there exists a risk function $\rho$ that perfectly fits all the observed decisions.

Although in general it appears not possible to detect $\rho^* \notin {\cal R}_{inv}$ (and thus also ${\cal X}^t \neq {\bar {\cal X}}^t$), we provide here some idea as to how one may quantify potential risk underestimation because of constraint misspecification. The idea here can also be applied to the case where one is able to confirm $\rho^* \notin {\cal R}_{inv}$  (e.g., the case ${\cal R}_{inv} = \emptyset$ as mentioned earlier) and tries to fix the issue. This idea comes from the observation made earlier that if ${\cal X}^t \subset {\bar{\cal X}}^t$, our inverse models would necessarily still include the true risk function $\rho^{*}$ as a feasible candidate. So, given the sets of potentially misspecified feasible sets ${\cal X}^t$, $t \in {\cal T}$ and observed decisions $x^t$, $t\in {\cal T}$, what one can do is to construct alternative feasible sets ${\cal X}^t_{*}$ that satisfy 
\begin{equation} \label{inclu}
x^t \in {\cal X}^t_{*} \subset { {\cal X}^t \cap \bar {\cal X}}^t, \;t \in {\cal T},
\end{equation}
(i.e., each ${\cal X}^t_*$ is a subset of both the potentially misspecified set and the true feasible set). By replacing the sets ${\cal X}^t$, $t\in {\cal T}$ with the subsets ${\cal X}^t_*$, $t \in {\cal T}$ in \eqref{2eq}, one can apply instead the following set in the inverse models:
\begin{equation} \label{3eq}
{\cal R}_{inv}^* = \left\{ \rho \;\middle |\; 
\rho(\vec{Z}^t(x^t))\leq \rho(\vec{Z}^t(x)),\;\forall x \in {\cal X}^t_*, \;t \in {\cal T} \right\}.
\end{equation}
The condition  ${\cal X}^t_{*}\subset {\cal X}^t$ implies that ${\cal R}_{inv} \subset {\cal R}_{inv}^*$, and the condition ${\cal X}^t_{*} \subset \bar{{\cal X}}^t$ implies that the above set always contains the true risk function $\rho^*$  (i.e., $\rho^* \in {\cal R}_{inv}^*$).  
With these guarantees, one can apply our inverse model \eqref{eq:inv3} to generate a risk function $\rho^{\uparrow}$ that bounds from above the true risk function $\rho^*$  (i.e., $\rho^{\uparrow} \geq \rho^*$) and any risk function generated from the original set ${\cal R}_{inv}$  (i.e., $\rho^{\uparrow} \geq \rho$,  $\forall \rho \in {\cal R}_{inv}$). Hence, $\rho^{\uparrow}$ provides a means to measure potential risk underestimation from constraint misspecification. More specifically, one may compare the risk estimates obtained from a risk function that is generated with respect to potentially misspecified constraints against $\rho^{\uparrow}$. If the difference is small, this may be regarded as a signal that the impact of constraint misspecification is less of a concern. Otherwise, the difference provides the upper bound on the potential risk underestimation from constraint misspecification. Ideally, one should seek the largest $
{\cal X}^t_{*}$ that satisfies \eqref{inclu}, given that the larger the constructed set 
${\cal X}^t_{*}$ is, the tighter the bound $\rho^{\uparrow}$ is (and hence the more informative the bound is).

Here, we give two examples of how the set ${\cal X}_t^*$ may be constructed. In the first one, we assume the form of the constraints $g_j^t(x)$ is known but it is only some of the ``right-hand-side" parameters that are prone to misspecification: $g_j^t(x) := h_j^t (x) - b_j^t \leq 0,\; j=1,...,J^t$, where $h_j^t(x)$ is known but $b_j^t$ may be misspecified for some $j$ and $t$. For example, it is common also in portfolio management that upper bounds must be imposed over the amount of investment for each asset  (i.e., $g_j^t(x) = x_j - b_j^t \leq 0$, $ j=1,...,{\rm dim}(x),\; t \in {\cal T}$) for diversification purposes,  but the values of the bounds may not be known exactly. To construct the set ${\cal X}_t^*$ in this case, one can lower the values of the parameters $b_j^t $ that are prone to misspecification, so that they guarantee $x^t \in {\cal X}^t_{*} \subset {\bar {\cal X}}^t$ (e.g., by setting $b_j^t := h_j^t (x^t)$). In the second example, we assume that the true feasible set $\bar{{\cal X}}^t$ does not change over time  (i.e.,  $\bar{{\cal X}}^t = \bar{{\cal X}}$, $\forall t \in {\cal T}$). One can then always construct the convex hull of the observed decisions ${\cal X}_{*} := {\rm Conv}(\{x^t\}_{t\in {\cal T}})$ so that ${\cal X}_* \subset {\bar {\cal X}}$. These examples are meant for demonstrating the general principle that one may apply for constructing the set ${\cal X}_t^*$.

\end{document}